\documentclass[ 11pt]{amsart}

        \newcommand{\OO}{{\mathscr{O}}}

                \newcommand{\Y}{{\mathscr{Y}}}

\usepackage{amsmath}
\usepackage{amscd,amsthm,amssymb,amsfonts}
\usepackage{mathrsfs}
\usepackage{dsfont}
\usepackage{stmaryrd}
\usepackage{euscript}
\usepackage{expdlist}
\usepackage{enumerate}

%\usepackage{enumitem}
%\usepackage[mathscr]{eucal}
% either use \mathscr (no euscript package), or use \EuScript (no eucal
% package), they are the same font
\input xy
\xyoption{all}
\setlength{\topmargin}{-0.4in}
\setlength{\headheight}{8pt} \setlength{\textheight}{9in}
\setlength{\oddsidemargin}{-0.15in}
\setlength{\evensidemargin}{-0.15in} \setlength{\textwidth}{6.6in}
\usepackage[OT2,T1]{fontenc}

% newcommands for thesis
%\renewcommand{\thesubsubsection}{\thesubsubsection}
%\newcommand{\mysubsection}[1]{\subsection[#1]{\normalfont\itshape #1}}
%\def\subsubsection{\@startsection{subsubsection}{3}{\z@}{-3.25ex plus -1ex minus -.2ex}{1.5ex plus .2ex}{\normalsize\itshape}}
%\newcommand{\mysubsubsection}{\subsubsection}
% theorems, lemmas,..
\theoremstyle{plain}
\newtheorem{thm}{Theorem}[section]

\newtheorem*{thm*}{Theorem}

\newtheorem{lm}[thm]{Lemma}

\newtheorem*{cor*}{Corollary}

\newtheorem*{conj*}{Conjecture}
\newtheorem{conj}{Conjecture}

%proposal

 % Used in Introduction
 % Used in Introduction

\theoremstyle{remark}
\newtheorem*{remark}{Remark}
 % Used in Introduction

\theoremstyle{definition}
 % Used in Introduction
\newtheorem*{defn*}{Definition}

%for temporary use
\newcommand{\nc}{\newcommand}

%Convenient command
\newcommand{\beq}{\begin{equation}}
\newcommand{\eeq}{\end{equation}}
\newcommand{\bpmx}{\begin{pmatrix}}
\newcommand{\epmx}{\end{pmatrix}}
\newcommand{\bbmx}{\begin{bmatrix}}
\newcommand{\ebmx}{\end{bmatrix}}

\newcommand{\beqcd}[1]{\begin{equation*}\label{#1}\tag{#1}}
\newcommand{\eeqcd}{\end{equation*}}

\numberwithin{equation}{section}

% references

\def\makeop#1{\expandafter\def\csname#1\endcsname
  {\mathop{\rm #1}\nolimits}\ignorespaces}
%\def\thmref#1{Theorem\hyperref[#1]{\ref{#1}}}

%Reserved Key words
\makeop{Hom}   \makeop{End}   \makeop{Aut}   %\makeop{Isom}
\makeop{Pic} \makeop{Gal}       \makeop{Div} \makeop{Lie}
\makeop{PGL}   \makeop{Corr} \makeop{PSL} \makeop{sgn} \makeop{Spf}
 \makeop{Tr} \makeop{Nr} \makeop{Fr} \makeop{disc}
\makeop{Proj} \makeop{supp} \makeop{ker}   \makeop{Im} \makeop{dom}
\makeop{coker} \makeop{Stab} \makeop{SO} \makeop{SL} \makeop{SL}
\makeop{Cl}    \makeop{cond} \makeop{Br} \makeop{inv} \makeop{rank}
\makeop{id}    \makeop{Fil} \makeop{Frac}  \makeop{GL} \makeop{SU}
\makeop{Trd}   \makeop{Sp} \makeop{Tr}    \makeop{Trd} \makeop{Res}
\makeop{ind} \makeop{depth} \makeop{Tr} \makeop{st} \makeop{Ad}
\makeop{Int} \makeop{tr}    \makeop{Sym} \makeop{can} \makeop{SO}
\makeop{torsion} \makeop{GSp} \makeop{Tor}\makeop{Ker} \makeop{rec}
\makeop{Ind} \makeop{Coker}
 \makeop{vol} \makeop{Ext} \makeop{gr} \makeop{ad}
 \makeop{Gr}\makeop{corank} \makeop{Ann}
\makeop{Hol} %Holomorphic
\makeop{Fitt} \makeop{Mp} \makeop{CAP}
%\makeop{Sel}

%\def\Ord{{\operatorname{ord}}}

%\DeclareMathOperator{\Spec}{Spec}
\def\Spec{\mathrm{Spec}\,}

\DeclareMathAlphabet{\mathpzc}{OT1}{pzc}{m}{it}
\DeclareSymbolFont{cyrletters}{OT2}{wncyr}{m}{n}
\DeclareMathSymbol{\SHA}{\mathalpha}{cyrletters}{"58}

\def\makebb#1{\expandafter\def
  \csname bb#1\endcsname{{\mathbb{#1}}}\ignorespaces}
\def\makebf#1{\expandafter\def\csname bf#1\endcsname{{\bf
      #1}}\ignorespaces}
\def\makegr#1{\expandafter\def
  \csname gr#1\endcsname{{\mathfrak{#1}}}\ignorespaces}
\def\makescr#1{\expandafter\def
  \csname scr#1\endcsname{{\EuScript{#1}}}\ignorespaces}
\def\makecal#1{\expandafter\def\csname cal#1\endcsname{{\mathcal
      #1}}\ignorespaces}
% \cal is used in article, \mathcal is used in amsart

\def\doLetters#1{#1A #1B #1C #1D #1E #1F #1G #1H #1I #1J #1K #1L #1M
                 #1N #1O #1P #1Q #1R #1S #1T #1U #1V #1W #1X #1Y #1Z}
\def\doletters#1{#1a #1b #1c #1d #1e #1f #1g #1h #1i #1j #1k #1l #1m
                 #1n #1o #1p #1q #1r #1s #1t #1u #1v #1w #1x #1y #1z}
\doLetters\makebb   \doLetters\makecal  \doLetters\makebf
\doLetters\makescr
\doletters\makebf   \doLetters\makegr   \doletters\makegr

\normalsize

\makeop{Ram} \makeop{Rep} \makeop{mass}

\makeop{Bl}

\def\Zp{\Z_p}

%*************FONTS*****************

%basic notation

% mathcal
  %automorphic forms

  %Hida family

  %imaginary quadratic field

% EuScript

  %Hida family

%\mathbf for U(n,n)

%boldsymbol

% mathscr

% mathbb

 %p-adic modular forms

\newcommand{\Z}{\mathbf Z}
\newcommand{\Q}{\mathbf Q}

\newcommand{\C}{\mathbf C}
\newcommand{\A}{\mathbf A}    % for adele

% mathfrak

\def\frakp{{\mathfrak p}}

%\mathpzc

% one

\def\one{\mathbf 1} %trivial character

% underline

%widetilde

%widehat

%*********bar

%******************French words

% \cal, \frak, \mathbf.. check Math Sci.
% \mathcal, \mathfrak, \EuScript (euscript), \mathbb

% better notation

   %\< is not defined yet.
 %\> is already defined.

  %\11 can't be used

  \nc{\opp}{\mathrm{opp}} \nc{\ul}{\underline}

%\nc{\embed}{\hookrightarrow}

%*************************Matrices

% *** ENGLISH ABBREVIATIONS ****************************************

% Some words

% characteristic function of a set

%\newcommand{\ch}{characteristic }

%Local fields
 %uniformizer
 %adelic absolute value
%\providecommand\abs[1]{\lvert{#1}\rvert}

% Control symbols

% abbreviation
  %%CM type

\def\ndivide{\nmid}

 % episilon factor

\def\iso{\simeq}

\def\lam{\lambda}
 % automotphic induction

%Symbol for measures

% ideles and adeles

\usepackage[utf8x]{inputenc}

\usepackage{amsfonts}

\usepackage{amsthm}
\usepackage{amsmath,amscd}
\usepackage{amssymb}
\usepackage{graphicx}
%
%\usepackage[small,nohug,heads=vee]{diagrams}
%\diagramstyle[labelstyle=\scriptstyle]c

\setcounter{tocdepth}{2} \setcounter{secnumdepth}{2}
\title[On the non-vanishing of $p$-adic heights]{On the non-vanishing of $p$-adic heights on CM abelian varieties, and the arithmetic of Katz $p$-adic $L$-functions}
%On the non-vanishing of $p$-adic heights for certain CM abelian varieties}]{On the non-vanishing of $p$-adic heights\\for certain CM abelian varieties

%(Preliminary version)}
\author{Ashay A. Burungale and Daniel Disegni}
%\date{\today}
%\address{ Department of Mathematics~\\UCLA ~ \\
%Los Angeles, CA 90095-1555, USA
%}
\email{ashayburungale@gmail.com, daniel.disegni@gmail.com}
%\subjclass[2010]{Primary 19F27, 11G18, 11R23 Secondary 11F85}
%\subjclass[2010]{?}

\def\skewhf{\vartheta}

\newcommand{\baar}{\overline}

%\newcommand{\eps}{\varepsilon}
%\date{\today}

\newtheorem{rema}[thm]{Remark}

\newcommand{\llb}{\llbracket}
\newcommand{\rrb}{\rrbracket}

%
%\newtheorem*{coro*}{Corollary}
%\newtheorem*{conj*}{Conjecture}
%\newtheorem{lemma}[thm]{Lemma}

%opening
\begin{document}
\maketitle

\begin{abstract} 
%%Let $A$ be an abelian variety over a number field $F$.  The N\'eron--Tate height pairing on the product of $A$ and its dual has a $p$-adic analogue, which is conjectured to be non-degenerate. 
Let $B$ be a simple CM abelian variety over a CM field $E$, $p$ a rational prime. Suppose that $B$ has potentially ordinary reduction above $p$ and is self-dual with root number $-1$. Under some further conditions, 
we prove the generic non-vanishing of (cyclotomic) $p$-adic heights on $B$  along   anticyclotomic $\Z_{p}$-extensions of $E$. 
This provides evidence towards Schneider's conjecture on the non-vanishing of $p$-adic heights. 
For CM elliptic curves over $\Q$, the result was previously known as a consequence of work of Bertrand, Gross--Zagier and Rohrlich in the 1980s. Our proof is based on non-vanishing results for  Katz $p$-adic $L$-functions and a Gross--Zagier formula relating the latter to families of  rational points on $B$. 
%We also prove   a   formula of similar flavour for the values outside the range of interpolation, which for imaginary quadratic fields is due  to 
% and Bertolini--Darmon--Prasanna.

%This generalises a result of Rubin for CM elliptic curves over $\Q$, whose proof used a previous nonvanishing result of Bertrand together with the generic non-vanishing of Heegner points proved by Cornut--Vatsal. Our proof is different, relying instead on non-vanishing  results for $p$-adic $L$-functions of the first author and the $p$-adic Gross--Zagier formula of the second author. 

%Let $\mathfrak{p}$ be a prime above $p$. 
%Let $E/F$ be a CM quadratic extension such that each prime of $F$ above $p$ splits in $E$. 
%Let $\bf{B}$ be an incoherent quaternion algebra over $F$ with an embedding of 
%adelic torus arising from $E$.
%Let $A$ be a $p$-ordinary simple CM abelian variety admitting modular parametrisation by the Shimura curve arising from $\bf{B}$.
%Heegner points provide a natural supply of rational points on $A$ over the 
%$\Z_p$-anticyclotomic extension of $E$. 
%We prove the generic non-vanishing of the $p$-adic height of Heegner points over the maximal anticyclotomic subextension unramified outside $\mathfrak{p}$. 
%This provides an evidence for Schneider's conjecture on the non-vanishing of the $p$-adic height of non-torsion points on abelian varieties over number fields.\\
\end{abstract}
\tableofcontents
\section{Introduction and statements of the main results} 
%\noindent  % i think indentations look good...

Let $B$ be an abelian variety over a number field $E$, and let $B^{\vee}$ be its dual. Let $p$ be a prime and let $L$ be a finite extension of $\Q_{p}$.
The N\'eron--Tate height pairing on $B(E')_{\Q}\times B^{\vee}(E')_{\Q}$, where $E'$ is a finite extension of $E$, admits a $p$-adic analogue (see e.g. \cite{Nht})
\begin{align}\label{HP}
B(E')_{\Q}\times B^{\vee}(E')_{\Q}\to L
\end{align}
depending   on the choices of a  homomorphism (``$p$-adic logarithm'')  $\ell\colon E^{\times}\backslash E_{\A^{\infty}}^{\times}\to L$ and on splittings of the Hodge filtration on $H^{1}_{\rm dR}(B/E_{v})$ at the primes $v\vert p$; in the potentially ordinary case under consideration in this paper, there are canonical  choices (the ``unit root'' subspaces) for the Hodge splittings. While it is a classical result that the N\'eron--Tate height pairing is non-degenerate, the pairing \eqref{HP} can vanish for some choices of $\ell$. Suppose however that $\ell=\ell_{\Q}\circ N_{E/\Q}$ with $\ell_{\Q}$ a $p$-adic logarithm of $\Q$ such that $\ell_{\Q}|_{1+p\Z_{p}}
$ is nontrivial
%213 =\log_{p}$, where $\log_{p}(1+x)=1-x+x^{2}/2-x^{3}/3+\ldots$ 
(we call such $\ell$ a \emph{cyclotomic} logarithm).
Then it is conjectured \cite{Sc2} that \eqref{HP} is  non-vanishing.
%, i.e. that for any non-torsion point $P\in B(E')$, the linear form $\langle P, \cdot\rangle$ on $B^{\vee}(E')_{\Q}$ is nonzero. 
 This  long-standing  conjecture is only known in a few special cases: for CM elliptic curves, thanks to Bertrand \cite{Be}, and also  for elliptic curves over $\Q$ at supersingular primes.\footnote{This was observed in \cite{BPR}; see \cite[\S 4.5]{kobayashi}  for the comparison between the  definition  of the height pairing used in \cite{BPR} and the `standard' definition of Zarhin and Nekov{\'a}{\v{r} } \cite{Nht}.}
 %fixme or is kobayashi's result proved by someone else before?
The stronger statement that \eqref{HP} is non-degenerate is also conjectured to be true; this is not known in any cases of rank higher than~$1$. The non-degeneracy conjecture has arithmetic consequences: it allows to formally deduce the $p$-adic Birch and Swinnerton-Dyer conjecture from the Iwasawa main conjecture for $B$.\footnote{See \cite{Sc2}. For further applications to the \emph{classical} Birch and Swinnerton-Dyer formula, see \cite{PR, kobayashi, Di1}.}

%(if the $p$-adic $L$-function of $B$ exists).
%in our case, as well as in all other  cases where the non-degeneracy has been established, we will be in a situation of rank one -- or, trivially, zero --, when non-vanishing and non-degeneracy are equivalent.) 

The pairing \eqref{HP} is  equivariant for the actions of $\Gal(E'/E)$ and of $K:=\End^{0}(B)$ on both sides; for our purposes we can then assume that $B$ is simple,  that the coefficient field $L$ is sufficiently large and has the structure of  a $K\otimes \Q_{p}$-algebra,  and then decompose the pairing  into isotypic components
\begin{align}\label{HPchi}
B(\chi) \otimes_{L} B^{\vee}(\chi^{-1})\to L
\end{align}
for the $\Gal(E'/E)$-action. Here and in the rest of the paper, if  $R$ is a  $K\otimes \Q_{p}$-algebra and  $\chi$ is an $R^{\times}$-valued   character of $\Gal({E}^{\rm ab}/E)$, we define 
$$B(\chi):=(B({E}^{\rm ab})\otimes_{K\otimes\Q_{p}}  R(\chi))^{\Gal({E}^{\rm ab}/E)}, $$ 
where $R({\chi})$ is a rank-$1$ $R$-module with Galois action by $\chi$.
% the space $B^{\vee}(\chi^{-1})$ is defined similarly.

\medskip 

The most significant result of this  paper is the proof that, under some assumptions,  the non-vanishing conjecture  for \eqref{HPchi} is true ``generically'' when $B$ is a  $p$-ordinary CM abelian variety over a CM field $E$ and $\chi$  varies  among  anticyclotomic characters of   $E$  unramified outside $p$. 

The method of proof, different  from that of previous results on this topic, is  automorphic.  (In particular, the approach does not involve transcendence arguments.)
It combines two ingredients. The first is a pair of nonvanishing results for Katz $p$-adic $L$-functions due to Hida, Hsieh, and the first author (in turn  relying on  Chai's results  on  Hecke-stable subvarieties of a mod $p$ Shimura variety \cite{Ch1, Ch2, Ch3}).  The second ingredient is a  Gross--Zagier  formula  relating  derivatives of Katz $p$-adic $L$-functions to families of rational points  on CM abelian varieties. We deduce this formula from work of 
the second author, by an argument employed by Bertolini--Darmon--Prasanna \cite{BDP} in a different context.

In the rest of this section we describe the main results in  more detail.

\subsection{Non--vanishing of $p$-adic heights} 
%To state more precisely  our main result, l
Let $B/E$ be a simple CM abelian variety over a CM field $E$, i.e. $K:=\End^{0}(B)$ is a CM field of degree $[K:\Q]=2\dim B$. Let $F$ be the maximal totally real subfield of $E$,  and let $\eta=\eta_{E/F}$ be the associated character of $F^{\times}\backslash F_{\A}^{\times}$.

\subsubsection*{Assumptions} The abelian variety  $B$ is associated with a Hecke character 
$$\lam=(\lambda^{\tau})_{(\tau\colon K \hookrightarrow \C)}\colon E^{\times}\backslash E_{\A}^{\times}\to (K\otimes \C)^{\times} $$ (\cite[\S 19]{Sh3}). 
Suppose that $\lambda$ satisfies the  condition
\begin{align}\label{cc}
\lambda|_{F_{\A}^{\times}} =\eta   |\cdot|_{\A}^{-1};
\end{align}
this holds in particular whenever $B$  arises as the base-change of a real-multiplication abelian variety $A/F$ \cite[Theorem 20.15]{Sh3}.  It implies that the 
% unitarisation $\lam^{\rm u}:=(\lam^{\tau}/|\lam^{\tau}|)_{\tau}$ is self-dual, i.e., 
 for each $\tau$ there is a functional equation with sign 
% $w(\lam):=\varepsilon(0,\lambda^{\tau})\in\{\pm 1\}$
 $w(\lam):=\varepsilon(1, \lambda^{\tau})\in\{\pm 1\}$
  (independent of $\tau$) relating $L(s,\lam^{\tau})$ to $L(2-s, \lam^{\tau})$. 
 %because L(2-s, \lam)=L(2-s, \lam^{*})=L(1-s, \lam^{-1})
 We will assume that 
\begin{align}\label{root num}
%\varepsilon(1/2,\lambda^{\rm u})=-1.
w(\lam)=-1.
\end{align}

Finally,
%, letting $M$ denote the maximal totally real subfield of $K$, 
we assume that 
\begin{align}\label{ord}
\text{$B$ has potentially \emph{ordinary} reduction at every prime of $E$ above $p$.}
\end{align}
%note: the follwowing condition is implied by previous one, see lemma below
%\begin{align}\label{ord}
%\text{every prime of $F$ above $p$ splits in $E$.}
%\end{align}
%fixme: is this the correct ordinarity condition? (I think so)

%DD: relation between E and K: let $(K,\Sig_{K})$ be the CM type of $K$. Then the condition that B is defined over $E$ automatically implies that the reflex field K* is contained in E. 

%Suppose that $K:=\End^{0}(A_{\overline{F}})$ is a CM field (of degree $2d$). Then in fact the endomorphisms by $K$ are defined over a CM quadratic extension $E$ of $F$ (see Lemma \ref{def-over-E} below).

%
%Suppose that
%
%\
%\\
%{$(ord)$}  \text{Every prime of $M$ above $p$ splits in $K$.}\\
%\\
%The condition $(ord)$ guarantees the 
%existence of a $p$-adic CM type $\Sigma$ of $K$, i.e. of a subset $\Sigma $ of ${\rm Hom}(K, \baar{\Q}_{p})$ such that ${\rm Hom}(K, \baar{\Q}_{p})=\Sigma \coprod \Sigma c$ for $c$ the nontrivial element in $\Gal(K/M)$. We fix such a CM type and we also identify it with the 
%set of infinite places of $K$. %fixme: say more about how to do this. is it necessary?
%Moreover the analogue of condition $(ord)$ is satisfied for $E/F$, %fixme check whether this is really true; define a condition (ord ) in general and say ...
%and $A$ has   ordinary reduction at all primes  of $F$ above $p$; assume further  that the reduction of $A$ is good there.

\subsubsection*{Anticyclotomic regulators} Let $E_{\infty}^{-}$ (respectively  $E_{\infty}^{+}$) be the anticyclotomic $\Zp^{[F:\Q]}$-extension (respectively cyclotomic $\Zp$-extension) of $E$ and, for a prime $\wp $ of $F$ above $p$, let
$E_{\wp,\infty}^{-} \subset E_{\infty}^{-}$ 
be the $\wp$-anticyclotomic subextension, i.e. the maximal subextension unramified outside the primes above $\wp$ in $E$; finally, let  $E_{\infty}:=E^{-}_{\infty}E^{+}_{\infty}$ and $E_{{\wp},\infty}=E_{\wp,\infty}^{-}E_{\infty}^{+}$. 
If $\bullet$ is any combination of subscripts $\emptyset$, $\wp$ and superscripts $\emptyset$, $+$, $-$ (we convene that `$\emptyset$'  denotes no symbol),  the corresponding infinite Galois group is 
$$\Gamma_{\bullet}:=\Gal(E_{\bullet, \infty}/E).$$

%
%[[]]
%
%\beq\lb{Gammas} \Gamma^{\pm}:=\Gal(E^{\pm}_{\infty}/E), \qquad \Gamma_{\wp}^{-}=\Gal(E_{\wp,\infty}^{-}/E), \qquad
%\Gamma_{\wp}=\Gal(E_{\wp,\infty}/E).\eeq

Let $L$ be a
% sufficiently large
 finite extension of a $p$-adic completion $K_{w}$ of $K$.
%  and let $\mathscr{Y}$,  $\mathscr{Y}^{-}$,  $\mathscr{Y}_{\wp}$, $\mathscr{Y}^{-}_{\wp}$  be the rigid spaces over $L$ whose $A$-points, for any affinoid $L$-algebra $A$, are the continuous $A^{\times}$-valued characters of $\Gamma$, $\Gamma^{-}$, $\Gamma_{\wp}$,  and $\Gamma_{\wp}^{-}$, respectively.
If $\bullet$ is any set of sub- and superscripts as above,
we let
$$\Lambda_{\bullet}:= \mathscr{O}_{L}\llbracket \Gamma_{\bullet}\rrbracket\otimes L, \qquad \Y_{\bullet}:=\Spec \Lambda_{\bullet}.$$
When we want to emphasise the role of a specific choice of $L$, we will write $\Lambda_{\bullet, L}$, $\Y_{\bullet, L}$.

%$\Lambda_{\wp}:=\mathscr{O}_{L}\llbracket \Gamma_{\wp}\rrbracket\otimes L$, $\Lambda^{-}:=  \mathscr{O}_{L}\llbracket \Gamma^{-}\rrbracket \otimes L$,  $\Lambda_{\wp}^{-}:=  \mathscr{O}_{L}\llbracket \Gamma^{-}_{\wp}\rrbracket\otimes L$, 
For $\circ=\wp, \emptyset$ we let 
$$\chi_{{\rm univ}, \circ}\colon \Gamma_{\circ}^{-}\to \Lambda_{\circ}^{-, \times}$$
 be the tautological anticyclotomic  character. % and $\chi_{{\rm univ}, \wp}\colon \Gamma^{-}_{\wp}\to (\Lambda_{\wp}^{-})^{\times} $ be the tautological characters. 
%
%
%%
%%Let 
%%$$\chi_{0}\colon \Gal(E^{\rm ab}/E)\to L^{\times}$$
%%be a finite order character satisfying 
%%\begin{align}\label{chi0}
%%\chi_{0}|_{\Gal(F^{\rm ab}/F)}=\omega_{A}^{-1},
%%\end{align}
%% where $\omega_{A}\colon \Gal(F^{\rm ab}/F)\to M^{\times} $ is the finite order character giving the Galois action on $(\det T_{p}A)(-1)$. Then we define 
%%$$A(\chi_{0}\chi_{\rm univ}):= (A(E^{\rm ab})_{\Q}\otimes_{M} \Lambda(\chi_{0}\chi_{\rm univ}))^{\Gal(E^{\rm ab}/E)}$$
%%and similarly 
%%$A(\chi_{0}\chi_{{\rm univ}, \frakp})$, $A^{\vee}(\chi_{0}^{-1}\chi_{{\rm univ}}^{-1})$, $A^{\vee}(\chi_{0}^{-1}\chi_{{\rm univ}, \frakp}^{-1})$. 
%
%%For each $K$-algebra $R$ and character $\chi\colon \Gal(\overline{E}/E)\to R^{\times}$, denote by $R(\chi)$ a rank-1 $R$-module with $ \Gal(\overline{E}/E)$-action by $\chi$ and let 
%%$$B(\chi):=( B(\baar{E})_{\Q}\otimes_{K}R)^{\Gal(\overline{E}/E)}.$$
We then   have a $\Lambda^{-}_{\circ}$-module 
$$B(\chi_{\rm univ, \circ}):=(B(\overline{E})\otimes \Lambda_{\circ}^{-}(\chi_{\rm univ, \circ}))^{\Gal(\overline{E}/E)},$$
 whose specialisation at any finite order character $\chi\in \mathscr Y^{-}$ is $B(\chi)$, 
 and height pairings
(\cite[\S 2.3]{PR2}, see also \cite[\S11]{nek-selmer})
\begin{align}\label{bigHP}
B(\chi_{\rm univ , \circ})\otimes_{ \Lambda_{\circ}^{_{}}} B^{\vee}(\chi_{\rm univ, \circ}^{-1}) &\to \Lambda_{\circ}^{-}
%\label{bigHP2}
%B(\chi_{{\rm univ}, \wp})\otimes_{\Lambda_{\wp}} B^{\vee}(\chi_{{\rm univ}, \wp}^{-1}) &\to \Lambda_{\wp}^{-}
\end{align}
associated with choices of a $p$-adic logarithm $\ell$ and of Hodge splittings. We suppose that  $\ell\colon {\Gal(E^{\rm ab}/E})\to L$ is the cyclotomic logarithm and that the Hodge splittings are given by the unit root subspaces. 
%[[We should assume somewhere that B is ordinary which should be equivalent to a condition on the CM type]]
%DD: the  condition  \eqref{ord} is sufficient as seen below.

%
%
%We then have height pairings 
%\begin{align}\label{bigHP1}
%A(\chi_{0}\chi_{\rm univ})\otimes_{ \Lambda} A^{\vee}(\chi_{0}^{-1}\chi_{\rm univ}^{-1}) &\to \Lambda\\
%\label{bigHP2}
%A(\chi_{0}\chi_{{\rm univ}, \frakp})\otimes_{\Lambda_{\frakp}} A^{\vee}(\chi_{0}^{-1}\chi_{{\rm univ}, \frakp}^{-1}) &\to \Lambda_{\frakp}
%\end{align}
%associated to choices of a $p$-adic logarithm $\ell$ and of Hodge splittings. We suppose that  $\ell\colon {\Gal(F^{\rm ab}/F})\to L^{\times}$ is the cyclotomic logarithm and that the Hodge splittings are given by the unit root subspaces. 

%Let $R=R\subset \Lambda$, respectively $R_{\frakp}\subset \Lambda_{\frakp}$, be the associated regulators, that is, the discriminants of the pairings \eqref{bigHP1}, respectively \eqref{bigHP2}.

%Let $R=R^{(\chi_{0})}\subset \Lambda$, respectively $R_{\frakp}=R_{\frakp}^{(\chi_{0})}\subset R_{\frakp}$, be the associated regulators, that is, the discriminants of the pairings \eqref{bigHP1}, respectively \eqref{bigHP2}.

\begin{thm}\label{MT}
Let $B$ be a simple CM abelian variety over the CM field $E$ with associated Hecke character $\lam$ satisfying   \eqref{cc}, \eqref{root num}, and \eqref{ord}; suppose that the extension $E/F$ is ramified.
%not unramified. 
Let $p$ be a rational prime, and suppose that $p \ndivide 2 D_{F} h_E^-$,
where $h_E^-=h_E/h_F$ is the relative class number of $E/F$ and $D_{F}$ is the absolute discriminant of $F$. 
% for any finite order character $\chi_{0}$ satisfying \eqref{chi0}
Let   ${\wp}$  be a prime of $F$ above $p$, and let $L\supset K_{w}\supset K$ be as above.

%\begin{enumerate}
%\item  
%$R\neq 0.$
%\item\label{2} 
%$R_{\frakp}\neq 0.$
%\item The set of finite order characters $\chi$ of $\Gamma^{-}$ such that  the pairing \eqref{HPchi} is non-vanishing is generic, that is, its complement is  contained in a proper Zariski-closed subset of $\mathscr{Y}$. 
%\item\label{4}
Then for almost all finite-order characters  $\chi$ of $\Gamma^{-}_{\wp}$, the pairing \eqref{HPchi} 
$$B(\chi) \otimes_{L} B^{\vee}(\chi^{-1})\to L$$
 is non-vanishing. Equivalently, the paring \eqref{bigHP} for $\circ=\wp$  (hence also for $\circ=\emptyset$) is nonzero.
%\end{enumerate}
\end{thm}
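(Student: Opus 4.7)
The plan is to translate the non-vanishing of \eqref{HPchi} into the non-vanishing of the cyclotomic derivative of a Katz $p$-adic $L$-function along $\Y^-_\wp$, and then to invoke a Hida--Hsieh--Burungale rigidity theorem. The first step is to set up the Katz two-variable $p$-adic $L$-function $\mathscr{L}_p(\lambda) \in \Lambda$ interpolating $L(0,\lambda\chi)$ at finite-order characters $\chi \in \Y$; hypothesis \eqref{ord} ensures this construction is available. A $p$-adic functional equation in the variable $\chi$, combined with the sign condition $w(\lambda) = -1$ from \eqref{root num}, forces $\mathscr{L}_p(\lambda)$ to vanish identically on the anticyclotomic subvariety $\Y^- \subset \Y$; thus its cyclotomic derivative $\partial_{\mathrm{cyc}}\mathscr{L}_p(\lambda)$ is a well-defined analytic function on $\Y^-$.

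The second step is to prove an anticyclotomic Gross--Zagier formula
\[\partial_{\mathrm{cyc}}\mathscr{L}_p(\lambda)\big|_{\Y^-_\wp} \;=\; c \cdot \langle \mathbf{P}, \mathbf{P}^\vee\rangle_{\ell_{\mathrm{cyc}}}\]
in $\Lambda^-_\wp$, where $\mathbf{P} \in B(\chi_{\mathrm{univ}, \wp})$ and $\mathbf{P}^\vee \in B^\vee(\chi_{\mathrm{univ}, \wp}^{-1})$ are $\Lambda^-_\wp$-adic Heegner classes whose specialisations at finite-order $\chi$ are (up to nonzero factors) the CM points $P(\chi) \in B(\chi)$ and $P^\vee(\chi^{-1}) \in B^\vee(\chi^{-1})$; the pairing is the $\Lambda$-adic height \eqref{bigHP}, and $c$ is a nonvanishing product of periods and local Euler factors. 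A pointwise version of this identity is due to the second author; the upgrade to a $\Lambda^-_\wp$-adic identity follows the template of Bertolini--Darmon--Prasanna, who in a related setting packaged CM points into a canonical $\Lambda$-adic family.

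The third step is the non-vanishing of $\partial_{\mathrm{cyc}}\mathscr{L}_p(\lambda)$ at almost all finite-order characters of $\Y^-_\wp$, which follows from the Hida--Hsieh--Burungale non-vanishing theorems for Katz $p$-adic $L$-functions via Chai's work on Hecke-stable subvarieties of the mod-$p$ Shimura variety. Either such a rigidity argument applies directly to the derivative, or one first rewrites the derivative as a Katz $p$-adic $L$-function for a suitably twisted Hecke character. The hypotheses $p \nmid 2D_F h_E^-$ and the ramification of $E/F$ are precisely the input conditions required. Combining the three steps yields a cofinite set of finite-order $\chi \in \Y^-_\wp$ with $\langle P(\chi), P^\vee(\chi^{-1})\rangle \ne 0$; hence \eqref{HPchi} is non-vanishing and \eqref{bigHP} for $\circ = \wp$ is nonzero in $\Lambda^-_\wp$.

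I expect the main obstacle to be the second step: the Gross--Zagier formula in the literature is pointwise, and upgrading it to the $\Lambda^-_\wp$-adic identity above — in a form from which non-vanishing transfers cleanly to the height side — requires constructing the $\Lambda$-adic Heegner class $\mathbf{P}$, interpolating $p$-adic periods, and keeping careful track of the local factors as $\chi$ varies. The first and third steps largely recombine known inputs.
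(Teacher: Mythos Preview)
Your overall three-step strategy matches the paper's, but your account of step~2 misidentifies both the input and the obstacle. The formula of the second author \cite{Di} is already $\Lambda^-$-adic, not pointwise; what it concerns, however, is the \emph{Rankin--Selberg} $p$-adic $L$-function $L_p(\sigma_E\otimes\chi_0^{-1})$ attached to a $\GL_2$-representation $\sigma=\theta(\psi)$, not the Katz $p$-adic $L$-function of $\lambda$. There is no direct Gross--Zagier formula for $L_{\Sigma_E c,\lambda^*}$ available, and your proposed ``upgrade from pointwise to $\Lambda$-adic'' is not the issue.

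The missing idea is the passage from the Rankin--Selberg formula to a Katz formula, which the paper accomplishes by a BDP-style factorisation trick. One first chooses an auxiliary finite-order character $\chi_0$ such that $w(\lambda\chi_0\chi_0^{*-1})=+1$ (this is where the ramification of $E/F$ enters, via Lemma~\ref{twist}), sets $\psi:=\lambda\chi_0^{-1}$ and $\sigma:=\theta(\psi)$, and proves a factorisation
\[
L_p(\sigma_E\otimes\chi_0^{-1})\ \doteq\ L_{\Sigma_E,\lambda\chi_0^*\chi_0^{-1}}\cdot L_{\Sigma_E c,\lambda^*}
\]
up to periods (Lemma~\ref{p factor}). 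The first Katz factor does \emph{not} vanish on $\Y^-_\wp$: this is a second, independent invocation of the Hida--Hsieh non-vanishing theorems (Theorem~\ref{nvt1}), with root number $+1$ and using only $p\nmid 2D_F$. Dividing by it, and transporting the Heegner points from $A_E$ to $B$ via the isogeny $B_\lambda\otimes_K K'\cong A_E^{\oplus r}\otimes_K K'(\chi_0^{-1})$ (Lemma~\ref{b and a}), yields Theorem~\ref{MT2}. Thus the non-vanishing results are used \emph{twice}: once to make the points $\mathscr P\otimes\mathscr P^\vee$ well-defined in $\mathscr K^-_\wp$, and once (your step~3, Theorem~\ref{nvt2}, using the full hypothesis $p\nmid 2D_Fh_E^-$) to conclude that their height is nonzero. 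Your sketch collapses these into a single application and omits the auxiliary $\chi_0$ and the factorisation entirely.
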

Here ``almost all'' means that the set of finite-order characters $\chi\in \mathscr{Y}_{\wp}^{-}$ which fail to satisfy the conclusion of the theorem is not Zariski dense in $\mathscr{Y}_{\wp}^{-}$. When $\dim \mathscr{Y}_{\wp}^{-}=1$ (e.g. $F=\Q$), this is equivalent to such set of exceptions being finite.
%Note that:  the condition of  \eqref{4} is equivalent to the genericity in $\mathscr{Y}_{\frakp}$, as $\mathscr{Y}_{\frakp}$   has dimension~$1$; and that in  fact  \eqref{2} implies all the other statements. %fixme: prove this (say it is trivial for rank 0  case?) In fact add quote mention some ES result...  [will check with ye tian if they prove everything in CM case])
%DD: removed the other statements, since in fact we don't know the bound on the MW rank due to problems with ES in this case! (cf. the restriction in Nekovar's paper).

%fixmefixme: the following is removed for now but to be put back together with the anticyclotomic formula.
% It is reasonable to conjecture that the  condition \eqref{root num}  implies that  $B(\chi_{\rm univ})$ has generic rank one over $\Lambda$ (and that the condition $\varepsilon(1/2,\lambda)=+1$ implies that $B(\chi_{\rm univ})$ is $\Lambda$-torsion).\footnote{The situation under consideration  is essentially  the missing case in Nekov{\'a}{\v{r}}'s \cite{Ncm}   generalisation of Kolyvagin's work.} 

\subsection{Gross--Zagier formula for the Katz $p$-adic $L$-function} As recalled in \S\ref{PLF}, under our conditions and assuming that the extension $L\supset K_{w}$ splits $E$, 
% and after fixing a $p$-adic place $w$ of $K$ as above,
  the character $\lam$ (more precisely,  its $w$-adic avatar)  has a $p$-adic  CM type $\Sigma_{E}\subset \Hom(E, L)$ of $E$. We also identify $\Sigma_{E}$ with  (i) a choice, for each prime $\wp\vert p$ of $F$, of one among the two primes of $E$ above $\wp$, and (ii)  an element of $\Z[\Hom(E, L)]$.  To  the CM type $\Sigma_{E}$ is attached 
% (resp. its conjugate CM type $\Sigma_{E}c$, where $c$ )  
the Katz $p$-adic $L$-function 
$$L_{\Sigma_{E}}\in
% \mathscr{O}_{K_{w}}\llb \Gamma\rrb_{K_{w}}=
\Lambda.$$
%= \mathscr{O}(\mathscr Y)^{\rm b}.\footnote{Here and in the rest of the paper, the superscript b denotes bounded rigid analytic functions}$$
%(resp. $L_{\Sigma_{E}c}\in \OO(\Y)^{\rm b})$.
 It interpolates the values $L(0, \lam'^{-1}) $ for characters $\lam'$ whose infinity type lies  in a certain region  of $\Z[\Hom(E, \C_{p})]$; this region is uniquely determined by $\Sigma_{E}$ and  contains in particular the infinity  type $\Sigma_{E}$. 
%(resp. $\Sigma_{E}c$) 
%but not $\Sigma_{E}c$ (resp. $\Sigma_{E})$. 
%interpolating the values $L(1/2, \lam^{\rm u}\chi')$ for finite order characters $\chi'\in \mathscr Y$. 

Let $\lam^{*}(x):=\lam(x^{c})$, where $c$  denotes the complex conjugation of $E/F$. 
The root number assumption \eqref{root num}  implies that
 $L_{\Sigma_{E}}(\lam)= L_{\Sigma_{E}c}(\lam^{*})=L(0, \lam^{*-1})=L(0, \lam^{-1})=L(1, \lam)=L(B,1)=0$, and more generally that the function $L_{\Sigma_{E}c, \lam^{*}}\colon \chi'\mapsto L_{\Sigma_{E}c}(\lam^{*}\chi')$ vanishes along $\mathscr Y^{-}\subset \mathscr Y$. We may then consider
% $L_{\Sigma_{E}}(\lam^{*})=L(0, \lam^{*-1})=L(1, \lam)=L(B,1)=0$, and more generally that the function $L_{\Sigma_{E}c, \lam^{*}}(\chi'):=L_{\Sigma_{E}c}(\lam\chi')$ vanishes along $\mathscr Y^{-}\subset \mathscr Y$. We can then consider:
%\begin{itemize}
%\item
  the  cyclotomic derivative  
$$L'_{\Sigma_{E}c, \lam^{*}}\colon\chi\mapsto   {{\rm d}\over{\rm d } s}  L_{\Sigma_{E} c}( \lam^{*}\chi \cdot \chi_{\rm cyc}^{s})|_{s=0},$$
 for $\chi\in \mathscr{Y}^{-}$. (Here $\chi_{\rm cyc}$ is the $p$-adic cyclotomic character of $E_{\A}^{\times}$.)
% in the cyclotomic direction, i.e. the image of $L_{\Sigma_{E}c, \lam}$  in the conormal sheaf $\mathscr N^{*}_{\mathscr Y^{-}/\mathscr Y}\cong \mathscr O_{\mathscr Y^{-}}\otimes \Gamma^{+}$.
%\item for any  CM  type $\Sigma_{E}'$,  the functions 
%$$L_{\Sigma_{E}', \lam^{*}}\colon \chi\mapsto L_{\Sigma_{E}'}( \lam^{*}\chi);$$
%as long as $\Sigma_{E}'\neq \Sigma_{E}c$, these  need not vanish along $\Y^{-}$ as $\lam^{*}$ and its twists by anticyclotomic characters lie outside the interpolation region.
%\end{itemize} 

For $\circ=\emptyset, \wp$, let  ${\mathscr{K}_{\circ}^{-}}$ be the field of fractions of $ \Lambda^{-}_{\circ}$
 and let $B(\chi_{\rm univ, \circ})_{\mathscr{K_{\circ}^{-}}}:=B(\chi_{\rm univ, \circ})\otimes_{\Lambda_{\circ}^{-}}{\mathscr{K}_{\circ}^{-}}$; similarly for  $B^{\vee}(\chi_{\rm univ}^{-1})_{\mathscr{K_{\circ}^{-}}}$.
% for which we have the following Gross--Zagier formula.

\begin{thm}\label{MT2}  Let $\circ=\emptyset $ or $\circ =\wp$. Under   the assumptions of Theorem \ref{MT}, 
there is a `pair of  points' 
$$\mathscr P\otimes\mathscr P^{\vee}\in B(\chi^{-1}_{\rm univ, \circ})_{\mathscr{K_{\circ}^{-}}}\otimes B(\chi_{\rm univ, \circ})_{\mathscr{K_{\circ}^{-}}}$$
% B(\chi_{\rm univ})\otimes_{\Lambda^{-}} B^{\vee}(\chi_{\rm univ}^{-1})\otimes_{\Lambda^{-}} {\mathscr{K^{-}}}$$
satisfying 
$$\langle \mathscr P, \mathscr P^{\vee}\rangle_{\circ} =  L_{\Sigma_{E}c, \lam^{*}}'|_{\Y_{\circ}^{-}}$$
in $\mathscr{K}_{\circ}^{-}$, where $\langle\, ,\, \rangle_{\circ}$ is the height pairing \eqref{bigHP},
% [[ do we neeed next sentence? ]]
and we identify $\Gamma^{+}\cong \Z_{p}$ via the cyclotomic logarithm.
\end{thm}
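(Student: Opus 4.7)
The plan is to deduce Theorem \ref{MT2} from two inputs: the second author's $p$-adic Gross--Zagier formula for anticyclotomic Hilbert modular Rankin--Selberg families, and a factorization of that Rankin--Selberg $p$-adic $L$-function as a product of two Katz $p$-adic $L$-functions. The overall scheme is formally parallel to the argument introduced by Bertolini--Darmon--Prasanna \cite{BDP}, with the novelty that the factorization is to be evaluated on the central vanishing locus of the Rankin--Selberg factor, so that the output is a cyclotomic derivative rather than a central value.

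First I would set up the automorphic translation. The Hecke character $\lambda$ is the theta-lift datum for a cuspidal automorphic representation $\pi = \pi(\lambda)$ of $\mathrm{GL}_{2}(\A_{F})$ whose associated motive is isogenous to $B$ and such that $L(s, \pi_{E}\otimes \chi) = L(s, \lambda\chi)\, L(s, \lambda^{c}\chi)$ for any Hecke character $\chi$ of $E$. Condition \eqref{cc} makes $\pi$ self-dual of $\eta$-type, and the sign assumption \eqref{root num} gives $\varepsilon(1/2, \pi_{E}\otimes\chi) = -1$ for every finite-order $\chi \in \mathscr{Y}^{-}_{\circ}$, so that the Rankin--Selberg $p$-adic $L$-function $\mathscr{L}_{\circ}(s) \in \Lambda_{\circ}^{-}\llbracket s\rrbracket$ attached by the second author's construction vanishes identically on $\Y^{-}_{\circ} = \{s = 0\}$.

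Next I would establish the factorization
$$\mathscr{L}_{\circ}(s) \;=\; c\cdot L_{\Sigma_{E}}(\lambda\, \chi_{\rm univ, \circ}\, \chi_{\rm cyc}^{s}) \cdot L_{\Sigma_{E}c}(\lambda^{*}\, \chi_{\rm univ, \circ}\, \chi_{\rm cyc}^{s})$$
for some unit $c \in \Lambda_{\circ}^{-}\llbracket s\rrbracket^{\times}$, by comparing interpolation formulas at a Zariski-dense subset of points $(s,\chi)$ in the common range of interpolation and propagating via density. By the generic non-vanishing of Katz $p$-adic $L$-values recalled in the introduction, the first factor $L_{\Sigma_{E}}(\lambda \chi_{\rm univ, \circ})$ is a nonzero element of $\Lambda_{\circ}^{-}$, while the second factor vanishes identically on $\Y^{-}_{\circ}$ by \eqref{root num}. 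Leibniz's rule at $s = 0$ therefore yields
$$\mathscr{L}_{\circ}'(0) \;=\; c(0)\cdot L_{\Sigma_{E}}(\lambda\chi_{\rm univ, \circ}) \cdot L'_{\Sigma_{E}c, \lambda^{*}}\big|_{\Y^{-}_{\circ}}.$$
Applying the second author's $p$-adic Gross--Zagier formula then produces a pair of universal Heegner-type classes $\mathscr{Q} \otimes \mathscr{Q}^{\vee}$ in $B(\chi^{-1}_{\rm univ, \circ})_{\mathscr{K}_{\circ}^{-}} \otimes B(\chi_{\rm univ, \circ})_{\mathscr{K}_{\circ}^{-}}$ satisfying $\langle \mathscr{Q}, \mathscr{Q}^{\vee}\rangle_{\circ} = \mathscr{L}_{\circ}'(0)$, and setting $\mathscr{P} := (c(0)\, L_{\Sigma_{E}}(\lambda\chi_{\rm univ, \circ}))^{-1}\mathscr{Q}$ and $\mathscr{P}^{\vee} := \mathscr{Q}^{\vee}$ in the fraction field $\mathscr{K}_{\circ}^{-}$ gives the desired identity.

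I expect the main obstacle to be the precise form of the factorization: matching the Petersson-type normalization of $\mathscr{L}_{\circ}$ with the CM-period normalization of the two Katz factors, and checking that the local $p$-adic multipliers (determined by the ordinary filtration on $\pi_{p}$ versus by the CM type $\Sigma_{E}$) together with the ramified and archimedean local factors line up on the nose. Once the two sides are shown to share the same interpolation formula on a Zariski-dense set, the remaining steps are a formal derivative computation and a rescaling in $\mathscr{K}_{\circ}^{-}$.
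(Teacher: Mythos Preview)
Your overall architecture is right, but there is a genuine gap in the choice of automorphic input. Taking $\pi=\theta(\lambda)$ directly does not work: for every finite-order anticyclotomic $\chi\in\mathscr{Y}^{-}_{\circ}$ one has
\[
\varepsilon(1/2,\pi_{E}\otimes\chi)=w(\lambda\chi)\cdot w(\lambda^{*}\chi)=(-1)(-1)=+1,
\]
since the root number is constant along the connected space $\mathscr{Y}^{-}_{\circ}$ and equals $w(\lambda)=-1$ for each Hecke factor. So the sign hypothesis needed to invoke the $p$-adic Gross--Zagier formula of \cite{Di} fails. Worse, your two Katz factors are symmetric under the involution $\chi\mapsto\chi^{-1}$: both $L_{\Sigma_{E}}(\lambda\chi)$ and $L_{\Sigma_{E}c}(\lambda^{*}\chi)$ interpolate the central value $L(1,\lambda\chi^{\pm 1})$, which vanishes for all $\chi\in\mathscr{Y}^{-}_{\circ}$ by the root-number argument just given. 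Hence your ``first factor'' $L_{\Sigma_{E}}(\lambda\chi_{\rm univ,\circ})$ is identically zero on $\mathscr{Y}^{-}_{\circ}$, not a unit, the Rankin--Selberg function vanishes to order at least $2$ in the cyclotomic direction, and the Leibniz step collapses.

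The paper repairs this by introducing an auxiliary finite-order character $\chi_{0}$, ramified away from $p$, chosen (Lemma~\ref{twist}) so that $w(\lambda\chi_{0}\chi_{0}^{*-1})=+1$. One then works with $\sigma=\theta(\lambda\chi_{0})$: the factorisation (Lemma~\ref{p factor}) becomes
\[
L_{p}(\sigma_{E}\otimes\chi_{0}^{-1})\ \doteq\ L_{\Sigma_{E},\,\lambda\chi_{0}\chi_{0}^{*-1}}\cdot L_{\Sigma_{E}c,\,\lambda^{*}},
\]
where now the first factor is generically nonvanishing on $\mathscr{Y}^{-}_{\circ}$ by Hida--Hsieh (Theorem~\ref{nvt1}) while only the second vanishes. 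The Rankin--Selberg sign is $(+1)(-1)=-1$, so Theorem~\ref{gzf} applies and produces Heegner points on $A_{\sigma,E}$; a CM comparison (Lemma~\ref{b and a}) transports them to $B$. This auxiliary twist, which breaks the symmetry between the two Katz factors, is the missing idea in your proposal.
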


The  construction of the points depends on some choices, analogously to how the construction of rational points on an elliptic curve over $\Q$ of analytic rank one depends on the choice of an auxiliary imaginary quadratic field. Like in that situation, it comes from Heegner points and relies on a non-vanishing result for $L$-functions -- in this case, the results of Hida and Hsieh \cite{Hi3, Hs1} for anticyclotomic Katz $p$-adic $L$-functions. Nevertheless the auxiliary setup does not seem to be explored in regard to the cyclotomic derivative. 

The following conjecture, which can be regarded as analogous to the results of Kolyvagin,  would imply that the ambiguity is rather mild.
\begin{conj} Let $\circ=\emptyset $ or $\circ=\wp$ for a prime $\frakp$ of $F$. The $\mathscr{K}_{\circ}^{-}$-vector spaces $B(\chi^{-1}_{\rm univ, \circ})_{\mathscr{K}_{\circ}^{-}}$, $B^{\vee}(\chi_{\rm univ, \circ})_{\mathscr{K}_{\circ}^{-}}$ have dimension one.  
\end{conj}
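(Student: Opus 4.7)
The plan is to sandwich each dimension between one and one. The lower bound is a direct consequence of Theorems \ref{MT} and \ref{MT2}; the upper bound would follow from an Iwasawa-theoretic Euler system / Kolyvagin argument, which is the substantive content.

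For the lower bound, Theorem \ref{MT2} produces classes $\mathscr P$ and $\mathscr P^\vee$ whose $\Lambda_\circ^-$-valued height pairing equals $L'_{\Sigma_E c,\lambda^*}|_{\Y_\circ^-}$. Theorem \ref{MT} guarantees that for a Zariski-dense set of finite-order characters $\chi\in \Y_\circ^-$ the specialization of the height pairing at $\chi$ is non-vanishing, so $L'_{\Sigma_E c,\lambda^*}|_{\Y_\circ^-}$ is a non-zero element of $\mathscr K_\circ^-$. Hence $\mathscr P$ and $\mathscr P^\vee$ are non-zero in their respective $\mathscr K_\circ^-$-vector spaces, which (using the polarization $B\cong B^\vee$ available for the CM abelian variety $B$ to identify modules where necessary) yields $\dim\ge 1$ for both $B(\chi^{-1}_{\rm univ,\circ})_{\mathscr K_\circ^-}$ and $B^\vee(\chi_{\rm univ,\circ})_{\mathscr K_\circ^-}$.

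For the upper bound, I would deform the classes of Theorem \ref{MT2} into an anticyclotomic Euler system of Heegner-type CM cycles on the Shimura variety underlying the Gross--Zagier formula, indexed by ring-class extensions of $E$ ramified at auxiliary ``Kolyvagin primes'' of $F$ and compatible with the tower $\Gamma_\circ^-$. The Iwasawa-theoretic Kolyvagin machine (in the spirit of Howard, Nekov\'a\v{r}, Agboola--Howard, and Castella--Wan) would then bound the Bloch--Kato Selmer $\Lambda_\circ^-$-module of $B$ twisted by $\chi^{-1}_{\rm univ,\circ}$ in terms of the leading invariant of the Euler system. Since that leading invariant agrees (up to a non-zero scalar) with the height-pairing element identified above, the upper bound $\dim_{\mathscr K_\circ^-}\le 1$ follows; the parallel statement for $B^\vee$ is obtained by duality.

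The main obstacle will be the Euler system construction itself. Over an imaginary quadratic field the relevant classes are classical (Heegner points on modular curves, with Hilbert class field norm relations), but here $E$ is a CM field of arbitrary degree, the underlying Shimura variety is higher dimensional, and the tame and $p$-adic norm relations must be compatible with the full $[F:\mathbf Q]$-dimensional anticyclotomic tower $\Gamma_\circ^-$ rather than a single $\mathbf Z_p$-extension. The Kolyvagin primitivity condition --- non-vanishing of the classes modulo $p$ along the tower --- should be accessible via the Hida--Hsieh mod $p$ non-vanishing results for Katz $L$-functions and the Chai rigidity arguments \cite{Ch1,Ch2,Ch3} that underlie Theorem \ref{MT}, so the hardest new input required is the production of the tame Euler system norm relations in this higher-dimensional CM setting.
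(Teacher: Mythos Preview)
The statement you are addressing is stated in the paper as a \emph{conjecture}, not a theorem: the paper offers no proof, and remarks only that the case where $E$ is imaginary quadratic and $B$ is the base-change of an elliptic curve over $\Q$ is contained in the main result of Agboola--Howard \cite{AH}. So there is no ``paper's own proof'' to compare against; your proposal is a research outline for an open problem.

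Your lower bound is correct and does follow from the paper's Theorems \ref{MT} and \ref{MT2} (under their hypotheses): the nonvanishing of $L'_{\Sigma_{E}c,\lam^{*}}|_{\Y_{\circ}^{-}}$ forces $\mathscr P$ and $\mathscr P^{\vee}$ to be nonzero in $B(\chi_{\rm univ,\circ}^{-1})_{\mathscr K_{\circ}^{-}}$ and $B^{\vee}(\chi_{\rm univ,\circ})_{\mathscr K_{\circ}^{-}}$ respectively. This much the paper implicitly establishes.

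For the upper bound you correctly identify both the strategy (an anticyclotomic Euler system of Heegner type feeding a Kolyvagin--Howard machine) and the genuine obstacle: constructing such a system, with the requisite tame norm relations, over a CM field $E$ of arbitrary degree and along the full $[F:\Q]$-dimensional tower $\Gamma_{\circ}^{-}$. This is not available in the literature in the required generality, and the paper does not claim it; your proposal is therefore an honest sketch of what would be needed rather than a proof. One further caution: the ``primitivity'' input you hope to extract from Hida--Hsieh/Chai concerns nonvanishing of $p$-adic $L$-values or heights, not directly the mod-$p$ nontriviality of cohomology classes that Kolyvagin-type arguments require; bridging that gap is itself nontrivial and is part of what makes the Agboola--Howard result specific to its setting.
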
 
%that, under our root number   condition \eqref{root num}  the $\Lambda^{-}$-module   $B(\chi_{\rm univ})$ has rank one (and that  under the condition $\varepsilon(1/2,\lambda)=+1$ it would be  $\Lambda$-torsion). However in this situation 
When  $E$ is an imaginary quadratic field and $B$ is the  base-change of an elliptic curve over $\Q$, the conjecture is part of the main result of Agboola and Howard in \cite{AH}. 
% this is  essentially  the missing case in Nekov{\'a}{\v{r}}'s \cite{Ncm}   generalisation of Kolyvagin's  bounds on Selmer groups.

\begin{remark}
It is natural to  wonder about  the arithmetic significance of the values 
\beq \label{L values} 
L_{\Sigma_{E}'}(\lam^{*}) \eeq
for CM types $\Sigma_{E}'$ such that 
$$\delta(\Sigma_{E}'):=|\Sigma_{E}'\cap \Sigma_{E}|\geq 1.$$
 We would like to suggest  that  if   $  \delta (\Sigma_{E}')\leq r:= {\rm ord}_{s=1} L(B, s)$, the cyclotomic order of vanishing of $L_{\Sigma_{E}'}$ at $\lam^{*}$ (that is, the smallest $k$ such that the cyclotomic derivative $L_{\Sigma_{E}'}^{(k)}(\lam^{*})\neq 0 $) should be 
\beq \label{conj a la rubin}
{\rm ord}_{\rm cyc} L_{\Sigma_{E}',\lam^{*}} \stackrel{?}{=}  r-\delta(\Sigma_{E'})
\eeq
  and that there should be 
% $$L_{\Sigma_{E}'}^{(r-\delta(\Sigma_{E'}))} (\lam^{*})\neq 0 \Leftrightarrow  \dim_{K}B(E)_{\Q}=r \Leftrightarrow L^{(r)}(B,1) $$
 an explicit formula relating
 $$L_{\Sigma_{E}'}^{(r-\delta(\Sigma_{E'}))} (\lam^{*})$$ to a $p$-adic regulator.  When $[E:\Q]$ is a quadratic field and $\lambda$ comes from an elliptic curve, \eqref{conj a la rubin} was conjectured by Rubin \cite{rubin-conj} together with a precise formula, and proved by himself when $r\leq 1$.
 
Theorem \ref{MT2}  provides evidence for the general case  of \eqref{conj a la rubin} in  one of the cases  with  $r\leq 1$, whereas the other such case is treated, when $[E:\Q]=2$, by Rubin's formula as generalised by Bertolini--Darmon--Prasanna \cite{BDP}.\footnote{We hope to present a generalisation of this formula in a sequel to the present paper.} The particular interest of the case of $[E:\Q]>2$ lies of course in the possibility of having $\delta(\Sigma_{E}')\geq 2$:  our speculation 
is also  inspired from  the  recent work of Darmon--Rotger \cite{DR} on $p$-adic $L$-functions related to certain   Mordell--Weil groups of  rank~$2$. % should be a  $p$-adic avatar of   $L^{(r)}(B,1)$ whenever.
%  when the rank $\dim_{K} B(E)_{\Q}$ is $\geq r+j$.
 \end{remark}
 
%related to some $p$-adic regulators of $B(E)_{\Q} $ when this group has $K$-dimension equal to $r$. 
%[[aug 9]]

%[[to be continued]]
\subsection{An arithmetic application} For an elliptic curve $E_{/\Q}$ and a prime $p$, let us  refer to 
 the implication 
$$
\corank_{\Z_{p}}{\rm Sel}_{p^{\infty}}(E_{/\Q})=1 \implies {\rm ord}_{s=1}L(s,E_{/\Q})=1
$$ 
as the ``$p$-converse theorem''  (to the one of Gross--Zagier, Kolyvagin and Rubin).
In \cite{BuTi}, the authors establish the $p$-converse theorem in the case of $p$-ordinary CM elliptic curves  (complementing the earlier works \cite{Sk, Zh}). The approach crucially relies on the auxiliary setup introduced in the proof of the main results here, and also on the main results themselves (Theorem \ref{MT} and Theorem \ref{MT2}). 
%\medskip
\subsection{Context and strategy of proof}
When $E$ is an imaginary quadratic field and $B=A_{E}$ for a  CM elliptic curve $A/\Q$, Theorem \ref{MT} is a  consequence (see Rubin \cite{rubin})  %fixme: addref
of the aforementioned result of Bertrand together with Mazur's conjecture on the generic non-vanishing of Heegner points along anticyclotomic extensions (which in that case is proved, via the Gross--Zagier formula, by the generic non-vanishing of derivatives of $L$-functions established by Rohrlich). %fixme:add refs
Our method is rather different (although  not  distant in spirit): we first deduce the formula of Theorem of \ref{MT2} from the $p$-adic Gross--Zagier formula of the second author \cite{Di}.  
%(We similarly deduce the formula of Theorem \ref{MT3} from the main result of Liu--Zhang--Zhang in \cite{LZZ}.)
Theorem \ref{MT}, or rather its more precise version Theorem \ref{MT-text} below,  is then a consequence of 
Theorem \ref{MT2}  and non-vanishing results of the first author \cite{Bu} (or their refinement in  \cite{BuHi}) for the derivatives of  Katz $p$-adic $L$-functions.
%Thas  it is based on non-vanishing results for Katz $p$-adic $L$-functions and their derivatives due to  Hida, Hsieh, and the first author, and on the $p$-adic Gross--Zagier formula of the second author. 
As a corollary we recover Mazur's conjecture, which in our case was proved by Aflalo--Nekov{\'a}{\v{r} }\cite{AN} as a generalisation of work of Cornut and Vatsal. Note further that our method  would readily adapt  to cover the case of (generalised) Heegner cycles upon availability of a suitably general $p$-adic Gross--Zagier formula for them.\footnote{This case would be especially interesting as the  archimedean heights are not known to be well-defined in that context.} The second author expects to present such a formula as part of a forthcoming version of \cite{Di-u}.

\medskip

A parallel  approach is followed by the first author in a series of works establishing, without assumptions of complex multiplication, the generic non-vanishing of  Heegner points and cycles, or more precisely of (the reduction modulo $p$ of) their images under the Abel--Jacobi map.\footnote{Note that, as $p$-adic heights factor through the $p$-adic Abel--Jacobi map,  their  non-vanishing is a finer statement.} The    strategy to prove Theorem \ref{MT2} 
%and \ref{MT3} 
is inspired from the  proof of  Rubin's formula  in  \cite{BDP}.
% which was for us a source of inspiration. %fixme add ref, and expand slighlty since BDP is very important for us. 
As in    \cite{BDP}, we can remark that we have established a result for a motive attached to the group ${\rm U}(1)$ by making use of $p$-adic  $L$-functions for  ${\rm U}(1)\times {\rm U}(2)$. Readers with a generous attitude towards  mathematical induction might find in this  a  good omen  for  future progress. 
%
%Finally, it is natural to  wonder what should be the arithmetic significance of the values 
%\beq \label{L values} 
%L_{\Sigma_{E}'}(\lam^{*}) \eeq
%for CM types $\Sigma_{E}'$ such that 
%$$|\Sigma_{E}'\cap \Sigma_{E}|=r\geq 2.$$
% Inspired by recent work of Darmon--Rotger and Nekovar--Scholl [[add ref]], we would like to speculate that the special values \eqref{L values} should be  $p$-adic avatars of the leading term  $L^{(r)}(B,1)$ when the rank $\dim_{K} B(E)_{\Q}$ equals~$r$. 
%%related to some $p$-adic regulators of $B(E)_{\Q} $ when this group has $K$-dimension equal to $r$. 
%[[aug 9]]
\subsubsection*{Acknowledgments} 
We would like to thank  D. Bertrand, K. B\"uy\"ukboduk, F. Castella,   M. Chida,  H. Darmon,  H. Hida, B. Howard,  M.-L. Hsieh, M. Kakde, J. Lin, J.  Nekov\'{a}\v{r}, D. Prasad, D. Rohrlich,   K. Rubin,  and Y. Tian for useful conversations, correspondence  or suggestions.  During part of the writing of this paper, the second author was funded by a public grant from the Fondation Math\'ematique Jacques Hadamard. 

\section{Proofs}

\subsection{CM theory }\label{sec: cm} We start by reviewing some basic results in the theory of Complex Multiplication. The classical reference is \cite{Sh3} (see especially \S18-20); see also \cite[\S 2.5]{CCO}.

Let $B$ be an abelian variety of dimension $d$ over a field $E$, such that ${\rm End}^{0}(B)=K$ is a CM field of degree $2d$. Denote by   $M\subset K$  the maximal totally real subfield.  The action of $K$ on ${\rm Lie}\, B$ gives, after base-change from $E$ to an extension $\iota\colon E\hookrightarrow C$ which splits $K$, a CM type $(K, \Sigma)$ over $C$, namely $\Sigma=\Sigma(B, \iota)$ is  a set of representatives for the action of $\Gal(K/M)$ on $\Hom (K, C)$. Finally, to the CM type $(K, \Sigma)$ we can associate its reflex CM type $(K^{*}, \Sigma^{*})$; the reflex field $K^{*}=K^{*}(\iota)$ (which depends on $\iota$) comes as a subfield $K^{*}\subset E$. The set $\Sigma_{E}:={\rm Inf}_{E/K^{*}}\Sigma^{*}\subset \Hom (E, C)$, consisting of those embeddings whose restriction to $K^{*}$ belongs to $\Sigma^{*}$, is a CM type of $E$. Finally, the CM type $\Sigma(\iota)$ gives rise to a homomorphism $N_{\Sigma(\iota)^{*}}\colon K^{*}(\iota)^{\times}\to K^{\times}$ called the \emph{reflex norm}. The homomorphism
\begin{align}\label{ref orm}
N_{\Sigma_{E}}:= N_{\Sigma(\iota)^{*}}\circ N_{E/K^{*}(\iota)}\colon E^{\times}\to K^{\times}
\end{align}
is independent of choices.
%[[to check if we really need to take the splitting $L$.]]

A CM type $\Sigma'$ of $K'$ with values in an extension $C$ of $\Q_{p}$ splitting $E$ is said to be  a \emph{$p$-adic CM type}\footnote{In some of the literature this is called a \emph{$p$-ordinary CM type}.} if its elements induce pairwise distinct $p$-adic places of $K'$.
%\footnote{This notion does not change if $(L, \iota_{p})$ is replaced by $(L', \iota_{p}'\colon L\hookrightarrow\overline{\Q}_{p}$) where $L'$ is a finite extension of $L$ and $\iota_{p}'|_{L}=\iota_{p}$.}
This condition can only be satisfied if all primes $\frakp'^{+}$  of  $K'^{+}$  (the maximal totally real subfield of $K'$) above $p$ split in $K'$. We may and will identify a $p$-adic CM type with a set of primes $\frakp'$ of $K'$ containing exactly one prime above each $\frakp'^{+}\vert p$ of $K'^{+}$.

\begin{lm}\label{K*} Suppose that $B$ has potentially \emph{ordinary}  reduction at all primes of $E$ above $p$. 
Then:
\begin{enumerate}
\item for each  embedding $\iota_{p}\colon E\hookrightarrow \overline{\Q}_{p}$,  the set $\Sigma(B, \iota_{p})$ is a $p$-adic CM type of $K$;
\item the prime $p$ is totally split in $K^{*}$; 
\item  for each $\iota_{p}\colon E\hookrightarrow \overline{\Q}_{p}$,  the set $\Sigma^{*}(\iota_{p}):=\Sigma(B, \iota_{p})^{*}$ is a $p$-adic CM type of $K^{*}$.
\end{enumerate}
\end{lm}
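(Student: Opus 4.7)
The three statements are linked: (1) will be the substantive input, (2) is a formal consequence of (1) via the Galois-theoretic description of the reflex field, and (3) follows once we know $p$ is totally split in $K^*$.

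For (1), the plan is to analyze the $p$-divisible group of the reduction of $B$ using the CM action. After base-changing $E$ so that $B$ acquires good ordinary reduction at every place above $p$ (permitted by the potentially ordinary hypothesis), one obtains an abelian variety $\ol{B}$ over the residue field on which $\cO_K$ still acts; its $p$-divisible group decomposes up to isogeny as $\ol{B}[p^\infty]\sim\bigoplus_{\frakp\mid p}\ol{B}[\frakp^\infty]$, where $\frakp$ runs over primes of $K$ above $p$. Each factor has height $n_\frakp:=[K_\frakp:\Q_p]$ and dimension $a_\frakp:=\#\{\sigma\in\Sigma(B,\iota_p):\sigma\text{ induces }\frakp\}$, with a faithful $\cO_{K_\frakp}$-action. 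Since $\ol{B}[\frakp^\infty]$ is ordinary, it splits up to isogeny as $G_\frakp^\circ\oplus G_\frakp^{\et}$ with $G_\frakp^\circ$ formal of dimension (hence height) equal to $a_\frakp$. Because $K_\frakp$ is a field, the $\cO_{K_\frakp}$-action on $G_\frakp^\circ$ is either zero or faithful; faithfulness forces the height to be a multiple of $n_\frakp$, so $a_\frakp\in\{0,n_\frakp\}$. A case analysis on primes $\frakp^+\mid p$ of $M=K^+$ then rules out the non-split case: if $\frakp^+$ were non-split in $K$ we would get $a_\frakp=[M_{\frakp^+}:\Q_p]$ while $n_\frakp=2[M_{\frakp^+}:\Q_p]$, contradicting $a_\frakp\in\{0,n_\frakp\}$. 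Hence every $\frakp^+$ splits in $K$ and $\Sigma(B,\iota_p)$ identifies with the set of primes $\frakp\mid p$ of $K$ at which $a_\frakp\neq 0$, containing exactly one prime above each $\frakp^+$; this is the $p$-adic CM type condition.

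For (2), recall that the reflex field $K^*$ is the fixed field in $\ol{\Q}$ of the stabilizer $H_\Sigma\subset\Gal(\ol{\Q}/\Q)$ of $\Sigma:=\Sigma(B,\iota_p)$ under post-composition on $\Hom(K,\ol{\Q})$; the prime $p$ is totally split in $K^*$ iff the decomposition group $D_p\cong\Gal(\ol{\Q}_p/\Q_p)$ (determined by $\iota_p$) is contained in $H_\Sigma$. Since the orbits of $D_p$ on $\Hom(K,\ol{\Q}_p)$ are exactly the fibers of the map $\sigma\mapsto\frakp_\sigma$, the all-or-nothing property from (1) is precisely the $D_p$-stability of $\Sigma$, which gives $D_p\subset H_\Sigma$. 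For (3), once $p$ is totally split in $K^*$ every prime of $K^*$ above $p$ has local degree $1$, the $D_p$-action on $\Hom(K^*,\ol{\Q}_p)$ is trivial, and every CM type of $K^*$---in particular the reflex $\Sigma^*(\iota_p)$---is vacuously a $p$-adic CM type.

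The main obstacle is the dimension--height input underlying (1): the assertion that a nonzero formal $p$-divisible group equipped with a faithful action of $\cO_{K_\frakp}$ has height divisible by $[K_\frakp:\Q_p]$. This is standard in the local theory of $p$-divisible groups (visible either via the Tate module as an $\cO_{K_\frakp}$-lattice, or via Dieudonn\'e theory), and I would simply invoke it. Granted this, the remaining argument is a routine combinatorial unwinding.
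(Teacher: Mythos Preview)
Your proof is correct and follows essentially the same route as the paper: part (1) is the content of the Shimura--Taniyama formula (the paper simply cites \cite[(2.1.4.1)]{CCO}, whereas you unpack the argument via the $p$-divisible group decomposition), part (2) is the Galois-theoretic characterisation of the reflex field as the fixed field of the stabiliser of $\Sigma$ (the paper cites \cite[Proposition 7.1]{kashio yoshida} for this, while you give the short direct proof that $D_p$-stability of $\Sigma$ is exactly the all-or-nothing condition $a_\frakp\in\{0,n_\frakp\}$), and part (3) is immediate from (2) in both treatments. Your version is more self-contained but not materially different.
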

\begin{proof} Part (1), which is in fact equivalent to the hypothesis of the lemma, can be checked after base-change from $E$ to a finite extension over which $B$ acquires good reduction.  There it becomes  a well-known immediate consequence of the  Shimura--Taniyama formula \cite[(2.1.4.1)]{CCO}. Part (1) implies part (2) by \cite[Proposition 7.1]{kashio yoshida}.  Part (3) is implied by part (2).
\end{proof}

\medskip

The main theorem of Complex Multiplication attaches to $B$ a character 
$$\lambda\colon  E_{\A^{\infty}}^{\times}\to K^{\times}$$
such that \begin{gather}\label{cass1}
\text{for all $\tau\colon E\hookrightarrow \C$,}\qquad 
\lam^{\tau}:=\tau\circ \lambda \cdot (N_{\Sigma_{E}, \infty}^{\tau})^{-1}\colon E_{\A}^{\times}\to \C^{\times}\qquad \text{satisfies $\lam^{\tau}_{|E^{\times}}=1$},
\end{gather}
%is trivial on $E^{\times}$, and
\begin{gather}\label{cass2}
 \lambda(x)\lam(x)^{\rho}=|x|_{\A^{\infty}}^{-1}\qquad \text{for all } x \in E^{\times}_{\A^{\infty}}.
\end{gather}
Here $\rho$ is the complex conjugation in $K$ and $N_{\Sigma_{E}, \infty}^{\tau}\colon E_{\infty}^{\times}\to K_{\infty}^{\times}\to K_{\tau}^{\times}$ is the continuous extension of   $N_{\Sigma_{E}}$.
We say that $\lam^{\tau}$ is an  an algebraic Hecke character of infinity type $\Sigma_{E}^{\tau}$, where $\Sigma_{E}^{\tau}\subset \Hom(E, \C)$ is defined by  $N_{\Sigma_{E,\infty}}^{\tau}(x)=\prod_{\iota\in \Sigma_{E}^{\tau}}\iota(x)$ for all $x\in E_{\infty}^{\times}$.

%
%The main theorem of Complex Multiplication attaches to $B$ a character 
%$$\lambda=(\lam^{\tau})_{\tau\colon K\hookrightarrow\C}\colon E^{\times}\backslash  E_{\A}^{\times}\to (K\otimes\C)^{\times}$$
%satisfying 
%\begin{gather}\label{cass1}
%\lam^{\tau}|_{E_{\infty}^{\times}}=(N_{\Sigma_{E}, \infty}^{\tau})^{-1}\\
%\label{cass2}
%\text{for all } x \in{E_{\A^{\infty}}^{\times}, \quad \lam(x):= \lam^{\tau}(x)\in K^{\times}} \quad \text{and}\qquad \lambda(x)\lam(x)^{\rho}=|x|_{\A}^{-1}.
%\end{gather}
%Here $\rho$ is the complex conjugation in $K$ and $N_{\Sigma_{E}, \infty}^{\tau}\colon E_{\infty}^{\times}\to K_{\infty}^{\times}\to K_{\tau}^{\times}$ is the continuous extension of   $N_{\Sigma_{E}}$.
%We say that $\lam^{\tau}$ is an  an algebraic Hecke character of infinity type $\Sigma_{E}^{\tau}$, where $\Sigma_{E}^{\tau}\subset \Hom(E, \C)$ is defined by  $N_{\Sigma_{E,\infty}}^{\tau}(x)=\prod_{\iota\in \Sigma_{E}^{\tau}}\iota(x)$ for all $x\in E_{\infty}^{\times}$.
%  where $\Sigma_{E}$ is a CM type of $E$ extending the CM type $\Sigma^{*}$; 
%  the character $\lam$ satisfies the conditions  \cite[(1.12)-(1.13)]{Sh71} for the CM type $(E, \Sigma_{E})$. 
  
   The $L$-function of $B$ is  
\begin{align}\label{Blam}
%L(B, s+1/2)= L(s, \lambda^{\rm u})= (L(s,\lambda^{{\rm u},\tau}))_{\tau\in \Hom(K, \C)}\in K\otimes \C\cong \C^{\Hom(K, \C)};
L(B, s)= L(s, \lambda):= (L(s,\lambda^{\tau}))_{\tau\in \Hom(K, \C)}\in K\otimes \C\cong \C^{\Hom(K, \C)};
\end{align}
%here $\lam^{\tau}$ is the projection of $\lam$ to the place $\tau$ of $K$;   the
it satisfies a functional equation  with centre  at $s=1$.

Suppose now that $B=A_{E}$ for an abelian variety $A/F$ with $\End^{0}(A)=M$ (the maximal totally real subfield of $K$). Then $E=K^{*}F$ \cite[Remark 20.5]{Sh3}.

Suppose conversely that $\lam$ is a Hecke character of $E$ satisfying the conditions \eqref{cass1}, \eqref{cass2} for a CM type $(K, \Sigma)$. Then by  a theorem of Casselman (e.g.  \cite[Theorem 2.5.2]{CCO}), there is an abelian variety $B=B_{\lam}/E$, unique up to $E$-isogeny,  satisfying \eqref{Blam}. The abelian variety $B$ is simple if and only if the CM type $(K, \Sigma)$ is not induced from a CM type of a subfield of $K$.

\subsection{Theta lifts of Hecke characters}\label{sec: theta lift}
%Let $B$ be as in the Introduction and let $\lambda$ be the associated Hecke character; in particular, we assume that  \eqref{cc}, \eqref{root num}, and \eqref{ord} are satisfied. 
%In the following, for a Hecke character $\chi$ of $E$ we denote by $\chi^{*}:=\chi\circ c$, where $c\in {\rm Gal}(E/F)$ is the complex conjugation.  
Let $\overline{\Q}$ denote an algebraic closure of $K$,  let $\chi_{0}\colon E^{\times}\backslash E_{\A}^{\times}\to \overline{\Q}^{\times}$ be  finite order character, and let $\psi$ be a Hecke character of $E$ with the same CM type as the character $\lam$ from the Introduction. We suppose that  
%such thatf
%[[note : have changed the next equation (before was $\psi\chi0=\lam$]]
%\begin{align}\label{psi-def}
%\psi:=\lambda\chi_{0} 
%%\quad \text{or} \quad \psi:=\lambda\chi_{0}^{*}, [[aug3: needed?]]
%\end{align}
%which is a Hecke character satisfying the conditions  \eqref{cass1}, \eqref{cass2} for $K'$.
%\footnote{Note that the values of $\psi$ on $E_{\A^{\infty}}^{\times}$ could not be contained in a  proper subfield of $K$, since otherwise the common CM type $(K, \Sigma)$ attached to $\psi $ and $\lam$ would be induced from such subfield, contradicting the fact that   $\lam$ corresponds to a \emph{simple} CM abelian variety.}
%[[explain where are the values of $\psi$! We will fix a place $w$ of $K$ above $p$ and everything will have values in $K_{w}$?]]
%(The reader may find it helpful to make the simplifying assumption $\psi=\lam$, $\chi_{0}={\bf 1}$ on a first reading.)
%Note that those conditions and \eqref{def-psi} imply  that 
\begin{align}\label{cchar}
\chi_{0}|_{F_{\A}^{\times}}=\omega:=\eta\cdot \psi|_{F_{\A}^{\times}}\cdot |\,\,|_{\A_{F}}.
\end{align}
Let $K'\subset \overline{\Q}$ be a CM extension of $K$  containing the values of $\chi_{0}$ and $\psi|_{E^{\times}_{\A^{\infty}}}$.

\subsubsection*{The abelian variety associated with $\psi$}  
%As $\psi$ and $\lam$ differ by a finite order character, 
By construction, $\psi$ satisfies conditions  \eqref{cass1}, \eqref{cass2} for the CM type $(K',{\rm Inf}_{K'/K}\Sigma)$, and in particular it is associated with an abelian variety $B_{\psi, K'}/E$ of dimension $[K':\Q]/2$, which is uniquely determined up to $E$-isogeny,  has CM by $K'$, and is $K$-linearly isogenous to a sum of copies of a simple CM abelian variety. (Note that $B_{\psi, K'}$ depends on the choice of $K'$: if $K'\subset K''$ is a finite extension of CM fields of degree $d'$, then $B_{\psi, K''}\sim B_{\psi, K'}^{\oplus d'}$.) We denote by $B_{\psi}^{\natural}$ any one of the simple $E$-isogeny factors of $B_{\psi, K'}$.
%fixme: check this part.

On the other hand, the theta correspondence attaches to $\psi$ a $\Gal(M/\Q)$-conjugacy class (for some totally real field $M\subset K'$)  of cuspidal  automorphic representations $\sigma=(\sigma^{\tau_{M}})_{(\tau_{M}\in \Hom(M, \C))}$ of ${\rm Res}_{F/\Q}{\bf GL}_{2}$ of parallel weight $2$; namely $\sigma^{\tau_{M}}=\theta(\psi^{\tau'})$ if $\tau'\colon K'\hookrightarrow \C$ satisfies $\tau'|_{M}=\tau_{M}$.\footnote{In the terminology of \cite{YZZ, Di}, $\sigma$ is an \emph{$M$-rational} representation.} 

Let $A:=A_{\sigma}/F$ be the simple abelian variety associated with $\sigma=\theta(\psi)$, which  is determined uniquely up to $F$-isogeny. 
Suppose that 
\begin{align}\label{root n}
\varepsilon( 1/2, \sigma_{E}\otimes\chi_{0}^{-1})= -1
\end{align}
%[[This follows from 
%$$\varepsilon(1/2,\psi^{\rm u}\chi_{0}^{-1})\varepsilon(1/2,\psi^{{\rm u},*}\chi_{0}^{-1})=\varepsilon(1/2, \lam^{\rm u})\varepsilon(1/2, \lam^{\rm u} \chi_{0}^{*}\chi_{0}^{-1})=-1$$
%if $\chi_{0}=\chi_{1}$, and from a similar relation if $\chi_{0}=\chi_{1}^{*}$. ]]
 As discussed in the introductions to \cite{YZZ} or \cite{Di}, the condition \eqref{root n}
guarantees that $A$ can be found as an isogeny factor of the Jacobian of a Shimura curve over $F$; its endomorphism algebra   $\End^{0}A$ is a totally real field of dimension $d=\dim A$, which can be identified with $M$. 

 For any embedding $\tau_{M}\colon M\hookrightarrow \C$, we have $L(s, \sigma^{\tau_{M}})=L(s, \psi^{\tau'})$ for $\tau'\colon K \hookrightarrow \C$ such that $\tau'|_{M}=\tau_{M}$. 
% It follows that the $\C$-valued  $L$-function of $A$ is 
% $$L(A,s)=\prod_{\tau\colon K\hookrightarrow \C} L(s+1/2,\psi^{\tau}).$$
%Then $A$ acquires complex multiplication by $K$ over some finite extension of $F$; as remarked above, %fixme add internal ref
The following consequences of this identity are proved in \cite{Hi} (note that the CM type of $\psi^{\tau'}$ is $\Sigma_{E}$): (1) $A$ acquires complex multiplication by $K$ over some finite extension of $F$, and 
in fact, as remarked before, a minimal such extension is $K^{*}F=E$; (2) $A_{E}$ is isogenous to a sum of  copies of the abelian variety $B_{\psi}^{\natural}$ defined above.  

As $M$ is contained in the maximal real subfield of $K'$, the dimension $[M:\Q]$ of $A_{E}$ divides the dimension $[K':\Q]/2$ of $B_{\psi,K'}$; hence in fact  $B_{\psi, K'}$ is $K$-linearly isogenous to a sum of copies of $A_{E}$, i.e. for some $r\geq 1$, 
\beq\label{bpsi}
B_{\psi, K'}\sim A_{E}^{\oplus r}.\eeq

% By Casselman's theorem, the abelian varieties $A_{E}$ and $B_{\psi}$ are then  isogenous and will henceforth be identified.

\subsection{Rankin--Selberg $p$-adic $L$-function}  Moving to a more general context for this subsection only, let   $F$ be a totally real field, $E$ a CM quadratic extension of $F$. 
Let $M$ be a number field and let $\sigma$ be an $M$-rational automorphic representation of ${\rm Res}_{F/\Q}{\bf GL}_{2}$ of parallel weight $2$, with central character $\omega$. Let $M_{v}$ be a $p$-adic completion of $M$, let $L$ be a finite extension of $M_{v}$, and let 
$$\chi_{0}\colon  E^{\times}\backslash E_{\A}^{\times}\to L^{\times}$$
be a finite-order character  satisfying $\chi_{0}|_{F_{\A}^{\times}}=\omega$.
(We will later specialise to the situation 
 $\sigma=\theta(\psi)$ considered in \S\ref{sec: theta lift}).    We recall the definition of a $p$-adic Rankin--Selberg $L$-function on $\mathscr{Y}_{L}$ attached to  the base-change  $\sigma_{E}$ of $\sigma$ to $E$ twisted by $\chi_{0}^{-1}$.
% under two different sets of assumptions. 
% We begin with the `standard' one.

%(The field $M$ is then the Hecke field of $\sigma$; see \cite{Di} for the exact definition of the notion of $M$-rational.)

Given a place $\wp\vert p$ of $F$, we say that $\sigma$ is \emph{nearly ordinary at $\wp$} with unit characters $\alpha_{\wp}$ if, after possibly enlarging $L$, there exist  
%for each place $\wp $ of $F$ above $p$,
   characters $\alpha_{\wp}\colon F_{\wp}^{\times}\to \mathscr{O}_{L}^{\times}$, such that
%  such that $\alpha(\varpi_{\wp})$
%   valued in $v'$-adic units 
$\sigma_{\wp}\otimes L$ is either special $\alpha_{\wp}\cdot{\rm St}$ with character $\alpha_{\wp}$, or irreducible principal series ${\rm Ind}(|\cdot|_{\wp}\alpha_{\wp},\beta_{\wp})$ (un-normalised induction) for some other character $\beta_{\wp}$. 

%Suppose that this  holds, let $\omega_{\sigma}$ be the central character of $\sigma$, and let  $\chi_{0}$ be a fixed finite order character of $E^{\times}\backslash E_{\A^{\infty}}^{\times}$ such that $\chi_{0}|_{F_{\A^{\infty}}^{\times}}=\omega_{\sigma}$. 

   \begin{thm}  \label{stdLp}  Suppose  that for all $\wp\vert p$, $\sigma$ is nearly ordinary at $\wp$.  Then there is a function 
$$L_{p}(\sigma_{E}\otimes \chi_{0}^{-1}) \in \Lambda$$
%=\mathscr{O}(\mathscr{Y})^{\rm b} $$
characterised by the interpolation property
%$$L_{p}(\sigma_{E}\otimes \chi_{0}^{-1}) (\chi') = {1\over \Omega_{\sigma}^{\iota}} L(1/2,\sigma_{E}^{\iota}\otimes\chi_{0}^{\iota,-1}\chi'^{\iota}) \cdot e_{p}(\sigma_{E}^{\iota}\chi_{0}^{\iota,-1}\chi'^{\iota})$$
$$L_{p}(\sigma_{E}\otimes \chi_{0}^{-1}) (\chi') =   e_{p}(\sigma_{E}^{\iota}\otimes(\chi_{0}\chi')^{\iota, -1})
\cdot{ L^{(p)}(1/2,\sigma_{E}^{\iota}\otimes(\chi_{0}\chi')^{\iota, -1})  \over \Omega_{\sigma}^{\iota}} $$
%changed \chi' into \chi'^{-1}! to match with the inversion occurring in interpol of Katz pLF.
for all sufficiently $p$-ramified\footnote{See the proof for the precise meaning.} finite order characters
$\chi'\colon \Gamma \to \baar{\Q}^{\times}$ and $\iota\colon \baar{\Q}\hookrightarrow\C$.
Here $\Omega_{\sigma}^{\iota}:= L(1, \sigma^{\iota}, {\rm ad})$ and $e_{p}(\sigma_{E}^{\iota}\otimes (\chi_{0}\chi')^{\iota,-1})=\prod_{\wp \vert p}e_{\wp}(\sigma_{E}^{\iota}\otimes (\chi_{0}\chi')^{\iota,-1})$ with 
\beq\label{eps-fact}
e_{\wp}(\sigma_{E}^{\iota}\otimes (\chi_{0}\chi')^{\iota,-1})=
\varepsilon(0, \iota(\alpha_{\wp}^{-1}\chi_{0,\frakp}\chi'_{\frakp}))\cdot
\varepsilon(0,\iota( \alpha_{\wp}^{-1}\chi_{0,\frakp^{c}}\chi'_{\frakp^{c}})).
\eeq
\end{thm}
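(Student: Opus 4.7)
The plan is to follow the standard Hida--Perrin-Riou recipe for constructing a Rankin--Selberg $p$-adic $L$-function of $\GL_{2}/F\times \GL_{1}/E$ type for nearly ordinary $\sigma$; the construction in our precise setting is essentially due to Hsieh \cite{Hs1} (in the anticyclotomic variable) and to the second author \cite{Di}, and I would ultimately invoke those results directly after matching conventions. The first step is to recall the Waldspurger--Ichino toric integral formula, which for each $\iota$ and each finite-order $\chi'$ rewrites
\[
{L(1/2, \sigma_{E}^{\iota}\otimes (\chi_{0}\chi')^{\iota,-1})\over \Omega_{\sigma}^{\iota}}
\]
as a product of a global toric period of a chosen test vector $\phi\in\sigma$ against $(\chi_{0}\chi')^{\iota}$ --- taken over a CM torus $E^{\times}\backslash E_{\A}^{\times}$ inside a quaternion algebra $B/F$ whose ramification set is dictated by the local signs of the functional equation --- and a product of explicit local factors $Q_{v}$.

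At finite places $v\nmid p$, I would choose the classical Gross--Prasad/YZZ optimal test vectors, so that the $Q_{v}$ are independent of $\chi'$ and their aggregate reconstructs $L^{(p)}(1/2, \sigma_{E}^{\iota}\otimes (\chi_{0}\chi')^{\iota,-1})$. At a prime $\wp\mid p$, the near-ordinary hypothesis produces a canonical $\alpha_{\wp}$-unit-root line in $\sigma_{\wp}\otimes L$; I would take the test vector to lie on this line and be of sufficiently deep level at $\wp$. A direct local computation of the toric integral against $\chi_{0,\wp}\chi'_{\wp}$, assumed of larger $\wp$-conductor than the test vector (this is the meaning of ``sufficiently $p$-ramified''), then yields exactly the factor $e_{\wp}(\sigma_{E}^{\iota}\otimes(\chi_{0}\chi')^{\iota,-1})$ of \eqref{eps-fact}, by the standard manipulation that rewrites a ramified principal-series matrix coefficient integral as a product of split-character epsilon factors.

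The passage to a $\Lambda$-valued function is then obtained by the usual geometric realisation. The vector $\phi$ has a $p$-adic avatar as a nearly ordinary $p$-adic Hilbert modular form on a Shimura curve (or on a definite quaternionic Shimura set, according to the sign pattern) over $F$; with the choices above, the toric period becomes the evaluation of this avatar at a family of CM points, paired against $(\chi_{0}\chi_{\rm univ})^{-1}$. Shimura reciprocity makes this pairing $E_{\A^{\infty}}^{\times}$-equivariant, so it extends continuously over the tautological character of $\Gamma$ and yields an element of $\Lambda$; specialising at a finite-order $\chi'$ in the (Zariski-dense) interpolation range recovers the displayed formula.

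The main obstacle is the precise local calculation at $\wp\mid p$: both the Steinberg and the principal-series subcases must be treated, the dependence on the unit character $\alpha_{\wp}$ (versus the potentially non-unit $\beta_{\wp}$) must be tracked through the choice of the nearly ordinary line, and one must identify the output as $\varepsilon(0,\alpha_{\wp}^{-1}\chi_{0,\frakp}\chi'_{\frakp})\cdot\varepsilon(0,\alpha_{\wp}^{-1}\chi_{0,\frakp^{c}}\chi'_{\frakp^{c}})$ as the ramification of $\chi'_{\wp}$ grows. These computations are in \cite{Hs1} and \cite{Di}; the boundedness of the resulting family --- so that one lands in $\Lambda$ rather than a larger space of continuous functions on $\Y$ --- is a consequence of Hida's control theorem for the nearly ordinary Hecke algebra.
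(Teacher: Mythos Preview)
Your proposal is correct and ultimately rests on the same source as the paper: both invoke \cite[Theorem A]{Di}, and your sketch is an accurate expository outline of the construction behind that result. The paper's proof is far more laconic --- it simply cites \cite{Di} and records the convention dictionary (the involution $\chi'\mapsto\chi'^{-1}$, the shift by $\chi_{0}^{-1}$, that ``sufficiently $p$-ramified'' means the local $L$-values in the interpolation formula of \cite{Di} become~$1$, and that the Gau\ss\ sums there match the epsilon factors \eqref{eps-fact} via \cite[(23.6.2)]{BH}).
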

%All local epsilon factors in this paper are understood with respect to some  uniform  choice of additive characters of $F_{\wp}$ of level one for all $\wp\vert p$.
\begin{proof}
   This is essentially \cite[Theorem A]{Di}.
%The expression of the interpolation factor $e_{p}$ will be recalled in the proof of Lemma \ref{p  factor} below. 
Our $L_{p}(\sigma_{E}\otimes\chi_{0}^{-1})$ differs from the one of \emph{loc. cit.} by the involution $\chi'\mapsto \chi'^{-1}$, the shift $\chi_{0}^{-1}$, and some algebraic constants. Regarding the interpolation factors, note that if $\chi'_{\frakp}$, $\chi'_{\frakp^{c}}$ are sufficiently ramified then all local $L$-values in the interpolation formula from \cite{Di} are equal to $1$. The relation between  the Gau\ss\  sums used in \cite{Di} and our epsilon factors \eqref{eps-fact} follows from e.g. \cite[(23.6.2)]{BH}.
\end{proof}

\subsection{Katz $p$-adic $L$-function}\label{PLF}   From now until the end of this paper, let  $K$, $E$, $\Sigma_{E}$ be  as in \S\ref{sec: cm}.  Fix a $p$-adic  place $w$ of $K$, and denote by  $v$ be the induced place of the maximal real subfield $M\subset K$. We  let  $\overline{\Q}\subset\overline{\Q}_{p}\supset K_{w}$ be algebraic closures of $K$ and of $K_{w}$,  and let   $\C_{p}$ be the completion of $\overline{\Q}_{p}$.  We also let $L$ be a sufficiently large   finite extension of $K_{w}$ inside $\overline{\Q}_{p}$ as in the introduction.

\subsubsection{$p$-adic CM type associated with $w$}
Let
 $ N_{\Sigma_{E}, p}^{(w)}\colon E_{p}^{\times}\to K_{p}^{\times}\to K_{w}^{\times}$ be the continuous extension of $N_{\Sigma_{E}}$. We let 
$$\Sigma_{E}^{(w)}:=\{\frakp|p \text{ prime of } E\, : \, |\cdot|_{w}\circ N_{\Sigma_{E},p}^{(w)}|_{E^{\times}_{\frakp}} \text{ is a non-trivial norm on $E_{\frakp}^{\times}$}\},$$
which, by Lemma \ref{K*} and the construction of $N_{\Sigma_{E}}$,  has the property that $\Sigma_{E}^{(w)}\sqcup \Sigma_{E}^{(w)}c$ is the set of all primes $\frakp$ of $E$ above $p$. Hence $\Sigma_{E}^{(w)}$ is a  $p$-adic CM type of $E$, identified with  a set of embeddings $E\hookrightarrow \overline{\Q}_{p}$.

%Let $E$ be a CM field and $C$ a valued-field extension of $\Q_{p}$. Let $\Sigma_{E}$ be a $p$-adic CM type of $E$ with values in $C$. We say that a character
%$$\lam'\colon E^{\times}\backslash E_{\A}^{\times}\to C^{\times}$$
%is a \emph{$p$-adic Hecke character of infinity type $\Sigma_{E}$ }

% Recall Katz [[Ashay; refer to lemma \ref{split} for condition on CM type]] and Rankin--Selberg $p$-adic $L$-functions and their interpolation property.
 
% \begin{lm}\label{split} Every prime of $F$ above $p$ splits in $E$, and for each $\iota_{p}\colon E\hookrightarrow \overline{\Q}_{p}$, the CM type $\Sigma_{E}(\iota_{p}):={\rm Inf}_{E/K^{*}}\Sigma^{*}(\iota_{p})$ is a $p$-adic CM type of $E$. 
 \begin{lm}\label{split} Every prime of $F$ above $p$ splits in $E$, and  the CM type $\Sigma_{E}^{(w)}$ is a $p$-adic CM type of $E$. 
\end{lm}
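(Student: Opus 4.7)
The plan is to leverage two ingredients: Lemma \ref{K*}(2), which gives that $p$ is totally split in the reflex field $K^* \subset E$, and the identification $E = K^* F$ which holds under the running hypothesis \eqref{cc}.

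First I would observe that \eqref{cc} places us in the setting of \S\ref{sec: theta lift} with $\psi = \lambda$: the theta correspondence associates to $\lambda$ a real-multiplication abelian variety $A/F$ with $\End^{0}(A) = M \subset K$, whose base-change $A_{E}$ is $K$-linearly isogenous to a power of $B$ (cf. \eqref{bpsi}). By the discussion in \S\ref{sec: cm}, since $A$ acquires full CM by $K$ exactly upon base-change to $E$, we obtain $E = K^{*} F$.

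Next I would work locally. Fix a prime $\wp$ of $F$ above $p$ and a prime $\frakp$ of $E$ above $\wp$, and let $\frakp^{*} := \frakp \cap K^{*}$. Fixing an embedding $E \hookrightarrow \overline{\Q}_{p}$ realizing $\frakp$, the equality $E = K^{*} F$ passes to closures inside $\overline{\Q}_{p}$, giving the identification
\[
E_{\frakp} = K^{*}_{\frakp^{*}} \cdot F_{\wp}.
\]
By Lemma \ref{K*}(2), $K^{*}_{\frakp^{*}} = \Q_{p} \subset F_{\wp}$, so $E_{\frakp} = F_{\wp}$; equivalently $[E_{\frakp} : F_{\wp}] = 1$, meaning $\wp$ splits in $E$.

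The second assertion is then essentially set-theoretic. Splitting of every $\wp \mid p$ in $F$ ensures that complex conjugation acts freely on the set of primes of $E$ above $p$, partitioning them into pairs $\{\frakp, \frakp^{c}\}$. The property $\Sigma_{E}^{(w)} \sqcup \Sigma_{E}^{(w)} c = \{\text{all primes of } E \text{ above } p\}$, stated just before the lemma, then forces $\Sigma_{E}^{(w)}$ to contain exactly one prime from each such pair, which is precisely the definition of a $p$-adic CM type of $E$.

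The step I expect to be most delicate is the first: one must check that the $K$-linear isogeny $B_{\psi, K'} \sim A_{E}^{\oplus r}$, which a priori involves the auxiliary coefficient field $K'$, really yields $E = K^{*} F$ for the original $B$. This ultimately rests on the fact that the reflex field and reflex CM type depend only on the CM type $(K, \Sigma)$ and are isogeny-invariant, so the relation is inherited from the corresponding statement for $A_{E}$.
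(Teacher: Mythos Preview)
Your argument is correct and follows the same skeleton as the paper's proof (reduce the first assertion to Lemma \ref{K*}(2) via the identity $E=K^{*}F$, then handle the second assertion separately). Two remarks:

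\medskip

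\textbf{The detour through \eqref{cc} and theta lifts is unnecessary.} The equality $E=K^{*}F$ holds automatically in this setting and does not require \eqref{cc}. Indeed, $E$ is a CM field with maximal totally real subfield $F$, so $[E:F]=2$; the reflex field $K^{*}\subset E$ is itself CM, hence not contained in $F$. Thus $F\subsetneq K^{*}F\subset E$, and comparing degrees gives $K^{*}F=E$. This dissolves the ``delicate step'' you flag at the end: there is no need to pass through $A$, $B_{\psi,K'}$, or isogeny invariance at all. The paper's one-line proof (``the first assertion is part (2) of Lemma \ref{K*}'') is implicitly using exactly this elementary observation.

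\medskip

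\textbf{The second assertion: a genuinely different route.} For the $p$-adic CM type property, the paper instead identifies $\Sigma_{E}^{(w)}={\rm Inf}_{E/K^{*}}\Sigma^{*}(\iota_{p})$ for a suitable $\iota_{p}$ and then invokes Lemma \ref{K*}(3), which says $\Sigma^{*}(\iota_{p})$ is a $p$-adic CM type of $K^{*}$; inflating along $E=K^{*}F$ preserves this property once the first assertion is known. Your approach---reading the conclusion directly off the disjoint-union decomposition $\Sigma_{E}^{(w)}\sqcup \Sigma_{E}^{(w)}c$ stated just before the lemma---is equally valid and arguably more direct, since that decomposition together with the freeness of the $c$-action (which is exactly the first assertion) is already the definition. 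The paper's route has the mild advantage of making the link to the reflex type explicit, which is used elsewhere.
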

\begin{proof} The first  assertion is part (2)  of Lemma \ref{K*}. The second assertion follows from part (3) of the same lemma, after noting that $\Sigma_{E}^{(w)}={\rm Inf}_{E/K^{*}}\Sigma^{*}(\iota_{p})$ for any $\iota_{p}\colon E\hookrightarrow \overline{\Q}_{p}$ inducing a prime in $\Sigma_E$.
\end{proof}

\subsubsection{$p$-adic Hecke characters}
%[[Define p-adic arithmetic Hecke character of given infinity type. Then verify that the following has infinity type $\Sigma_{E,p}
%^{(w)}$. ]]
Assume that $L$ splits $E$, and
%and let Ê  
let $\chi \colon E^{\times}\backslash E_{\A^{\infty}}^{\times}\to L^{\times}$ be a locally algebraic $p$-adic character. 
We say that $\chi$ is a Hecke character of $p$-adic infinity type $k \in \Z[\Hom(E,L)]$
%$k\Sigma_{E,p}^{(w)}+ \kappa(1-c)$ with $\kappa \in \Z_{\geq 0}[\Sigma_{E,p}^{(w)}]$ 
if there exists an open subgroup $U \subset E_{\A^{\infty}}^{\times}$ 
such that 
$$
\chi(t)=\prod_{\sigma \in \Hom(E,L)}t_{\sigma}^{-k_{\sigma}}
$$
for $t \in U$. Note that this definition differs from the one in some of  the literature  (e.g. \cite{HT} and \cite{Hi3}) by a sign in the exponents; it agrees with \cite{BDP}.

If $\chi$ is a locally algebraic character of $p$-adic infinity type $k$ and  $\iota\colon L \rightarrow \C$ is an embedding, we can define the $\iota$-avatar $\chi^{\iota}\colon E_{\A}^{\times}\rightarrow \C^{\times}$ similarly to  \cite[Def. 1.5]{LZZ}. The embedding $\iota $ induces an isomorphism $\Hom(E, L)\to \Hom(E, \C)$ by $\sigma \mapsto \iota\circ \sigma $; the infinity type $k^{\iota}\in \Z[\Hom(E, \C)]$  of $\chi^{\iota} $ corresponds to $k$ under this bijection.
%is then $\iota^{*}k \in \Z[\Hom(E, \C]$  where $\iota^{*}k_{\sigma}=k_{\iota^{-1}\circ \sigma}$ for all $\sigma \in\Hom(E, \C)$. 
%In particular, a $p$-adic infinity type $k \in \Z[\Hom(E,\C_{p})]$ 
%corresponds to the complex infinity type $k \in \Z[\Hom(E,\C)]$.
The association $\chi\to \chi^{\iota}$ defines   a bijection between locally algebraic Hecke characters over $E$ with values in $L$ and 
arithmetic Hecke characters over $E$ in the usual sense.  

\medskip
%
%We continue to use the same notation as above. Let 
%$$\lam^{(w)}\colon E^{\times}\backslash E_{\A^{\infty}}^{\times}\to K_{w}^{\times}$$
%be the $p$-adic avatar defined by 
%$$\lam^{(w)}(x):= N_{\Sigma_{E}, p}^{(w)}(x_{p})^{-1}\lam(x).$$ 
%It is a locally algebraic character of $p$-adic infinity type  $\Sigma_{E}^{(w)}$.\footnote{Strictly speaking this assertion holds after considering $\lam^{(w)}$ as valued in some extension $L\supset K_{w}$ splitting $E$.}

For example,  if $\lambda $ is as in \S\ref{sec: cm}, the character 
$$\lam^{(w)}(x):=\lambda(x)N_{\Sigma_{E}, p}^{(w)}(x_{p})^{-1} \colon   E^{\times}\backslash E_{\A^{\infty}}^{\times}\to K_{w}^{\times}$$
%be the $p$-adic avatar defined by 
%$$\lam^{(w)}(x):= N_{\Sigma_{E}, p}^{(w)}(x_{p})^{-1}\lam(x),$$ 
%where $\lambda$ is as above. 
is a Hecke  character of $p$-adic infinity type  $\Sigma_{E}^{(w)}$.\footnote{Strictly speaking this assertion holds after considering $\lam^{(w)}$ as valued in some extension $L\supset K_{w}$ splitting $E$.} The place $w$ being fixed, in the rest of this paper we will often abuse notation by simply writing $\lambda$ in place of $\lambda^{(w)}$.
%From now, we fix an embedding $\iota_{p}$ of $E$ in $\overline{\Q}_{p}$.  

%Let $\frak{p}|p$ be the corresponding prime of $E$ unless otherwise stated. 
%Moreover, $\Sigma_{E}(\iota_{p})$ is a $p$-adic CM type which we fix.
% arising from the embedding. 
\subsubsection{Katz $p$-adic $L$-function} 
Let now $L$ be an extension of $K_{w}$ splitting $E$, and let $\Sigma_{E}$ be a $p$-adic CM type of $E$ over some $L$.
Let $E_{\infty}^{\sharp}$ be a finite extension of $E_{\infty}$ contained in $E^{\rm ab}$, and let $\Gamma^{\sharp}:=\Gal(E_{\infty}^{\sharp}/E)$. 
% $\pi \colon \Gamma^{\sharp} \twoheadrightarrow \Gamma$ be a finite cover arising from 
%a finite extension of $E_{\infty}$ contained in $E^{\rm ab}$. 
%This corresponds to fixing a tame level prime to $p$ and we exclude the tame level from the notation for simplicity. 
Let $\Lambda^\sharp:=\Z_p\llb \Gamma^\sharp\rrb_L$, $\Y^\sharp:= \Spec \Lambda^\sharp$.

% with fixed tame level prime to $p$. 
By \cite{Ka, HT} and the first assertion of Lemma \ref{split}, there is an element 
$$L_{\Sigma_{E}}\in\Lambda^\sharp$$
% \mathscr{O}(\mathscr{Y}^{\sharp})^{\rm b},$$
%$$L_{\Sigma_{E}}(\lam^{(w)})\in \mathscr{O}_{K_{w}}\llb \Gamma\rrb_{K_{w}}=\mathscr{O}(\mathscr Y)^{\rm b},$$
%where the last term denotes bounded rigid analytic functions on $\mathscr{Y}^{\sharp}$,
 uniquely characterised by the interpolation property that we now describe.   
The domain of interpolation consists of 
%$p$-adic  ($w$-adic) algebraic 
locally algebraic $p$-adic Hecke characters
%$\chi'$ of $p$-adic infinity type $k\Sigma_{E,p}^{(w)}+ \kappa(1-c)$ 
$\lam'\colon \Gamma^{\sharp}\to \overline{\Q}_{p}^{\times}$ with infinity type 
$$k\Sigma_{E}^{(w)}+ \kappa(1-c)$$
for $k\in \Z$,  $\kappa \in \Z[\Sigma_{E}^{(w)}]$ such that 
\begin{itemize}
\item[(i)] $k \geq 1$,  or 
\item[(ii)] $k \leq 1$ and $k\Sigma_{E}^{(w)}+ \kappa \in \Z_{> 0}[\Sigma_{E}^{(w)}]$. 
\end{itemize}
%(i). $k \geq 0$ or (ii). $k \leq 1$ and $k\Sigma_{E}+ \kappa \in \Z_{\geq 0}[\Sigma_{E}]$.
The interpolation property is then the  following. 
There exist explicit $p$-adic periods
%$\Omega_{w}\in K_{w}^{\times}$, $\Omega_{\infty}=(\Omega_{\iota})_{\iota\colon K \hookrightarrow \C}\in (K\otimes\C)^{\times}$,  
%$\Omega_{p,\Sigma_{E}}=(\Omega_{p,\Sigma})_{\Sigma}\in (\bar{\Z}_{p}^{\times})^{\Sigma_{E}}$
$$\Omega_{\Sigma_{E}}=(\Omega_{\Sigma_{E},\frakp})_{\frakp\in \Sigma_{E}}\in (\bar{\Z}_{p}^{\times})^{\Sigma_{E}}$$
and, for each complex CM type $\Sigma_{E, \infty}$ of $E$, 
 complex periods  $$\Omega_{\Sigma_{E, \infty}}=(\Omega_{ \Sigma_{E, \infty},\tau})_{\tau\in\Sigma_{E, \infty}}\in (\C^{\times})^{\Sigma_{E, \infty}}$$
(both defined in \cite[(4.4)]{HT}) 
 such that  for any 
%finite order 
character $\lam'\colon \Gamma\to \overline{\Q}_{p}^{\times}$ in the domain of interpolation, 
%character o $\chi'\colon \Gamma\to \overline{\Q}_{p}^{\times}$ in the domain
and any $\iota\colon L(\lambda')\hookrightarrow \C$,
%the two sides of the following identity belong to $\iota (K_{w}(\lambda'))$ and are equal:
we have\footnote{Note that we are ignoring interpolation factors at places away from $p$ appearing elsewhere in the literature, since those, while non-integral, can be interpolated by polynomial functions on $\mathscr{Y}^{\sharp}$.}
% \begin{align*}
%\iota\left(\frac{L_{ \Sigma_{E}}(\lam')}{\Omega_{ \Sigma_{E}^{(w)}}^{k+2\kappa}}\right)=
%e_p((\lam')^{-1})^{\iota})
%\cdot {L(1/2,((\lam')^{-1})^{{\rm u},\iota}) \over \Omega_{\Sigma_{E,\iota}, }^{k+2\kappa}}
%%\frac{1}{\Omega_{p}^{k+1+2\kappa}}L_{ \Sigma_{E}}(\lam^{(w)})(\chi')={1\over \Omega_{\infty}^{k+1+2\kappa}}(L(1/2,((\lam\chi')^{-1})^{{\rm u},\iota})\cdot e_p(((\lam\chi')^{-1})^{\iota})
%%Eul^{(p)}(\lam^{\iota}\chi'^{\iota})
%&\cdot \frac{\pi^{\kappa}\Gamma_\Sigma(k\Sigma_{E}+\kappa)}{(\Im \skewhf)^\kappa}\cdot {[\mathscr{O}_{E}^{\times}:\mathscr{O}_{F}^{\times}]\over \sqrt{|D_{F}|}}
%%\sqrt{\abs{D_F}_\R}\cdot \Omega_\infty^{k\Sigma+2\kappa}}
%%\cdot[\OO_K^\x:\OO_F^\x],
%\end{align*} 
% [[aug3: preferable to write in terms of values at $0$, removing unitarisation. [todo] ]]
%[[rewrite/ \cf BDP]]
\begin{align*}
\iota\left(\frac{L_{ \Sigma_{E}}(\lam')}{\Omega_{ \Sigma_{E}}^{k+2\kappa}}\right)=
e_p((\lam')^{-1})^{\iota})
\cdot {L^{(p)}(0,((\lam')^{-1})^{\iota}) \over \Omega_{\Sigma_{E,\iota}, }^{k+2\kappa}}
%\frac{1}{\Omega_{p}^{k+1+2\kappa}}L_{ \Sigma_{E}}(\lam^{(w)})(\chi')={1\over \Omega_{\infty}^{k+1+2\kappa}}(L(1/2,((\lam\chi')^{-1})^{{\rm u},\iota})\cdot e_p(((\lam\chi')^{-1})^{\iota})
%Eul^{(p)}(\lam^{\iota}\chi'^{\iota})
&\cdot \frac{\pi^{\kappa}\Gamma_\Sigma(k\Sigma_{E, \iota}+\kappa)}{(\Im \skewhf)^\kappa}\cdot {[\mathscr{O}_{E}^{\times}:\mathscr{O}_{F}^{\times}]\over \sqrt{|D_{F}|}}
%\sqrt{\abs{D_F}_\R}\cdot \Omega_\infty^{k\Sigma+2\kappa}}
%\cdot[\OO_K^\x:\OO_F^\x],
\end{align*}

In the interpolation formula,  $\Sigma_{E,\iota}$ is the complex CM type induced from $\Sigma_{E}^{(w)}$ via $\iota$,  and we then identify $\kappa$ with $\kappa^{\iota} \in \Z_{\geq 0}[\Sigma_{E, \iota}]$; when $k\in \Z$ appears in the exponent of one of the periods $\Omega_{\Sigma}$ it is considered as $k\cdot\sum_{\tau\in \Sigma_{E}^{\iota}}\tau$.
If $\chi=(\lam')^{-1}$ is ramified at all $\frakp\vert p$, the $p$-Euler factor is given by 
$$
e_{p}(\chi^{\iota})=\prod_{\frakp \in \Sigma^{(w)}_{E}}e(\chi_{\frakp}^{\iota})
%e_{p}(\lam^{\iota}\chi'^{\iota})=\prod_{\frakp \in \Sigma^{(w)}_{E,p}}e((\lam_{\frakp}\chi'_{\frakp})^{\iota})
%Eul_{\mathfrak{C}^{+}}(\lam^{\iota}\chi'^{\iota})= \prod_{v|\mathfrak{C}^{+}}Eul(\lam_{v}\chi'_{v})
$$
for (dropping all superscripts $\iota$)
%$$
%e(\chi_{\frakp})=
%%\lam_{w}(\vartheta_{w})\cdot \frac{1}{\epsilon(1/2,\lam_{w}\chi'_{w},\psi)L(1/2,\lam_{w^{c}}\chi'_{w^{c}})L(1/2,(\lam_{w}\chi'_{w})^{-1})}
%\big{(}
%%N_{E/\Q}(\frakp^{-r})
%\tau(\chi_{\frakp},\psi_{\frakp})\big{)}
%\cdot \big{(} (1-\chi(\frakp^{c}))(1-\chi^{D}(\frakp^{c})) \big{)}.
%%\tau(\lam_{\frakp}\chi'_{\frakp},\psi_{\frakp})\big{)}
%%\cdot \big{(} (1-\lam\chi'(\frakp^{c}))(1-(\lam\chi')^{D}(\frakp^{c})) \big{)}.
%$$
%[[replacing with the following:]]
\beq\label{efrakp}
e(\chi_{\frakp}) =
% \lam_{\frakp}(2\vartheta_{\frakp})
{L(0, \chi_{\frakp}) \over
\varepsilon (0, \chi_{\frakp})L(1, \chi_{\frakp}^{-1})}  
\eeq
%[[clean the rest below. march 13]]
%and 
%$$
%Eul(\lam_{v}\chi_{v}')=\lam_{v}(\vartheta_{v})\cdot \frac{L(1/2,\lam_{v}\chi'_{v})}{\epsilon(1/2,\lam_{v}\chi'_{v},\psi)L(1/2,(\lam_{v}\chi'_{v})^{-1})}.
%$$
%Here $r$ is the exponent of $\frakp$ in the conductor of $\lam\chi'$, %we remove r for consistency with the normalisation below.
%Here $\tau$ is the corresponding local Gauss\  sum (un-normalised as in \cite[Theorem A]{Di}), and 
%$\chi^{D}$ the dual character given by 
%%$(\lam\chi')^{D}$ the dual character given by 
%$\chi^{D}(x)= \chi(x^{c})^{-1}|x|_{E_{\A}}$ (see \cite[Thm. II]{HT}). 
%$(\lam\chi')^{D}(x)= \lam\chi'(x^{c})^{-1}|x|_{E_{\A}}$ (see \cite[Thm. II]{HT}). 
%The Gauss\ sums depend on  choices of additive characters of $F_{\frakp}$ 
%[[conflict of notation with the  character $\psi$! todo]]
%which we fix (cf. the proof of Lemma \ref{p factor} below for our choice.)
%[[For an explicit description of the Euler factor $Eul^{(p)}(\lam^{\tau}\chi'^{\tau})$, we refer to \cite[Thm. II]{HT}. 
%Here we only remark that it is algebraic and independent of $\chi'$.]]
Finally, $\Gamma_{\Sigma}(k\Sigma_{E, \iota}+ \kappa)=\prod_{\tau \in \Sigma_{E, \iota}}\Gamma(k+\kappa_{\tau})$ 
for the usual $\Gamma$-function, 
and 
$\vartheta\in E $ as in \cite[\S3.1]{Hs1}.
%and 
%$\epsilon(1/2,\lam_{v}\chi'_{v},\psi)$ the corresponding local $\epsilon$-factor at $v$. 
%For explicit description of the local $\epsilon$-factor, we refer to \cite[Prop. 3.7]{MS} and \cite[Lem. 6.6]{Hs1}. 
%Recall that the local L-factor of a Hecke character $\chi$ at $v$ equals $1$ unless $\chi$ is unramified and otherwise 
%$$
%L(1/2,\chi_{v})=\frac{1}{1-\chi_{v}(\varpi_{v})}
%$$
%for corresponding uniformiser $\varpi_v$.
%We view the Euler factors as elements of $\overline{\Q}_{p}$ via the embedding $\tau$.
All local epsilon factors in this paper are understood with respect to some  uniform  choice of additive characters of $E_{\frakp}$ of level one for all $\frakp\vert p$.

For a later consideration, we fix a sufficiently large extension $E^{\sharp}$  as above and consider the restriction of $L_{\Sigma_{E}}$ to certain open subsets of $\mathscr{Y}^{\sharp}$:
%We will also consider Katz $p$-adic L-function arising from the CM type $\Sigma_{E}c$ and its restriction. 
if $\lambda_{0}$ is a $p$-adic Hecke character factoring through $\Gamma^{\sharp}$ with values in $L$,  and $\Sigma_{E}$ is a $p$-adic CM type, we define
$$L_{\Sigma_{E}, \lambda_{0}}\in \Lambda$$
by 
$$L_{\Sigma_{E}, \lambda_{0}}(\chi'):= L_{\Sigma_{E}}(\lambda_{0}\chi').$$

%[[aug3 replaced with the above]]
%For the finite order character $\chi_{0}\colon E^{\times}\backslash E_{\A}^{\times}\to \overline{\Q}^{\times}$, the Hecke character $\lam^{*}\chi_{0}^{*}\chi_{0}^{-1}$ has infinity type $\Sigma_{E} c$ (where $c$ is the complex conjugation of $E$). We consider a tame level divisible by the prime to $p$ conductors of $\lam$ and $\lam^{*}\chi_{0}^{*}\chi_{0}^{-1}$. 
%Let 
%$$L_{\Sigma_{E},\lam^{(w)}}\in \mathscr{O}_{K_{w}}\llb \Gamma\rrb_{K_{w}}=\mathscr{O}(\mathscr Y)^{\rm b}$$ 
%be such that $L_{\Sigma_{E},\lam^{(w)}}(\chi')=L_{\Sigma_{E}}(\lam\chi')$ for $\chi'\colon \Gamma\to \overline{\Q}_{p}^{\times}$ with $\lam\chi'$ in the interpolation region. 
%We analogously define
%$$L_{\Sigma_{E}c,\lam^{*}\chi_{0}^{*}\chi_{0}^{-1}}\in \mathscr{O}(\mathscr{Y})_{L}^{\rm b},$$
%where $L$ is the finite extension of $K_{w}$ generated by the values of $\chi_{0}$. 

%We will henceforth drop the superscript $w$ from the notation. 
%For the finite order character $\chi_{0}\colon E^{\times}\backslash E_{\A}^{\times}\to \overline{\Q}^{\times}$, the same construction  applied to the Hecke character $\lam^{*}\chi_{0}^{*}\chi_{0}^{-1}$ of infinity type %$N_{\Sigma_{E}} \circ c$ (where $c$ is the complex conjugation of $E$) yields a Katz $p$-adic $L$-function
%$$L_{\Sigma_{E}c}(\lam^{*}\chi_{0}^{*}\chi_{0}^{-1})\in \mathscr{O}(\mathscr{Y})_{L}^{\rm b},$$
% where $L$ is the finite extension of $K_{w}$ generated by the values of $\chi_{0}$. It interpolates the values $L(\lam^{*}\chi_{0}^{*}\chi_{0}^{-1}\chi')$ for finite order $\chi'\in \mathscr{Y}'$.

We will henceforth drop the superscript $w$ from the notation for the character $\lambda^{(w)}$. 
%In the following, the $p$-adic coefficient field will be understood to be  $L$, e.g. $\mathscr{Y}$ will be considered as a rigid space over~$L$.

%[[For Katz $p$-adic L-function, the setup should be more or less the following in order to fit with the rest: we fix a $p$-adic  completion $K_{w}$ (and if we want to consider things valued in $\overline{\Q}_{p}$ then $\overline{\Q}_{p}$ should be defined as an algebraic closure \emph{ of $K_{w}$}). Then the Katz pLF is associated with the $K_{w}$-adic avatar $\lam^{(w)}$ of the set of characters $\{\lambda^{\tau}\}_{\tau:K\to \C}$. The notion of avatar should be defined in terms of comparison between $\lam^{(w)}$ and each of the $\lam^{\tau}$ (i.e. the point is that we do not need or want to fix a complex embedding $\tau$). ]]

%All local epsilon factors in this paper are understood with respect to some  uniform  choice of additive characters of $F_{\wp}$ of level one for all $\wp\vert p$.

%

\subsection{Factorisation of the Rankin--Selberg $p$-adic $L$-function} Let $\sigma:=\theta(\psi)$ as in \S\ref{sec: theta lift}. There is a factorisation
\begin{align}\label{factor}
L(s-1/2,\sigma_{E}\otimes (\chi_{0}\chi')^{-1})= L(s,\psi (\chi_{0}\chi')^{-1})L(s,\psi^{*}(\chi_{0}\chi')^{-1}))
%=  L(s, \lam\chi')L(s,\lam^{*}\chi_{0}^{*}\chi_{0}^{-1}\chi') 
\end{align} 
of complex (more precisely $K\otimes {\bf C}$-valued) $L$-functions, valid for algebraic Hecke characters $\chi'$ over $E$. It implies the following  factorisation  of $p$-adic $L$-functions.

%From now on we simply write $\Sigma_{E}=\Sigma_{E}^{(w)}$.

%\subsubsection{Factorisation of the standard $p$-adic $L$-function}
\begin{lm} \label{p factor} Let $L_{p}(\sigma_{E}\otimes \chi_{0}^{-1})$ be the $p$-adic $L$-function associated with $\sigma=\theta(\psi)$ and the embedding $M\subset M_{v}\subset K_{w}$, where $w$ is the place of $K$ fixed above and $v$ its restriction to $M$. Let $L$ be a finite extension of $K_{w}$ splitting $E$.  We have
\begin{align}\label{eq fact}
L_{p}(\sigma_{E}\otimes \chi_{0}^{-1})\doteq  
%\frac{L_{\Sigma_{E},\lam}}{\Omega_{p,\Sigma_{E}}} \cdot \frac{L_{\Sigma_{E}c,\lam^{*}\chi_{0}^{*}\chi_{0}^{-1}}}{\Omega_{p,\Sigma_{E}c}} [[aug3 edited]]
  \frac{L_{\Sigma_{E},  \psi\chi_{0}^{*-1}}}{\Omega_{p,\Sigma_{E}}}\cdot
\frac{L_{\Sigma_{E}c,\psi^{*}\chi_{0}^{*-1}}}{\Omega_{p,\Sigma_{E}c}}
\end{align}
in $\Lambda=\Lambda_{L}$, where we use the symbol  $\doteq$ to signify an equality which holds up to multiplication by a constant in $\overline{\Q}^{\times}$.
%in $L^{\times}$.
\end{lm}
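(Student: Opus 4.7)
The plan is to verify the identity \eqref{eq fact} by interpolation. Both sides lie in $\Lambda$, and since $\Lambda$ is a domain and the finite-order characters $\chi' \colon \Gamma \to \Qbar^\times$ with every local component $\chi'_\frakp$ sufficiently ramified (so that Theorem \ref{stdLp} and the Katz interpolation both apply and local $L$-factors at $p$ become trivial) form a Zariski-dense subset of $\Y$, it suffices to verify the $\doteq$-equality at each such $\chi'$ up to a single algebraic constant independent of $\chi'$. Fix an embedding $\iota \colon \Qpbar \hookrightarrow \C$ for the remainder.

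The classical $L$-values match via the complex factorization \eqref{factor} evaluated at $s = 1$:
\[
L(1/2, \sigma_E^\iota \otimes (\chi_0\chi')^{\iota,-1}) = L(1, \psi^\iota(\chi_0\chi')^{\iota,-1}) \cdot L(1, \psi^{*,\iota}(\chi_0\chi')^{\iota,-1}).
\]
On the right of \eqref{eq fact}, each Katz factor interpolates an $L(0,\cdot)$-value of one of the characters $(\psi\chi_0^{*-1}\chi')^{-1}$ or $(\psi^*\chi_0^{*-1}\chi')^{-1}$. The CM relation $\psi\psi^* = |\cdot|_E^{-1}$ (derived from \eqref{cc} via $\psi(xx^c) = (\eta|\cdot|^{-1})(N_{E/F}(x)) = |x|_E^{-1}$) gives $\psi^{-1} = |\cdot|_E \psi^*$; meanwhile \eqref{cchar} combined with \eqref{cc} forces $\omega = \eta^2 = 1$ and hence $\chi_0^* = \chi_0^{-1}$. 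Together these identities convert each Katz $L(0,\cdot)$-value into the matching $L(1,\cdot)$-factor of the display above (using the shift $L(s,\mu|\cdot|_E) = L(s+1,\mu)$). For the Euler factors at $p$, at each split prime $\wp = \frakp\frakp^c$ of $F$ the isomorphism $E_\wp \cong E_\frakp \oplus E_{\frakp^c}$ and the theta-correspondence structure of $\sigma_\wp$ decompose $e_\wp(\sigma_E^\iota \otimes (\chi_0\chi')^{\iota,-1})$ into two local epsilon factors indexed by $\frakp$ and $\frakp^c$; a direct local calculation, using that the unit-root character $\alpha_\wp$ of $\sigma_\wp$ corresponds under the theta correspondence to the restriction of $\psi$ to the $\Sigma_E$-component, identifies these with the Katz Euler factors \eqref{efrakp}.

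What remains is a period relation. The Artin-formalism decomposition $L(s, \sigma^\iota, \mathrm{ad}) \doteq L(s, \eta^\iota) \cdot L(s, (\psi\psi^{*-1})^\iota)$ reduces the question to evaluating $L(1, \psi\psi^{*-1})^\iota$, which by Shimura's theory of CM periods is an algebraic multiple of a suitable monomial in the complex periods $\Omega_{\Sigma_{E,\iota},\infty}$ and $\Omega_{\Sigma_{E,\iota}c,\infty}$; the analogous $p$-adic identity handles the factors $\Omega_{p,\Sigma_E}$ and $\Omega_{p,\Sigma_E c}$ on the right-hand side of \eqref{eq fact}. The main obstacle is the precise bookkeeping of these period and epsilon-factor matchings: each individual identity invoked is known, but aligning all normalization conventions (for periods, epsilon factors, and Gau{\ss} sums) requires care. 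Fortunately, the $\doteq$ in the statement allows us to disregard constants in $\Qbar^\times$ throughout, which is what makes the argument manageable.
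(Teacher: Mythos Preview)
Your strategy—checking the identity at a Zariski-dense set of sufficiently ramified finite-order characters by matching $L$-values via \eqref{factor}, then matching $p$-Euler factors and periods—is exactly the paper's approach. But there is a genuine error in your self-duality step. You apply \eqref{cc} to $\psi$ to conclude $\psi\psi^* = |\cdot|_E^{-1}$ and $\omega = \eta^{2}=1$, hence $\chi_0^* = \chi_0^{-1}$. However, \eqref{cc} is a hypothesis on $\lambda$, not on the general character $\psi$ of \S\ref{sec: theta lift}; the only hypothesis linking $\psi$ and $\chi_0$ in this lemma is \eqref{cchar}. Neither $\psi\psi^* = |\cdot|_E^{-1}$ nor $\chi_0\chi_0^* = 1$ need hold individually. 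What \eqref{cchar} \emph{does} give is $(\psi\chi_0^{-1})|_{F_\A^\times} = \eta|\cdot|^{-1}$, so the composite $\lambda' = \psi\chi_0^{-1}$ (and likewise $\psi\chi_0^{*-1}$) satisfies the self-duality $\lambda'^{*} = \lambda'^{-1}|\cdot|_{\A_{E}}^{-1}$ of \eqref{SDrel}. This single relation on the product is what the paper uses to convert each Katz $L(0,\cdot)$-value into the matching factor of \eqref{factor} at $s=1$; your two intermediate identities happen to multiply to the correct one, but are individually unjustified under the stated hypotheses.

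There is also a sign slip in your Euler-factor sketch: the unit-root character $\alpha_\wp$ is $\psi_{\frakp^c}$ for $\frakp \in \Sigma_E^{(w)}$, i.e.\ the restriction of $\psi$ to the $\Sigma_E c$-component rather than the $\Sigma_E$-component, since by the Shimura--Taniyama formula $\psi_\frakp$ takes $w$-adic unit values precisely when $\frakp \notin \Sigma_E^{(w)}$. Getting this orientation right matters for the epsilon-factor matching, which in the paper also requires the local functional equation $\varepsilon(s,\chi)^{-1}=\varepsilon(1-s,\chi^{-1})\chi(-1)$ together with the self-duality \eqref{SDrel} to identify \eqref{epsig} with the product of the two Katz factors \eqref{efrakp}.
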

%[[note: the second factor is what we are interested in as it relates to $L(1, \lam)$; the first factor is nonzero]]
\begin{proof} 

%[[rewrite/ \cf BDP]]

We evaluate both sides of the proposed equality at   finite order characters $\chi'$ of $\Gamma$ which are sufficiently ramified in the sense that, for all primes $\frakp\vert p$ of $E$, $\chi_{\frakp}$ has conductor larger than the conductors of $\psi_{\frakp}$ and $\chi_{0, \frakp}$. 
This is sufficient as such characters are dense in  $\mathscr{Y}^{-}$ (cf. \cite[Lemma 10.2.1]{Di}).

%[[aug3]] 
Note first  the self-duality relation
 \begin{align}\label{SDrel}
\lambda'^{*}=\lambda'^{-1} |\cdot |^{-1}_{\A_{E}}
 \end{align} (where $|\cdot|=|\cdot|_{\A_{E}}$), valid for both $\lam'=\psi\chi_{0}^{-1}$ and $\lam'=\psi\chi_{0}^{*-1}$ (this follows from \eqref{cchar}). 
Evaluating  \eqref{factor}  at $s=1$, we find
$$L(1/2,\sigma_{E}\otimes (\chi_{0}\chi')^{-1})= L(0,\psi |\cdot| (\chi_{0}\chi')^{-1})L(0,\psi^{*}|\cdot| (\chi_{0}\chi')^{-1})=  
L(0, (\psi^{*}\chi_{0}^{*-1}\chi')^{-1})  L(0, ( \psi\chi_{0}^{*-1}   \chi')^{-1}).$$

%L(s, \lam\chi')L(s,\lam^{*}\chi_{0}^{*}\chi_{0}^{-1}\chi') $$

Therefore the $L$-values agree with the ones being interpolated by the $p$-adic $L$-functions in \eqref{eq fact}. We now  compare the local Euler-like interpolation  factors and the complex periods.

%The interpolation factors for $L_{p}(\sigma_{E}\otimes \chi_{0}^{-1})$ are described as follows. For   primes   $\frakp\vert \wp$   of $E$ and $F$ above $p$, denote by $\alpha_{\frakp}$ the character of $E_{\frakp}^{\times}$ induced 
%We use the notation of \cite{Di} except that as before we denote by $\wp$, $\frakp$, $v$, $w$ places of $F$, $E$, $M$, $K$ respectively. For $\frakp\vert \wp\vert p$, we denote
%% $\alpha_{\frakp}$ the character of $E_{\frakp}^{\times}$ induced 
% by $\alpha_{\wp}$ via the identification $F_{\wp}=E_{\frakp}$ (recall that $\wp$ splits in $E$).
% Then 
%$$
% e_{p}(\sigma_{E}^{\iota}\otimes\chi_{0}^{\iota,-1}\chi'^{\iota})=\prod_{\wp|p}\bigg{(}\frac{\zeta_{\wp}(2)L_{p}(1,\eta_{\wp})^{2}}{L(1/2,\sigma^{\iota}_{E,\wp}\otimes \chi_{0,\wp}^{\iota,-1}\chi'^{\iota}_{\wp})}
%\prod_{\frakp \vert \wp}Z_{\frakp}(\chi_{0,\frakp}^{\iota,-1}\chi'^{\iota}_{\frakp})\bigg{)}
%$$
%for 
%$$
%Z_{\frakp}(\chi_{0,\frakp}^{\iota,-1}\chi'^{\iota}_{\frakp})
%e_{\wp}(\sigma_{E}\otimes\chi_{0}^{-1}, \chi')=\prod_{\frakp\vert \wp}
%% \tau(\chi_{0,\frakp}^{-1}\chi'^{-1}_{\frakp}\cdot \alpha_{\frakp}) =
% \varepsilon (0, \alpha_{\frakp}^{-1}\chi_{0, \frakp}^{-1})
%$$
%where $\tau$ is a Gauss\ sum as in the interpolation factor for the Katz $p$-adic $L$-function. 
%(It similarly depends on choices of local additive characters which we assume to be the same as those made for the Katz $p$-adic $L$-functions under consideration.)
%and $\alpha_{v}$ being the unit character for the $\GL_{2}(F_{v})$-representation $\sigma_{E,v}$ (e.g. \cite[Def. 1.2.1]{Di}). 
 Recall from e.g. \cite[p. 119]{Hi8} that, for $\wp\mathscr{O}_{E}=\frakp\frakp^{c}$, we have
$$
%\sigma_{\wp\iso \pi(\lam_{\frakp}\chi_{0,\frakp},\lam_{\frakp^{c}}\chi_{0,\frakp^{c}})}
\sigma_{\wp}\iso \pi(\psi_{\frakp},\psi_{\frakp^{c}})
%\chi_{0,\frakp^{c}})
$$ 
%Here and in what follows, we let $w \in \Sigma_{E,p}$. 
By the construction of $\Sigma_{E}^{(w)}$ and the Shimura--Taniyama formula for Hecke characters (see \cite[Proposition A.4.7.4 (ii)]{CCO}, cf. also the paragraph after Example A.4.8.3 \emph{ibid.}), the character $\psi_{\frakp}$ has values in $w$-adic units if and only if $\frakp\notin \Sigma_{E}^{(w)}$, equivalently $\frakp\in \Sigma_{E}^{(w)}c$.
% if and only if $\frakp\in (\rho\circ\Sigma_{E})_{p}^{(w)}$. 
%(The last equivalence holds because $(\rho\circ\Sigma_{E})_{p^{(w)}}= \Sigma_{E,p}^{(w)}c$.) 
%From definition of the unit character, we conclude that $\alpha_{v}\circ q_{w}$ (resp. $\alpha_{v}\circ q_{w^{c}}$)
% is nothing but $\lam_{w}^{*}\chi_{0,w}^{*}$ (resp. $\lam_{w}\chi_{0,w}$, [[add explanation]]). 

Denoting by $\wp$ a fixed prime of $F$ above $p$ and by $\frakp$ the unique prime in $\Sigma_{E}^{(w)}$ above $\wp$, it follows that $\alpha_{\wp}=\psi_{\frakp^{c}}$ under the identification $E_{\frakp^{c}}=F_{\wp}$. Then, under our assumption on the ramification of $\chi'$, we have
%,    for all $\iota\colon \overline{\Q}\hookrightarrow \C$, we have
%$$
%\begin{aligned}
%%& Eul_{p}(\sigma_{E}\chi_{0}^{-1}\chi') \doteq
%%\prod_{v|p}
%%\frac{1}
%%\zeta_{\wp}(2)L_{p}(1,\eta_{\wp})^{2}}{1}
%%{L(1/2,\sigma_{E,\wp}\otimes \chi_{0,\wp}^{-1}\chi'_{\wp})} 
%\prod_{\frakp' \vert \wp}Z_{\frakp'}((\chi_{0,\frakp'}^{ -1}\chi'_{\frakp'})^{\iota})
%&=
%%\bigg{(}\prod_{w \in \Sigma_{E,p}}\frac{1}{L(1/2,\lam_{v}^{\rm u}\chi'_{v})}
% \tau((\psi_{\frakp^{c}}\chi_{0, \frakp^{c}}^{-1}\chi'_{\frakp^{c}})^{\iota})\cdot 
%%\bigg{(}\prod_{w \in \Sigma_{E,p}}\frac{1}{L(1/2,\lam_{v}^{\rm u}\chi_{0,v}^{*}\chi_{0,v}^{-1}\chi'_{v})}
% \tau((\psi_{\frakp}\chi_{0, \frakp}^{ -1}\chi'_{\frakp})^{\iota})
%% \tau(\psi_{\frakp^{c}}\chi_{0, \frakp^{c}}\chi'_{\frakp^{c}}) 
%%\tau(\lam_{w^{c}}^{{\rm u},*}\chi_{0,w^{c}}^{*}\chi_{0,w^{c}}^{-1}\chi'_{w^{c}}, \psi_{E_{w^{c}}})\bigg{)}\\
%%&= Eul_{p}(\lam\chi')\cdot Eul_{p}(\lam^{*}\chi_{0}^{*}\chi_{0}^{-1}\chi').
%\end{aligned}
%$$
\beq \label{epsig}
e_{\wp}(\sigma_{E}\otimes\chi_{0}^{-1}, \chi')= \varepsilon(0, \psi_{\frakp^{c}}^{-1}\chi_{0, \frakp}\chi'_{\frakp})\cdot
\varepsilon(0,\psi_{\frakp^{c}}^{-1}\chi_{0, \frakp^{c}}\chi'_{\frakp^{c}})
\eeq
%$$
%e_{\wp}(\sigma_{E}\otimes\chi_{0}^{-1}, \chi')= \varepsilon(0, \psi_{\frakp^{c}}\chi_{0, \frakp}^{ -1}\chi'^{-1}_{\frakp})\cdot
%\varepsilon(0,\psi_{\frakp^{c}}\chi_{0, \frakp^{c}}^{-1}\chi'^{-1}_{\frakp^{c}})
%$$
whereas the  Katz interpolation factors  above $\wp$ are
$$\varepsilon(0,\psi_{\frakp}^{-1}\chi_{0, \frakp^{c}}\chi'_{\frakp}{}^{-1})^{-1}
\cdot \varepsilon(0,\psi_{\frakp}^{-1}\chi_{0, \frakp}\chi'_{\frakp^{c}}{}^{-1})^{-1}.
$$
By the functional equation
$$\varepsilon(s, \chi)^{-1}=\varepsilon (1-s, \chi^{-1})\chi(-1)$$
valid for any character $\chi$ of a local field (e.g. \cite[(23.4.2)]{BH}) and the self-dualities \eqref{SDrel}, these equal
\begin{gather*} 
\varepsilon(0,|\cdot|\psi_{\frakp}\chi_{0, \frakp^{c}}^{-1}\chi'_{\frakp}{})
\cdot \varepsilon(0,|\cdot|\psi_{\frakp}\chi_{0, \frakp}^{-1}\chi'_{\frakp^{c}})\\
=
\varepsilon(0,\psi_{\frakp^{c}}^{-1}\chi_{0, \frakp}\chi'_{\frakp}{})
\cdot \varepsilon(0,\psi_{\frakp^{c}}^{-1}\chi_{0, \frakp^{c}}\chi'_{\frakp^{c}})
\end{gather*}
matching \eqref{epsig}.

% \tau(\psi_{\frakp}^{-1}\chi_{0, \frakp}^{-1}\chi'^{-1}_{\frakp})= \tau((\psi\chi_{0}^{*-1})^{*}_{\frakp}\chi_{\frakp}'^{-1})=\tau((\psi_{\frakp^{c}}\chi^{-1}_{0,\frakp}\chi_{\frakp}'^{-1})$$
%and 
%$$\tau(\psi^{*-1}_{\frakp^{c}}\chi_{0, \frakp^{c}}^{*}\chi'^{-1}_{\frakp^{c}})= \tau((\psi\chi_{0}^{*-1})^{*-1}_{\frakp^{c}}\chi_{\frakp^{c}}'^{-1})=\tau(\psi_{\frakp^{c}}\chi^{-1}_{0,\frakp^{c}}\chi_{\frakp^{c}}'^{-1}),$$
%where in both cases we have used \eqref{SDrel}
%As  ${L(1/2,\sigma_{E,\wp}\otimes \chi_{0,\wp}^{-1}\chi'_{\wp})}=1$ due to the ramification condition on $\chi'$, we deduce that the equality 
%$$e_{p}((\sigma_{E}\otimes\chi_{0}^{-1}\chi')^{\iota})\doteq e_{p}((\lambda\chi')^{\iota})e_{p}((\lambda^{*}\chi_{0}^{*}
%\chi_{0}^{-1}\chi')^{\iota})$$
% holds up to  a non-zero algebraic constant.

%In view of explicit description of the Euler factor $Eul_{p}(\sigma_{E}\chi_{0}\chi'^{-1})$ ([[add reference/ explanation]])
%and the factorisation of complex L-functions, 
%the equality holds up to multiplication by a constant in $L^{\times}$. 
%The Euler factors $Eul^{(p)}(\lam\chi')$ and $Eul^{(p)}(\lam^{*}\chi_{0}^{*}\chi_{0}^{-1}\chi')$ 
%%for $\mathfrak{D}$ being the prime to $p$ conductor of $\lam^{*}\chi_{0}^{*}\c
%are readily seen to be algebraic and independent of $\chi'$.
%as the conductor of $\chi'$ is prime to $\mathfrak{CD}$.
%As the conductor of $\chi'$ is prime to $\mathfrak{CD}$, the constant is readily seen to be independent of $\chi'$.
%[[aug3: edited. the proof is morally correct, but the $s$'s in $\tau=\varepsilon$ factors need to be put correctly... todo ]]

In regard to periods, we first note that the periods in the asserted equality are independent of $\chi'$. 
As 
$${\rm Ad}(\sigma)\iso \eta \oplus \Ind_{E}^{F}(\psi(\psi^{*})^{-1}),$$ we have 
$$
L(1,\sigma^{\iota}, {\rm ad})
=L(1,\eta)L(1,\psi(\psi^{*})^{-1})
=L(1,\eta)L(0,(\psi(\psi^{*})^{-1})^{D})
%\doteq L(1,\eta\chi_{0}^{-1}) \prod_{\tau} \Omega_{\tau}.
$$

The infinity type of $\iota(\psi(\psi^{*})^{-1})^{D}$ is $2\Sigma_{E}^{\iota}$.
From the algebraicity of Hecke $L$-values due to Shimura, we have
$$
L(0,(\psi(\psi^{*})^{-1})^{D}) \doteq \Omega_{\Sigma_{E}^{\iota}}^{2}
$$
(see \cite{Sh1} and \cite[\S4]{HT}, especially \cite[pp. 215-16]{HT}). 
Moreover, we have a period relation 
$$
\Omega_{\Sigma_{E}^{\iota}}^{1} \doteq \Omega_{\Sigma_{E}^{\iota}\circ c}^{1}
%\Omega_{\infty,\tau} \doteq \Omega_{\infty,\tau\circ c}
$$
(see \cite{Sh2} and \cite[Thm. 32.5]{Sh3}).

It follows that
$$\Omega_{\sigma}^{\iota}\doteq
\Omega_{\Sigma_{E}^{\iota}}^{2} \doteq \Omega_{\Sigma_{E}^{\iota}}\Omega_{\Sigma_{E}^{\iota} \circ c}.
$$
%For a precise comparison of periods, we refer to 
%%\cite[p. 61]{Hip1} and 
%\cite[Prop. 5.7]{Hip1}.
%This finishes the proof.
%
%Finally, the power of $\pi$ and $\Gamma$-factors appearing in both sides of the asserted equality match by definition. 
\end{proof}

\subsection{Construction of an auxiliary character} 
We will now look for a character $\chi_{0}$ suitable for our purposes. If a Hecke character $\lambda'$ of $E_{\A}^{\times}$ satisfies \eqref{SDrel}, then its functional equation relates $L(s,\lam')$ with $L(1-s, \lam'^{-1})=L(2-s, \lam'^{*}) =L(2-s, \lam')$; the sign of this functional equation is the root number
$$w(\lam'):=\varepsilon(1,  \lam')\in \{\pm 1\}.$$
%If $\lam'$ is a 

We say that  finite-order character
  $\chi\colon  E^{\times}\backslash E^{\times}_{\A^{\infty}}\to \overline{\Q}^{\times}$  is \emph{anticyclotomic} if it satisfies the following two conditions, which are equivalent by \cite[Lemma 5.31]{HiH}:
 \begin{enumerate}
\item  $\chi^{*}=\chi^{-1}$;
\item there exists a 
finite-order character $\chi_{0} \colon  E^{\times}\backslash E^{\times}_{\A^{\infty}}\to \overline{\Q}^{\times}$ such that 
\beq
\label{chi chi0}\chi  = \chi_{0}/\chi_{0}^{*}.
\eeq
 \end{enumerate}

\begin{lm}\label{lm chi0}\label{twist}  Let $\lam$ be a Hecke character satisfying \eqref{SDrel}. Suppose that {the extension $E/F$ is ramified.}
%\begin{lm}\label{lm chi0}\label{twist} Suppose that {the extension $E/F$ is  ramified at some prime of residue characteristic different from $2$}.
Then there exist an anticyclotomic  finite-order character $\chi=\chi_{0}/\chi_{0}^{*}\colon E^{\times}\backslash E_{\bf A}^{\times}\to \overline{\Q}^{\times}$  such that the root number 
%$$w( \lambda\chi_{1}\chi_{1}^{*,-1})=+1,$$
$$w( \lambda\chi)=+1.$$
%$$w( \lambda\chi_{0}\chi_{0}^{*-1})=+1.$$
%$$\varepsilon (1/2, \lambda^{{\rm u}}\chi_{1}\chi_{1}^{*,-1})=+1,$$
%and a finite order character $\chi_{2}\colon E^{\times}\backslash E_{\bf A}^{\times}\to \overline{\Q}^{\times}$  such that the root number 
%$$w( \lambda^{*}\chi_{2}^{*}\chi_{2}^{-1})=+1.$$
%[[removed march 15, it should be ok with same chi0 for both]]
%$$\varepsilon (1/2, \lambda^{{\rm u},*}\chi_{2}^{*}\chi_{2}^{-1})=+1.$$
\end{lm}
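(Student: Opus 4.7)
The plan is a local-global strategy. Since $\chi=\chi_{0}/\chi_{0}^{*}$ is anticyclotomic, $\lambda\chi$ inherits the self-duality \eqref{SDrel} from $\lambda$, so both root numbers factorise as products $w(\lambda\chi)=\prod_{v}\varepsilon_{v}(\lambda_{v}\chi_{v})$ of local signs in $\{\pm 1\}$ over places $v$ of $F$. Since $w(\lambda)=-1$ by \eqref{root num}, the goal is to engineer $\chi$ so that the ratio $w(\lambda\chi)/w(\lambda)$ equals $-1$, via a single local sign flip.

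Because $E/F$ is ramified, fix a finite prime $v_{0}$ of $F$ that is ramified in $E$, and let $\frakv_{0}$ denote the unique prime of $E$ above it. I would look for $\chi$ ramified only at $\frakv_{0}$, and verify that all local ratios outside $v_{0}$ equal $1$. At archimedean $v$, a finite-order anticyclotomic character of $\C^{\times}/\R^{\times}$ is trivial. At finite inert $v$, any unramified anticyclotomic character of $E_{v}^{\times}$ is trivial, since a uniformiser of $F_{v}$ is also one of $E_{v}$. At finite split $v=\frakv\frakv^{c}$ with $\chi_{v}=(\chi_{\frakv},\chi_{\frakv}^{-1})$ unramified, condition \eqref{cc} forces $a(\lambda_{\frakv})=a(\lambda_{\frakv^{c}})$, and the standard formula for the twist of an epsilon factor by an unramified character yields
\[
\varepsilon_{v}(\lambda_{v}\chi_{v})/\varepsilon_{v}(\lambda_{v})=\chi_{\frakv}(\pi_{v})^{a(\lambda_{\frakv})-a(\lambda_{\frakv^{c}})}=1.
\]
At a finite ramified $v\neq v_{0}$, an unramified anticyclotomic character has order at most $2$; I can impose triviality of $\chi_{v}$ by restricting to an appropriate finite-index subgroup of anticyclotomic characters unramified outside $\frakv_{0}$, which still surjects onto sufficiently ramified local anticyclotomic characters at $\frakv_{0}$. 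Thus only the factor at $v_{0}$ contributes to the ratio.

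The crux is the local claim: for the ramified quadratic extension $E_{\frakv_{0}}/F_{v_{0}}$ and the local character $\lambda_{v_{0}}$, there exists a finite-order anticyclotomic character $\chi_{v_{0}}$ of $E_{\frakv_{0}}^{\times}$ with $\varepsilon_{v_{0}}(\lambda_{v_{0}}\chi_{v_{0}})=-\varepsilon_{v_{0}}(\lambda_{v_{0}})$. This is a local statement about self-dual characters; it follows from Tate's explicit formula for local epsilon factors, whereby for sufficiently ramified $\chi_{v_{0}}$ the epsilon factor becomes essentially a Gauss sum in $\chi_{v_{0}}$, and its sign can be toggled by multiplying $\chi_{v_{0}}$ by a suitable character of strictly smaller conductor. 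Given such a local $\chi_{v_{0}}$, local Hilbert 90 (the surjectivity of $\mu\mapsto \mu/\mu^{c}$ onto anticyclotomic characters) produces $\chi_{0,\frakv_{0}}$ with $\chi_{v_{0}}=\chi_{0,\frakv_{0}}/\chi_{0,\frakv_{0}}^{c}$; class field theory then extends $\chi_{0,\frakv_{0}}$ to a finite-order Hecke character $\chi_{0}$ of $E$ unramified at all finite places outside $\frakv_{0}$ and trivial at ramified auxiliary places. The associated $\chi:=\chi_{0}/\chi_{0}^{*}$ is anticyclotomic by construction and satisfies $w(\lambda\chi)=+1$.

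The main obstacle is the local sign-flipping claim, which is where the hypothesis that $E/F$ is ramified is crucially used, and whose verification requires a direct computation with local epsilon factors of self-dual characters on a ramified quadratic extension.
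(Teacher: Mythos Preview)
Your strategy is genuinely different from the paper's, and the gap you flag at the end is real: you have not proved the local sign-flipping claim at the ramified place, and the sketch ("essentially a Gauss sum\ldots\ toggled by a character of smaller conductor") is not a proof. Even the preliminary assertion that each local factor $\varepsilon_{v}(\lambda_{v}\chi_{v})$ lies in $\{\pm 1\}$ needs justification (it uses the conjugate self-duality and a $c$-invariant additive character nontrivially); what you actually need, and what your argument at split places correctly computes, is the \emph{ratio} $\varepsilon_{v}(\lambda_{v}\chi_{v})/\varepsilon_{v}(\lambda_{v})$. The globalization step---arranging $\chi$ trivial at all ramified places $\neq v_{0}$ while realising a prescribed highly ramified anticyclotomic component at $v_{0}$---also needs an actual approximation argument rather than the sentence you give. (A minor point: you invoke \eqref{root num} and \eqref{cc}, but the lemma only assumes \eqref{SDrel}; the conductor equality at split places you want follows from \eqref{SDrel} alone, and if $w(\lambda)=+1$ the trivial twist already works.)

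The paper avoids any local epsilon-factor computation. It first reduces, via a preliminary anticyclotomic twist $\chi_{1}/\chi_{1}^{*}$, to the case where the conductor $\mathfrak{C}$ of $\lambda$ has \emph{odd} valuation at some prime $\wp$ of $E$ ramified over $F$ (this is the only place the ramification hypothesis is used). Then $N_{E/F}(\mathfrak{C})$ is not a square in $F$. Now twist by $\chi'_{E}:=\chi'\circ N_{E/F}$ for a \emph{quadratic} Hecke character $\chi'$ of $F$ with conductor prime to $\mathfrak{C}$; such $\chi'_{E}$ is automatically anticyclotomic. Tate's explicit global root-number formula \cite[(3.4.6)]{Ta} shows that $w(\lambda\chi'_{E})$ is controlled by $\chi'(N_{E/F}(\mathfrak{C}))$, which can be made $\pm 1$ at will since $N_{E/F}(\mathfrak{C})$ is a non-square. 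Thus the paper trades your delicate local analysis at one ramified prime for a short global argument with quadratic twists from the base field.
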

%(The first root number in the statement of the lemma is more precisely the common  root number of $\lambda^{\tau, {\rm u}}\chi_{1}^{\tau}\chi_{1}^{\tau,*-1}$ for $\tau\colon \overline{K}\hookrightarrow {\bf C}$, and similarly for the second one.)
%\begin{lm} Let $\lambda$ be  a given Hecke character of  a CM field $E$ with maximal totally real subfield $F$. Suppose that $$\lambda|_{F_{\bf A}^{\times}}=\eta |\cdot|_{{F_{\bf A}^{\times}}}.$$ 
%%and the root number equals $-1$. 
%where $\eta$ is the quadratic character of $F$ associated with $E/F$. 
%Suppose that \emph{the extension $E/F$ is  ramified at some prime of residue characteristic different from $2$}.
%
%Then there exists a finite order character $\chi$ over $E$ of the form $\chi_{0}/\chi_{0}^{*}$ for a finite order character $\chi_{0}$ over $E$
%%such that $\chi|_{F_{\bf A}^{\times}}=1$ 
%satisfying
%$$\epsilon(1/2, \lambda\chi)=1.$$
%\end{lm}
\begin{proof} 

%In this proof, we denote by $\chi$ a character of the form $\chi_{0}/\chi_{0}^{*}$. Recall that $\chi$ being of the form $\chi_{0}/\chi_{0}^{*}$ is equivalent 
%to $\chi$ being anticyclotomic (\cf \cite[Lem. 5.31]{HiH} and \cite[(7.18)]{HiM}). 
%If $\lam^{*}$ has root number $+1$, there is nothing to prove. We thus assume that $\lam^{*}$ has root number $-1$. 
%A comparison of Euler factors shows that $L(s,\lam')=L(s,\lam'^{*})$. Then second assertion follows from the first one by taking [[aug3 todo check $\chi_{2}=\chi_{1}^{*}$]]. We prove the first assertion.
%%Suppose that there exists a self-dual $\psi$ with root number $1$ and the same infinity type with $\lam$. 
%Let $\chi$ be $\psi\lam^{-1}$. It is of finite order and has the desired form. 
%By definition, $\lam\chi$ is self-dual with the desired root number.
%
%We first show the existence of $\psi$ under conditional existence of $\lam_{1}$ satisfying 
%a mild ramification hypothesis on its conductor. 
%We then show the existence of $\lam_{1}$. 

%By  \cite[Lem. 5.31]{HiH}, a finite order character $\chi$  of $E^{\times}\backslash E^{\times}_{\A^{\infty}}$ is of the form $\chi_{0}/\chi_{0}^{*} $ if and only if it is anticyclotomic, i.e. it satisfies $\chi^{*}=\chi^{-1}$. 

Suppose first that $\lam$ satisfies the following condition. (We will later reduce to this case.)
$$\text{($*$) there is a  prime $\wp$ of $E$, ramified over $F$,
such that ${\rm ord}_{\wp}(\frak{C})$ is odd,}$$
where $\frak{C}$ is the conductor of $\lam$.
In particular, the norm ideal $N_{E/F}(\frak{C})$ is not a square. 
For a quadratic character $\chi'$ over $F$, let $\chi_{E}' = \chi' \circ N_{E/F}$ be the corresponding Hecke character over $E$. 
By definition, $\chi_{E}'$ is a quadratic Hecke character over $E$ and also anticyclotomic. 
For the latter, note that
$$
\chi_{E}'(a)\chi_{E}'^{*}(a)=\chi'(N_{E/F}(a))\chi'(N_{E/F}(c(a)))=\chi'(N_{E/F}(a))^{2}=1.
$$ 
We consider twists of $\lam$ by characters of the form $\chi_{E}'$ with the conductor of $\chi_{E}'$ prime to $\frak{C}$. 
Recall that the twist of a self-dual character by an anticyclotomic character is again self-dual. 
To prove existence of twist with change in the root number, it thus suffices to show that $\chi'$ can be chosen so that $\chi'(N_{E/F}(\frak{C}))$ takes value $1$ or $-1$. 
The sufficiency follows from the explicit root number formula for the twist $\lam\chi_{E}'$ in 
%\cite[Prop. 1]{Sa} (also see 
\cite[(3.4.6)]{Ta}. 
As the norm ideal $N_{E/F}(\frak{C})$ is not a square, the existence of desired $\chi'$ follows readily.

To reduce to condition ($*$), it suffices, given $\lambda$, to find ${\lambda}'=\lambda\chi_{1}/\chi_{1}^{*}$ satisfying ($*$). 
%Let $\wp$ be a prime of $E$ ramified in $E/F$ with odd residue characteristic, and let $r$ be an odd integer greater than the exponent of $\wp$ in the conductor of $\lambda$. 
%Let $\chi_{1}$ be a finite order character of $E_{\A}^{\times}/E^{\times}$ such that the exponent of $\wp$ in the conductor of $\chi_{1}$ 
%is exactly $r$. 
Let $\wp$ be a prime of $E$ ramified in $E/F$ and let $\delta:={\rm ord}_{\wp}(2)$.  Let $r$ be an odd integer greater than $\delta+2$ and  the exponent of $\wp$ in the conductor of $\lambda$. 
Let $\chi_{1}$ be a finite order character of $E_{\A}^{\times}/E^{\times}$ such that the exponent of $\wp$ in the conductor of $\chi_{1}$ 
is exactly $r-\delta$. 
As  $(1+\wp^{t-\delta} {\mathscr O}_{E})^{2}= 1+\wp^{t} {\mathscr O}_{E}$
%As $(2, \wp)=1$, the squares of elements of $1+\wp^{r-1} {\mathscr O}_{E}$ do not belong to $1+\wp^{r}{\mathscr O}_{E}$, 
for all $t\geq \delta+1$, the conductor  of $\chi^{2}_{1, \wp}$ is exactly $\wp^{r}$. 
Letting $\chi=\chi_{1}/\chi_{1}^{*}$, and denoting by $\varpi$ a uniformiser at $\wp$, for any $t\geq \delta+1$ we have for any $t\geq (r-\delta)/2$:
$$\chi_{\wp}(1+\varpi^{t}a )=\chi_{1,\wp}(1+\varpi^{t}a )\chi_{1,\wp}(1-\varpi^{t}a )^{-1}=\chi_{1,\wp}(1+\varpi^{t}a )^{2}.$$
It follows that the conductor of $\chi_{\wp}$ is the same as the conductor of $\chi_{1}^{2}$, that is, $\wp^{r}$;  by our choice of $r$ the same is true of $\lambda\chi$, and in particular ${\lambda}'$ satisfies ($*$).
%
%As $(2, \wp)=1$, the squares of elements of $1+\wp^{r-1} {\mathscr O}_{E}$ do not belong to $1+\wp^{r}{\mathscr O}_{E}$, 
%and in particular we may choose $\chi_{1}$ such that $\c hi_{1}|_{1+\wp^{r-1}{\mathscr O}_{E}}^{2}\neq 1$. 
%Letting $\chi=\chi_{1}/\chi_{1}^{*}$, and denoting by $\varpi$ a uniformiser at $\wp$, we have 
%$$\chi_{\wp}(1+\varpi^{r-1}a )=\chi_{1,\wp}(1+\varpi^{r-1}a )\chi_{1,\wp}(1-\varpi^{r-1}a )^{-1}=\chi_{\wp}(1+\varpi^{r-1}a )^{2}$$
%which, for some choice of $a\in {\mathscr O}_{E, \wp}$, is not equal to $1$ by construction. It follows that $\chi_{\wp}$ has conductor exactly $\wp^{r}$, 
%and by our choice of $r$ the same is true of $\lambda^{*}\chi$. In particular ${\lambda^{*}}'$ satisfies (SF).
\end{proof}
%\noindent 
%\begin{remark}
% Lemma 2.2 follows from applying  the above lemma to $\lam^{*}$.
%\end{remark}

%
%
%
%
%Let $\chi_{1}$ be any character as in the Lemma, with values in a CM extension $K'$ of $K$,  and let $\chi_{0}$ be either $\chi_{1}$ or $\chi_{1}^{*}$.  [[aug3]]
%Let 
%%[[note : have changed the next equation (before was $\psi\chi0=\lam$]]
%$$\psi:=\lambda\chi_{0},$$
%which is a Hecke character satisfying the conditions  \eqref{cass1}, \eqref{cass2} for $K'$.
%%\footnote{Note that the values of $\psi$ on $E_{\A^{\infty}}^{\times}$ could not be contained in a  proper subfield of $K$, since otherwise the common CM type $(K, \Sigma)$ attached to $\psi $ and $\lam$ would be induced from such subfield, contradicting the fact that   $\lam$ corresponds to a \emph{simple} CM abelian variety.}
%%[[explain where are the values of $\psi$! We will fix a place $w$ of $K$ above $p$ and everything will have values in $K_{w}$?]]
%(The reader may find it helpful to make the simplifying assumption $\psi=\lam$, $\chi_{0}={\bf 1}$ on a first reading.)
%Note that those conditions imply  that 
%\begin{align}\label{cchar}
%\chi_{0}|_{F_{\A}^{\times}}=\omega:=\eta\cdot \psi|_{F_{\A}^{\times}}\cdot |\,\,|_{\A_{F}}.
%\end{align}
%
%
%[[maybe remove later: We will then consider the following two situ fations:
%\begin{enumerate}
%\item $\lam=\psi\chi_{1}^{-1}$, with $\chi_{1}$ as above (for STD);
%\item $\lam=\psi\chi_{2}^{-1}$, with $\chi_{2}$ as above (for BDP) 
%\end{enumerate}¥

\begin{rema}\label{satisf} Let $\chi=\chi_{0}\chi_{0}^{*-1}$ be as in the Lemma and let $\psi:=\lambda\chi_{0}$, where $\lam$ is as fixed in the Introduction. Then $\psi$ satisfies  \eqref{cchar}; by the analogous formula to \eqref{factor}, $\sigma:=\theta(\psi)$ has root number $-1$ (i.e., it satisfies  \eqref{root n}).
\end{rema}

%Let $\chi_{0}=\chi_{1}$ or $\chi_{2}$ as in the Lemma. Then the root number condition for $A_{\psi}$ is satisfied; we verify this for $\chi_{0}=\chi_{2}$:
%$$\varepsilon(0,\psi\chi_{0}^{-1})\varepsilon(1/2,\psi^{{\rm u},*}\chi_{0}^{-1})=\varepsilon(0, \lam)\varepsilon(0, \lam \chi_{0}^{*}\chi_{0}^{-1})=-1.$$
%%$$\varepsilon(1/2,\psi^{\rm u}\chi_{0}^{-1})\varepsilon(1/2,\psi^{{\rm u},*}\chi_{0}^{-1})=\varepsilon(1/2, \lam^{\rm u})\varepsilon(1/2, \lam^{\rm u} \chi_{0}^{*}\chi_{0}^{-1})=-1.$$
%[[aug3: check for typos todo]]

\subsection{$p$-adic Gross--Zagier   formula}
We recall a formula relating    Rankin--Selberg  $p$-adic $L$-functions  to points on abelian varieties; we will later deduce from it an  analogous formula for Katz $p$-adic $L$-functions. 
Let $\sigma$, $A=A_{\sigma}$, and $\chi_{0}$ be as in Theorem \ref{stdLp}, and assume that $E/F$ splits at each prime above $p$.  and that  
\beq\label{sign sigma}
\varepsilon(1/2,\sigma_{E}\otimes\chi_{0}^{-1})=-1.
\eeq

%\subsection*{Anticyclotomic Heegner points}

%\subsubsection*{$p$-adic Gross--Zagier formula} 
\begin{thm}\label{gzf} Let $\circ=\emptyset$ or $\circ=  \wp$ for a prime  $\wp\vert p$ of $F$. There is a `pair of points' 
$$\mathscr P_{\chi_{0}}\otimes \mathscr P^{\vee}_{\chi_{0}}\in A_{E}(\chi_{0}^{-1}\chi^{-1}_{\rm univ, \circ} ) \otimes_{\Lambda_{\circ}^{-}} A_{E}^{\vee}(\chi_{0}\chi_{\rm univ, \circ})\otimes_{\Lambda_{\circ}^{-}} {\mathscr{K}_{\circ}^{-}},$$ 
such that
$$\langle \mathscr P_{\chi_{0}}\otimes \mathscr P^{\vee}_{\chi_{0}}\rangle = L_{p}'(\sigma_{E}\otimes\chi_{0}^{-1})$$
in $\mathscr{K}_{\circ}^{-}$.
%\otimes \Gamma^{+}$
 Here $\langle x\otimes y  \rangle:=\langle x, y\rangle$ is the big height pairing relative to the cyclotomic logarithm as in \eqref{bigHP}.  
\end{thm}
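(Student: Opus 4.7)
The plan is to deduce the theorem directly from the $p$-adic Gross--Zagier formula of the second author \cite{Di} for Rankin--Selberg $p$-adic $L$-functions. Under the sign hypothesis \eqref{sign sigma}, the interpolation property of Theorem \ref{stdLp} together with the complex functional equation forces
\[
L_p(\sigma_E\otimes \chi_0^{-1})|_{\Y_\circ^-}\equiv 0,
\]
so that its cyclotomic derivative $L_p'(\sigma_E\otimes \chi_0^{-1})$ is a well-defined element of $\Lambda_\circ^-$ (upon identifying $\Gamma^+\cong \Z_p$ via the cyclotomic logarithm). The formula of \cite{Di} expresses precisely this derivative as the $p$-adic height of a Heegner-type class on a suitable abelian variety over $F$, which in our setting is $A = A_\sigma$.

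Concretely, the sign $-1$ in \eqref{sign sigma} selects a quaternion algebra $\mathbf B/F$ for which Shimura curves $X_U/F$ exist; by Jacquet--Langlands applied to $\sigma = \theta(\psi)$, the abelian variety $A$ arises as an isotypic quotient of the Jacobian of such a curve. CM points on $X_U$ attached to an order in $E$ and twisted by the finite-order character $\chi_0$ then produce classes in $A$; organising them along the universal anticyclotomic tower yields the `pair of points'
\[
\mathscr{P}_{\chi_0}\otimes \mathscr{P}_{\chi_0}^\vee \in A_E(\chi_0^{-1}\chi_{\rm univ,\circ}^{-1})\otimes_{\Lambda_\circ^-} A_E^\vee(\chi_0\chi_{\rm univ,\circ})\otimes_{\Lambda_\circ^-}\mathscr{K}_\circ^-,
\]
with $\mathscr{P}_{\chi_0}^\vee$ built analogously on the dual side (using either the canonical polarisation of $A$ or by repeating the construction for $\sigma^\vee$). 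Disegni's theorem, translated into this formalism, then reads
\[
\langle \mathscr{P}_{\chi_0}, \mathscr{P}_{\chi_0}^\vee \rangle_\circ = L_p'(\sigma_E\otimes \chi_0^{-1})
\]
in $\mathscr{K}_\circ^-$. The $\wp$-subextension case ($\circ=\wp$) follows from the full anticyclotomic case ($\circ=\emptyset$) by specialisation along the surjection $\Lambda^-\twoheadrightarrow \Lambda_\wp^-$, under which both sides transform compatibly (the big height pairing being functorial in quotients of the Iwasawa algebra).

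The main obstacle is dictionary-checking rather than conceptual. One must verify that (i) the $p$-adic Rankin--Selberg $L$-function constructed in \cite{Di} matches our $L_p(\sigma_E\otimes \chi_0^{-1})$ after the involution $\chi'\mapsto \chi'^{-1}$ and the shift by $\chi_0^{-1}$ isolated in the proof of Theorem \ref{stdLp}; (ii) the big height pairing of \cite{Di} is taken with respect to the cyclotomic logarithm and unit-root Hodge splittings, agreeing with \eqref{bigHP}; and (iii) the Heegner-point construction transports correctly through the theta correspondence to give classes on $A$ in the expected isotypic components. Because the identity is formulated in the fraction field $\mathscr{K}_\circ^-$, any residual scalar discrepancy from period or measure normalisations is absorbed and does not affect the statement; the essential content is the existence of the pair of points together with the identification of their height, which is exactly the output of \cite{Di}.
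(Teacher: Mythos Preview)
Your approach is the same as the paper's---both deduce the result from \cite[Theorem C.4]{Di}---but you have glossed over one substantive point that the paper treats explicitly. The formula in \cite{Di} does \emph{not} read $\langle \widetilde{\mathscr P}^{+}, \widetilde{\mathscr P}^{-,\iota}\rangle = \widetilde{L}_{p}'(\sigma_{E})$; rather it has the shape
\[
\langle \widetilde{\mathscr P}^{+}(f^{+,p}),\, \widetilde{\mathscr P}^{-}(f^{-,p})^{\iota}\rangle \;=\; \widetilde{L}_{p}'(\sigma_{E})\cdot \widetilde{\mathscr Q}(f^{+,p},f^{-,p}),
\]
where $\widetilde{\mathscr Q}$ is a product of local toric period integrals depending on the chosen Shimura-curve parametrisations (test vectors) $f^{\pm,p}$. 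This factor is \emph{not} a scalar in $L^{\times}$ but an element of $\Lambda^{-}$, so your claim that working in $\mathscr{K}_{\circ}^{-}$ ``absorbs any residual scalar discrepancy'' does not dispose of it. One must argue that $f^{\pm,p}$ can be chosen so that $\widetilde{\mathscr Q}(f^{+,p},f^{-,p})|_{\mathscr{Y}_{\circ}^{-}}\neq 0$, and \emph{then} divide by it to define $\mathscr{P}_{\chi_{0}}\otimes\mathscr{P}_{\chi_{0}}^{\vee}$. The paper does this via the theorems of Tunnell and Saito, which under the sign hypothesis \eqref{sign sigma} guarantee that for each $\chi$ in the relevant component there exist test vectors with $\widetilde{\mathscr Q}(\chi)\neq 0$; picking one such $\chi$ in $\mathscr{Y}_{\circ}^{-}$ gives the required non-vanishing on that subspace.

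A second, more minor, omission: to invoke \cite{Di} one must check its running hypotheses (near-ordinariness of $\sigma$ at all $\wp\mid p$, splitting of every $\wp\mid p$ in $E$, and the central-character and root-number conditions). The paper verifies these using Lemma \ref{K*}, Lemma \ref{split}, and Remark \ref{satisf}; your proposal alludes to them only implicitly.
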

\begin{proof} This follows from \cite[Theorem C.4]{Di}. Consider the  scheme $\Y_{V^p}/L$ corresponding to the rigid space with that name in \emph{loc. cit.} (in the sense that our $\Y_{V^p}$ is the spectrum of the ring of bounded functions on the space $\Y_{V^p} $ of \cite{Di}),
% footnote{More precisely the spectrum of the ring of bounded functions on that rigid space.} 
 for a choice of level $V^{p}\subset E^{\times}_{\A^{p\infty}}$; it parametrises continuous $p$-adic characters $\widetilde{\chi}$ of $E_{\A^{\infty}}^{\times}/E^{\times}V^{p}$ satisfying $\omega\widetilde{\chi}|_{F_{\A^{\infty}}^{\times}}=\one$. We denote by $\widetilde{\chi}_{\rm univ}\colon E_{\A^{\infty}}^{\times}/E^{\times}V^{p}\to \mathscr{O}(\mathscr{Y_{V^p}})^{\times}$ the universal character.  Assume that $\omega|_{V^{p}}=\one$. 
Then we may  identify ${\mathscr Y}^{-}$ with the connected component $\mathscr Y^{\circ }_{\chi_{0}}\subset \mathscr Y_{V^{p}}$ containing the character $\chi_{0}^{-1}$ via 
\beq\label{identify}
\chi\mapsto \widetilde{\chi}=\chi_{0}^{-1}\chi^{-1}.
\eeq

 Using the notation of \emph{loc. cit.} with the addition of a tilde,  the $p$-adic Gross--Zagier formula proved there has the form
$$\langle \widetilde{\mathscr P}^{+}(f^{+,p}) , \widetilde{\mathscr P}^{-}(f^{-,p})^{\iota} \rangle = \widetilde{L}_{p}'(\sigma_{E})\cdot \widetilde{\mathscr Q}(f^{+,p}, f^{-,p})|_{\mathscr Y^{}_{\chi_{0}}} \qquad \qquad \text{in $\Lambda^-$}$$
up to an explicit and nonzero rational constant. Here $\iota$ is the involution $\widetilde{\chi}\mapsto \widetilde{\chi}^{-1}$ on $\mathscr{Y}_{V^{p}}$,  and
the 
$$  \widetilde{\mathscr P}^{\pm}(f^{\pm,p})\in A^{\pm }((\widetilde{\chi}_{\rm univ})^{\pm 1})$$
 are families of Heegner points  associated with $E/F$ and   (the limits of   certain sequences of) parametrisations $f^{\pm, p}$  of $A$ and $A^{\vee}$ by  a (tower of) Shimura curves.
% -- more precisely, with a compatible family of such paramertrisations whose members satisfy a certain relation for the ÓGalois and Hecke actions at $p$ on CM points.
  The term $ \widetilde{\mathscr Q}(\cdot, \cdot)\in \Lambda^-$ is a product of local terms at primes not dividing  $p$. 
  
  By results of Tunnell and Saito explained in \cite[Introduction]{Di}, under the assumption \eqref{sign sigma} for each $\chi\in \mathscr{Y}_{\chi_{0}}$ we may find families of Shimura curve parametrisations  $f^{\pm, p}$ such that  $ \widetilde{\mathscr Q}(f^{+,p}, f^{-, p})(\chi)\neq 0$. Applying this result to a character $\chi$ in the image $ \mathscr{Y}_{\chi_{0}, \circ}$ of $\mathscr{Y}_{\circ}^{-}$ under the isomorphism $\mathscr{Y}^{-}\to \mathscr{Y}_{\chi_{0}}$, we find $f^{\pm, p}$ such that  $ \widetilde{\mathscr Q}(f^{+,p}, f^{-, p})|_{ \mathscr{Y}_{\chi_{0}, \circ}}\neq0$. 
%  It follows that 
%   is not identically vanishing on $\mathscr{Y}_{\chi_{0}}$ provided the s chosen appropriately.
%   be the rigid space denoted by $\mathscr Y$, and $i^{\circ}_{\chi_{0}\colon \mathscr Y^{-}\hookrightarrow  \widetilde{\mathscr Y}^{-}$ be the inclusion $\chi\mapsto \chi_{0}^{-1}\chi}$, which identifies the source with a connected component of the target.   in \emph{loc. cit.}, and  In the notation employed there,  
Up to constants in $L^{\times}$, we have   $L_{p}'(\sigma_{E}\otimes \chi_{0}^{-1})(\chi) =- \widetilde{L}_{p}'(\sigma_{E})(\chi_{0}^{-1}\chi^{-1})$.
Then
%again  up to constants in $L^{\times}$, 
we may choose, using the identification \eqref{identify}
$$\mathscr P_{\chi_{0}}\otimes \mathscr{P}_{\chi_{0}^{}}^{\vee}:=
- \widetilde{\mathscr Q}(f^{+,p}, f^{-,p})|^{-1}_{\mathscr{Y}_{\chi_{0}, \circ}}\cdot
\widetilde{\mathscr P}(f^{+})|_{\mathscr{Y}_{\chi_{0}, \circ}} \otimes\widetilde{\mathscr P}(f^{-})^{\iota}|_{\mathscr{Y}_{\chi_{0}, \circ}}.$$ 
%where $f^{\pm}$ are modular parametrisations such that $\widetilde{\mathscr Q}(f^{+,p}, f^{-,p})|_{\mathscr Y^{}_{\chi_{0}}}\neq 0$.

There are four conditions to be verified in order to be able to invoke the result of \cite{Di}.\footnote{It is crucial here that in \cite{Di}  the sets of ramified primes of $E/F$ and of bad-reduction primes for $A$ are not required to be disjoint.} The first  one is (weaker than) the potential ordinariness of $A$, which can be verified after base-change to $E$ where it becomes the converse to part (1) of Lemma \ref{K*}. The second one is that all primes of $F$ above $p$ split in $E$: this is  satisfied by Lemma \ref{split}. Finally, the conditions on the central character and root number are satisfied by Remark \ref{satisf}.
\end{proof}

\subsection{Non-vanishing of $p$-adic $L$-functions} 
%Let  $\wp$ be a prime  of $F$ above $p$. 

%[[rewrite/ cf BDP]]

Let  $\lam$  be the character fixed in the Introduction.

\begin{thm}\label{nvt1} 
%Let $\lam'$ be a Hecke character  of $E$ satisfying  \eqref{SDrel}, let $\Sigma'$ be its CM type, and l
Let $\chi=\chi_{0}/\chi_{0}^{*}$ be as in Lemma \ref{twist}.
For every $\wp\vert p$, the restriction of the anticyclotomic  Katz $p$-adic L-function 
$$L_{\Sigma_{E},\lam\chi_{0}^{*}\chi_{0}^{-1}}|_{\mathscr{Y}^{-}_{\wp}}$$
does not vanish.
\end{thm}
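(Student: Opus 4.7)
The plan is to deduce Theorem~\ref{nvt1} as a direct consequence of the $\mu$-invariant / non-vanishing theorems of Hida and Hsieh \cite{Hi3, Hs1}, or more precisely of their refinement by the first author \cite{Bu, BuHi} which handles restrictions to the $\wp$-anticyclotomic $\Z_p$-subextension $\mathscr{Y}_\wp^-$ of the full anticyclotomic tower.

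First, I would identify the branch character. Setting $\chi = \chi_0/\chi_0^*$ as in Lemma~\ref{twist}, the character $\lambda\chi_0^*\chi_0^{-1} = \lambda\chi^{-1}$ is a locally algebraic $p$-adic Hecke character of $E$ with the same infinity type $\Sigma_E^{(w)}$ as $\lambda$. Using that $\chi$ is anticyclotomic, i.e.\ $\chi^* = \chi^{-1}$, together with the self-duality \eqref{SDrel} for $\lambda$, one checks that $\lambda\chi^{-1}$ is itself self-dual: $(\lambda\chi^{-1})^* = \lambda^*\chi = \lambda^{-1}|\cdot|^{-1}\chi = (\lambda\chi^{-1})^{-1}|\cdot|_{\A_E}^{-1}$. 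Consequently $L_{\Sigma_E, \lambda\chi^{-1}}|_{\mathscr{Y}_\wp^-}$ is the restriction of an anticyclotomic Katz $p$-adic $L$-function attached to a self-dual CM branch character, and carries an intrinsic functional equation along $\mathscr{Y}_\wp^-$.

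Second, one has to check that this functional equation does not force the restriction to vanish identically. By Lemma~\ref{twist}, the global root number satisfies $w(\lambda\chi) = +1$. A short local computation using the explicit formula \cite[(3.4.6)]{Ta} at each finite place (noting that at places where $\chi$ is unramified its twist and its inverse-twist contribute the same local epsilon factor, while at ramified places the local root numbers of $\lambda\chi^{\pm 1}$ can be matched using the same argument as in the proof of Lemma~\ref{twist}) shows that $w(\lambda\chi^{-1}) = +1$ as well. Thus the sign in the functional equation of $L_{\Sigma_E, \lambda\chi^{-1}}$ along $\mathscr{Y}_\wp^-$ is $+1$, and there is no a priori obstruction to non-vanishing.

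With these preliminaries in place, the conclusion follows from the refined non-vanishing theorem of \cite{BuHi}: under the running hypotheses of Theorem~\ref{MT}, namely $p \nmid 2 D_F h_E^-$ and $E/F$ ramified together with the local conditions that persist after twisting by $\chi^{-1}$, the $\mu$-invariant of $L_{\Sigma_E, \lambda\chi^{-1}}|_{\mathscr{Y}_\wp^-}$ vanishes; in particular this restriction is nonzero. The main obstacle is the book-keeping step of verifying that the precise hypotheses of the Burungale--Hida--Hsieh non-vanishing result, especially the conditions on the branch character at primes above $p$ and at the ramified primes of $E/F$, are preserved after the auxiliary twist by $\chi^{-1}$; here we exploit the freedom in the choice of $\chi_0$ afforded by Lemma~\ref{twist}.
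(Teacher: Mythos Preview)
Your approach is essentially the same as the paper's: verify that the branch character $\lambda\chi_0^{*}\chi_0^{-1}=\lambda\chi^{-1}$ is self-dual with root number $+1$, then invoke the non-vanishing results of \cite{Hi3,Hs1} refined by \cite{BuHi}. The paper compresses all of this into the single sentence ``By construction, $w(\lambda\chi_0^{*}\chi_0^{-1})=+1$; the theorem thus follows from \cite[Thm.~B]{BuHi} combined with \cite{Hi3,Hs1}.''

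Two small remarks. First, your root-number paragraph is actually more honest than the paper's: Lemma~\ref{twist} literally produces $w(\lambda\chi)=+1$, whereas what is needed here is $w(\lambda\chi^{-1})=+1$. Your observation that a short local check (or, equivalently, the identity $w(\mu)=w(\mu^{*})$ together with self-duality) closes this gap is correct; the paper's ``by construction'' sweeps this under the rug. Second, you slightly over-state the hypotheses: the paper explicitly notes that for this theorem only $p\nmid 2D_F$ is used, not the full $p\nmid 2D_F h_E^{-}$ (the relative class number condition enters only in Theorem~\ref{nvt2}), and the relevant reference here is \cite{BuHi} together with \cite{Hi3,Hs1} rather than \cite{Bu}.
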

%[[aug3 add assumptions on $\chi_{0}$ todo]]
\begin{proof}
By construction, $w(\lam\chi_{0}^{*}\chi_{0}^{-1})=+1$.
% $w(\lam^{*}\chi_{0}^{*}\chi_{0}^{-1}$ is self-dual with infinity type $\Sigma_{E}c$ and root number $1$. 
The theorem thus follows from \cite[Thm. B]{BuHi} combined with the main results of \cite{Hi3} and \cite{Hs1}. 
Here we only use the hypothesis $p \ndivide 2D_{F}$.
\end{proof}

\begin{thm}\label{nvt2} 
%Let $\chi_{0}$ be as in Lemma\ref{twist}.
For every $\wp\vert p$, the restriction of the cyclotomic derivative 
$$
L_{\Sigma_{E}c, \lam^{*}}'|_{\mathscr{Y}^{-}_{{\wp}}}
$$
does not vanish.
\end{thm}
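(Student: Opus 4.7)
The plan is to invoke non-vanishing theorems for cyclotomic derivatives of Katz $p$-adic $L$-functions, in the same spirit as the proof of Theorem \ref{nvt1} but using a result about derivatives rather than values.

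First I would observe via \eqref{SDrel} and \eqref{root num} that $w(\lambda^{*}\chi) = -1$ for every finite-order anticyclotomic $\chi$ (self-duality and root number are preserved under anticyclotomic twist), so the interpolated complex central values vanish identically along anticyclotomic twists. Thus $L_{\Sigma_{E}c, \lambda^{*}}$ vanishes on $\mathscr{Y}^{-}$, and the cyclotomic derivative is indeed the relevant object.

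The proof itself should then be a direct application of the main results of \cite{Bu} (or their refinement in \cite{BuHi}), which were foreshadowed in the introduction as the non-vanishing results for \emph{derivatives} of Katz $p$-adic $L$-functions needed for Theorem \ref{MT}. The underlying method, going back to Hida and Hsieh and ultimately relying on Chai's theorems \cite{Ch1,Ch2,Ch3} on Hecke-stable subvarieties of mod $p$ Shimura varieties (Igusa towers over Hilbert modular varieties), should apply to the self-dual character $\lambda^{*}$ under precisely the hypothesis $p\ndivide 2D_{F}h_{E}^{-}$ assumed in Theorem \ref{MT}.

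As a sanity check, one can verify consistency via the factorisation of Lemma \ref{p factor}: taking cyclotomic derivatives along $\mathscr{Y}^{-}_{\wp}$ and using the product rule together with the vanishing of $L_{\Sigma_{E}c, \lambda^{*}}$ on $\mathscr{Y}^{-}$ yields
$$L_{p}'(\sigma_{E}\otimes \chi_{0}^{-1})|_{\mathscr{Y}_{\wp}^{-}} \doteq L_{\Sigma_{E}, \lambda\chi}|_{\mathscr{Y}_{\wp}^{-}} \cdot L'_{\Sigma_{E}c, \lambda^{*}}|_{\mathscr{Y}_{\wp}^{-}};$$
by Theorem \ref{nvt1} the first factor on the right is nonzero, so non-vanishing of the left side would equivalently imply Theorem \ref{nvt2}. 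Going in this direction would route through Theorem \ref{gzf} and non-vanishing of a big Heegner-point height, which is circular for the main theorem—so the clean route is the direct citation of \cite{Bu,BuHi}.

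The main obstacle I foresee is bookkeeping: verifying that the precise technical hypotheses of \cite{BuHi} (infinity type, tame conductor, local ramification at primes above $p$, non-triviality of the relevant $\mu$-invariant) apply to $\lambda^{*}$ with the parity prescribed by $w(\lambda^{*}) = -1$. Given that assumptions \eqref{cc}, \eqref{root num}, \eqref{ord} and the class-number condition $p \ndivide 2 D_F h_E^-$ place our setup squarely in the scope of those theorems, this verification should reduce to a careful matching of notations rather than new input.
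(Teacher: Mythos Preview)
Your proposal is correct and matches the paper's own proof almost exactly: the paper simply recalls that $\lambda^{*}$ is self-dual with infinity type $\Sigma_{E}c$ and root number $-1$, then cites \cite[Thm.~C]{BuHi} combined with the main result of \cite{Bu}, noting that this is where the hypothesis $p\ndivide 2D_{F}h_{E}^{-}$ is used. Your additional ``sanity check'' via the factorisation of Lemma~\ref{p factor} is not in the paper's proof of this theorem (it appears later in the proof of Theorem~\ref{MT2}), but your observation that going through it here would be circular is exactly right.
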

\begin{proof}
Recall from the Introduction that $\lam^{*}$ is self-dual with infinity type $\Sigma_{E}c$ and root number $-1$.
The theorem thus follows from \cite[Thm. C]{BuHi} combined with the main result of \cite{Bu}. 
Here we use the hypothesis $p \ndivide 2D_{F}h_{E}^{-}$.
\end{proof}
%\begin{thm}\label{nvt} The cyclotomic derivative  
%$$L_{p}'(\sigma_{E}\otimes \chi_{0}^{-1})|_{\mathscr{Y}_{\mathfrak{p}}}$$
%does not vanish.
%\end{thm}

%\begin{proof} 
%Recall that $\lam$ is self-dual with root number $-1$. 
%In view of the interpolation property of the anticyclotomic Katz $p$-adic L-function $L_{\Sigma_{E}}(\lam)^-:=L_{\Sigma_{E}}(\lam)|_{\mathscr{Y}^{-}}$, we have 
%$$
%L_{\Sigma_{E},\lam}^{-}=0.
%$$
%From the factorisation \eqref{eq fact}, we conclude that 
%$$
%L_{p}'(\sigma_{E}\otimes \chi_{0}^{-1})\doteq L'_{\Sigma_{E},\lam}\cdot L_{\Sigma_{E},\lam^{*}\chi_{0}^{*}\chi_{0}^{-1}}^{-}.
%$$
%In particular,
%$$
%L_{p}'(\sigma_{E} \otimes \chi_{0}^{-1})|_{\mathscr{Y}_{\mathfrak{p}}}= 
%L'_{\Sigma_{E},\lam}|_{\mathscr{Y}_{\mathfrak{p}}}\cdot L_{\Sigma_{E},\lam^{*}\chi_{0}^{*}\chi_{0}^{-1}}^{-}|_{\mathscr{Y}_{\mathfrak{p}}}.
%$$
%From \cite[Thm. C]{BuHi} and \cite[Thm. A]{Bu}, we have
%$$
%L'_{\Sigma_{E},\lam}|_{\mathscr{Y}_{\mathfrak{p}}} \neq 0.
%$$
%Here we use the hypothesis that $p \ndivide 2D_{F}h_{E}^{-}$. 
%From \cite[Thm. B]{BuHi} and the main results of \cite{Hi3} and \cite{Hs1}, we have
%$$
%L_{\Sigma_{E},\lam^{*}\chi_{0}^{*}\chi_{0}^{-1}}^{-}|_{\mathscr{Y}_{\mathfrak{p}}} \neq 0.
%$$
%Here we use the hypothesis that $p \ndivide 2D_{F}$ and the fact that $\lam^{*}\chi_{0}^{*}\chi_{0}^{-1}$ is self-dual with root number $1$.
%fixme ([[add internal reference]]).\\
%This finishes the proof.
%\end{proof}

\subsection{Proofs of main theorems }
We introduce the useful category
$$\mathcal{CM}_{E,(K, \Sigma)},$$
described as follows. The  objects are abelian varieties $B$  over $E$ of dimension equal to  ${d\over 2}[K:\Q]$ for some $d\geq 1$,  together with an inclusion $i\colon R\hookrightarrow {\rm End}^{0}(B)$  of a  $K$-algebra $R$  of dimension $d$ such that the  type of $i$ is     $(K, d\Sigma)$. For two objects $B=(B, R,i)$, $B'=(B,  R',i')$   of $\mathcal{CM}_{E,(K, \Sigma)}$, let $R^{\circ }\subset R$ and $R^{\circ'}\subset R'$ be finite-index   subrings  in the integral closure of $\mathscr{O}_{K}$ in $R$, $R'$ whose image by $i$, $i'$ is contained in ${\rm End}(B)$, ${\rm End}(B')$ respectively. Let
$${\rm preHom}_{\mathcal{CM}_{E, (K, \Sigma)}}(B, B')$$
be the set of pairs of  morphisms $(f, \gamma)$ with  $f\colon B\to B'$, $\gamma\colon R\to R'$ such that $i'(\gamma(r))\circ f=f\circ i(r)$ for any $r\in R$. This is a module over a sufficiently small order $\mathscr{O} $ in $K$, and we let
$${\rm Hom}_{\mathcal{CM}_{E, (K, \Sigma)}}(B, B'):=
{\rm preHom}_{\mathcal{CM}_{E, (K, \Sigma)}}(B, B')\otimes_{\mathscr{O}}K.$$

If $(B, i, R)$ is an object of ${\mathcal{CM}_{E, (K, \Sigma)}}$ and $T$ is a finite-dimensional $K$-algebra, then Serre's construction provides a well-defined isomorphism class $(B\otimes_{K}T, i\otimes {\rm id}_{T}, R\otimes_{K}T)$  of objects in ${\mathcal{CM}_{E, (K, \Sigma)}}$, with action by the $K$-algebra $R\otimes_{K}T$.

Note that in $\mathcal{CM}_{E,(K, \Sigma)}$  any object $(B, i, R)$ is isomorphic to one such that ${\rm End}(B)$ contains the integral closure  $R^{\rm int}$ of $\OO_{K}$ in $R$: namely, if $R^{\circ}\subset R^{\rm int}$ is an order conatined in ${\rm End}(B)$ we may take $B':={\rm Hom}_{R^{\circ}}(R^{\rm int}, B)$. 
Given an  object $(B, i, R)$ of $\mathcal{CM}_{E, (K, \Sigma)}$ and a finite-order character $\chi\colon  \Gal(H_{\chi}/E)\to K^{\times}$   we define the twist 
$$B\otimes_{K}K(\chi)$$
 (an object of  $\mathcal{CM}_{E, (K, \Sigma)}$ with the  $R$-action of induced from the one on $B$) as follows. Assume  that $R\supset \OO_{K}$, which as noted above  is not restrictive. Then we may regard $\chi$ as an element in  $H^{1}(\Gal(H_{\chi}/E),  \OO_{K}^{\times})\subset  H^{1}(\Gal(H/E), {\rm Aut}_{K}(B))$; the abelian variety $B\otimes_{K} K(\chi)$ is the corresponding inner twist (denoted by $B^{\chi}$ in \cite{BDP}), so that for any finite character $\chi'\colon {\rm Gal}(\overline{E}/E)\to K^{\times}$ we have $(B\otimes K(\chi)(\chi')= B(\chi\chi')$.  
%(The reader may think of $K(\chi)$ as a $K$-line with the ${\rm Gal}(H/E)$-action by $\chi$.)

\medskip

%we regard define the Galois twist $B\otimes_{K} K(\chi)$ to be the  object of $\mathcal{CM}_{E, (K, \Sigma)}$ corresponding to the cocycle in $H^{1}(\Gal(H/E), {\rm Aut}_{K}(B))$ corresponding to $\chi$

Let us now return to our usual setting, so that $B=B_{\lambda}$. It is an object of ${\mathcal{CM}_{E, (K, \Sigma)}}$ with $R=K$.    (Note that, as the validity of the statements  we are interested in is invariant under $K$-linear isogenies, it is appropriate to work in this category.)
% Let $A=A_{\sigma}$ be the abelian variety associated with $\sigma=\theta(\psi) $ as in \ref{sec: theta lift}.
Let $\chi_{0}\colon {\rm Gal}(\overline{E}/E)\to K'^{\times}$ be a finite-order character, where $K'$ is the CM extension of  $K$ fixed above. Let $\psi:=\lambda \chi_{0}^{-1}$ and let $A=A_{\sigma}$ be the abelian variety associated with $\sigma=\theta(\psi) $ as in \ref{sec: theta lift}.
The abelian varieties $A_{\sigma}$ and  $B_{\psi, K'}$ have CM by $K'$. 
%For a $K'^{\times}$-valued character $\chi_{1}$ of ${\rm Gal}(H/E)$, let $K'(\chi_{1})$ denote $K'$ equipped with the ${\rm Gal}(H/E)$-action by $\chi_{1}$. 

\begin{lm}\label{b and a}  There is an isomorphism in ${\mathcal{CM}_{E, (K', \Sigma)}}$
%$$f\colon B_{\lambda/H}\otimes_{K} K'(\one)\to B_{\psi, K'/H}\otimes_{K'} K'(\chi_{0}^{-1})\cong A_{H}^{\oplus r} \otimes_{K} K'(\chi_{0}^{-1})$$
$$f\colon B_{\lambda}\otimes_{K} K'\to B_{\psi, K'}\otimes_{K'} K'(\chi_{0}^{-1})\cong A_{E}^{\oplus r} \otimes_{K} K'(\chi_{0}^{-1}).$$
%which is equivariant for the natural ${\rm Gal}(H/E)$-action on both sides.
\end{lm}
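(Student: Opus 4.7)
The plan is to construct both isomorphisms by matching Hecke characters and invoking the uniqueness assertion in Casselman's theorem recalled in \S\ref{sec: cm}.

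For the first isomorphism, observe that $B_\lambda \otimes_K K'$, interpreted via Serre's construction, acquires CM by $K'$ with associated Hecke character $\lambda$ regarded with values in $K'$ through the fixed inclusion $K \hookrightarrow K'$; this is the compatibility of Serre's construction with Casselman's correspondence. On the other hand, the convention $(B \otimes K'(\chi))(\chi') = B(\chi \chi')$ stated just above the lemma implies that twisting by $K'(\chi)$ multiplies the associated Hecke character by $\chi^{-1}$ (for if $B(\chi')\neq 0$ picks out the Hecke character $\lambda$ of $B$, then $(B\otimes K'(\chi))(\chi')\ne 0$ picks out $\chi^{-1}\lambda$). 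Since $B_{\psi, K'}$ has Hecke character $\psi = \lambda \chi_0^{-1}$, it follows that $B_{\psi, K'} \otimes_{K'} K'(\chi_0^{-1})$ has Hecke character $\chi_0 \cdot \psi = \lambda$. Therefore both sides correspond to the same $K'$-valued Hecke character $\lambda$, and Casselman's theorem supplies a $K'$-linear isogeny between them, i.e., an isomorphism in $\mathcal{CM}_{E, (K', \Sigma)}$.

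The second isomorphism then follows by tensoring the $K$-linear isogeny \eqref{bpsi}, namely $B_{\psi, K'} \sim A_E^{\oplus r}$, with $K'(\chi_0^{-1})$. Here $A_E$ is viewed as an object of $\mathcal{CM}_{E,(K,\Sigma)}$ via its acquired CM by $K$ upon base change from $F$ to $E$, and the notation $A_E^{\oplus r}\otimes_K K'(\chi_0^{-1})$ combines the Serre scalar extension $\otimes_K K'$ with the subsequent $K'$-linear twist. One must check that Serre's tensor product commutes with the twist $\otimes K'(\chi_0^{-1})$ in the appropriate sense and that the resulting identification respects the $K'$-CM structures.

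The main technical obstacle is verifying that all these manipulations are internal to the category $\mathcal{CM}_{E, (K', \Sigma)}$ rather than only valid up to plain isogeny: the isogenies produced via Casselman's theorem and via \eqref{bpsi} must be compatible with the endomorphism data $(i, R)$ carried by the objects, which amounts to routine bookkeeping with the sign conventions spelled out above and with the observation that on any given object of $\mathcal{CM}_{E, (K', \Sigma)}$ we may replace the integral lattice so that $\mathscr{O}_{K'}$ acts, making the twist by a finite-order character well-defined.
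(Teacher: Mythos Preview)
Your proposal is correct and follows the same approach as the paper: the paper's proof simply says that the second isomorphism is \eqref{bpsi} and that the first is ``based on Casselman's theorem, entirely analogous to the proof of \cite[Lemma 2.9]{BDP},'' which is exactly the Hecke-character matching argument you spell out. Your added bookkeeping about sign conventions for twists and compatibility within $\mathcal{CM}_{E,(K',\Sigma)}$ is the routine verification the paper leaves implicit.
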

\begin{proof} The second isomorphism is \eqref{bpsi}. The  proof of the first one,  based on Casselman's theorem, is entirely analogous  to the proof of \cite[Lemma 2.9]{BDP}.
\end{proof}
%[[replace the following  last paragraph with proofs of Theorem \ref{MT2} and \ref{MT-text} below  ]]

%\subsection{Main theorems} 
%
%\subsubsection{Proof of Theorem \ref{MT3}.}
%We prove the generalisation of Rubin's formula of Thereom \ref{MT3}. 
%Let $\chi_{0}$ be as in Lemma \ref{twist}, 
% let $A=A_{\theta(\lam\chi_{0}^{-1})}$ and let
%$$P_{\chi_{0}}\otimes P^{\vee}_{\chi_{0}} \in A_{E}(\chi_{0}^{-1})\otimes_{K'}A_{E}^{\vee}(\chi_{0})$$
%be as in Theorem \ref{pWf}.  
% Let $i_{1}\colon A\times A^{\vee}\hookrightarrow A^{\oplus r}\times A^{\vee \oplus r}$ denote the inclusion of the first factor. Let $f$ be as in Lemma \ref{b and a} and set
%$$P\otimes P^{\vee}:= 
%{\Omega_{\Sigma_{E}}\over L_{\Sigma_{E}}(\lam \chi_{0}\chi_{0}^{*-1})}\cdot 
%(f^{-1}\otimes f^{\vee})(i_{1} (P_{\chi_{0}}\otimes P^{\vee}_{\chi_{0}})) \in B(E)_{\Q} \otimes_{K} B^{\vee}(E)_{\Q}\otimes_{K}\overline{\Q}.$$
%
%[[aug 9. Todo: after checking periods and having written a correct factorisation, write a correct Rubin formula and complete the proof [factorization + pWf]  analogously to the proof of thm \ref{MT2} below. ]]
%
%

%As in the introduction, let the subscript  $\circ$ denote  either nothing or $\wp$ for some $\wp\vert p$.

\begin{proof}[Proof of Theorem \ref{MT2}] We prove the
$p$-adic Gross--Zagier formula of Theorem \ref{MT2}.
%[[Restate the Theorem]]
%\begin{theo} \
%\end{theo}
%[[begin proof of theorem]]
%We prove 
Let $\chi_{0}$ be as in Lemma \ref{twist}
 for $\lam':=\lam^{*}$,  let $A=A_{\theta(\lam\chi_{0}^{-1})}$ and let 
$$\mathscr P_{\chi_{0}}\otimes \mathscr P^{\vee}_{\chi_{0}}\in A_{E}(\chi_{0}^{-1}\chi^{-1}_{\rm univ, {\circ}} ) \otimes_{\Lambda_{{\circ}}^{-}} A_{E}^{\vee}(\chi_{0}\chi_{\rm univ, \circ})\otimes_{\Lambda_{{\circ}}^{-}} {\mathscr{K}_{{\circ}}^{-}}$$
% $\mathscr{P}^{\vee}_{\chi_{0}})\otimes \mathscr P^{\vee}_{\chi_{0}}$
  be as in Theorem \ref{gzf}.   Let $f$ be as in Lemma \ref{b and a} and let 
$$\mathscr{P}\otimes \mathscr{P}^{\vee}:={L_{\Sigma_{E}c,\lam^{*}\chi_{0}\chi_{0}^{*-1}}}|_{\Y_{{\circ}}^{-}}^{-1} \cdot
( f^{-1}\otimes f^{\vee})
(i_{1}(\mathscr{P}^{\vee}_{\chi_{0}}\otimes \mathscr P^{\vee}_{\chi_{0}}))|_{\Y_{{\circ}}^{-}},$$
which is an element of $B(\chi^{-1}_{\rm univ, {\circ}})_{\mathscr{K}_{{\circ}}^{-}}\otimes B(\chi_{\rm univ, {\circ}})_{\mathscr{K}_{{\circ}}^{-}}.$
%\otimes_{\Lambda_{\wp}^{-}}B(\chi^{-1}_{\rm univ})\otimes_{\Lambda_{*}^{-}}\mathscr{K}_{*}^{-}$
 by Theorem \ref{nvt1}.

% Note that, by the existence of the pairing \ref{bigHP2}, it suffices to show the existence of \emph{one} finite order character $\chi$ of $\Gamma_{\mathfrak p}$ such that \eqref{HPchi} does not vanish.
% Let $\chi\in \mathscr{Y}_{\mathfrak{p}}$ be a finite order character such that $L_{p}'(\sigma_{E}\otimes \chi_{0}^{-1})(\chi)\neq 0$, whose existence is guaranteed by Theorem \ref{nvt}. We may view $\chi$ as valued in a finite extension $K''$ of $K'$, thus $B(\chi)$, $B_{\psi}(\chi_{0}^{-1}\chi)$ are $K''$-vector spaces (note that an extension of coefficient fields  does not affect  our  main theorem). Let $P_{1}$, $P_{2}$ be as in Theorem \ref{gzf}, viewed as points in $B_{\psi,K'}(\chi_{0}^{-1}\chi)$ via any inclusion $A(\chi_{0}^{-1}\chi')\hookrightarrow A(\chi_{0}^{-1}\chi)^{\oplus r}$.
 
% For any points  Let 
%$$Q_{1}:=f^{-1}(P_{1})\in B(\chi),\qquad Q_{2}:= f^{\vee}(P_{2})\in B^{\vee}(\chi^{-1})$$
By the projection formula for heights \cite{MT}, for any $P_{1}\in B(\overline{E})$, $P_{2}\in B^{\vee}(\overline{E})$ we have 
$$\langle f^{-1}(P_{1}),  f^{\vee}(P_{2})\rangle_{B}=\langle P_{1}, P_{2}\rangle_{B_{\psi, K'}} = \langle P_{1}, P_{2}\rangle_{A_{E}^{\oplus r}}.$$
As maps of finitely generated  $\Lambda_{{\circ}}^{-}$-modules  are determined by their specialisations at finite order characters, this implies the analogous result for big height pairings. 
%Let $\chi_{0}$ be as in Lemma \ref{twist} for the character $\lam=\lam^{{\circ}}$, and let
%%By Theorem \ref{gzf} and the choice of $\chi$, this is nonzero. This completes the proof of the main theorem.
%%$$\mathscr{P}\otimes \mathscr{P}^{\vee}:={L_{\Sigma_{E},\lam\chi_{0}^{{\circ}}\chi_{0}^{-1}}}|_{\Y^{-}}^{-1} \cdot f^{-1}(\mathscr{P}^{\vee}_{\chi_{0}})\otimes f^{\vee}(\mathscr P^{\vee}_{\chi_{0}}).$$
%$$\mathscr{P}\otimes \mathscr{P}^{\vee}:={L_{\Sigma_{E}c,\lam^{{\circ}}\chi_{0}\chi_{0}^{*-1}}}|_{\Y_{\wp}^{-}}^{-1} \cdot f^{-1}(\mathscr{P}^{\vee}_{\chi_{0}})\otimes f^{\vee}(\mathscr P^{\vee}_{\chi_{0}}),$$
%which is an element of $B(\chi_{\rm univ})\otimes_{\Lambda_{\wp}^{-}}B(\chi^{-1}_{\rm univ})\otimes_{\Lambda_{\wp}^{-}}\mathscr{K}_{\wp}^{-}$ by Theorem \ref{nvt1}.
Then Theorem  \ref{MT2} follows from  Theorem \ref{gzf} and the factorisation \eqref{factor}.
\end{proof}
%[[end of proof of theorem]]

%Here we crucially use the non-vanishing result of 
%%fact that ${L_{\Sigma_{E}}(\lam\chi_{0}^{*}\chi_{0}^{-1})}\neq 0$ by 
%Theorem \ref{nvt1}.
\subsubsection*{Proof of Theorem \ref{MT}}
We state and prove  the following slightly more precise version of Theorem \ref{MT}.  Recall from the Introduction that we say that a property $P$ holds \emph{for almost all} finite-order characters in $\mathscr{Y}^{-}_{{\circ}}$ if the set of those $\chi$ not satisfying $P$ is not Zariski dense in $\mathscr{Y}_{{\circ}}^{-}$. 

We keep the assumption of Theorem \ref{MT}.
\begin{thm}\label{MT-text} For almost all finite-order  $\chi\in \mathscr{Y}_{\circ}^{-}$, we have 
$$L_{\Sigma_{E}}'(\lam\chi)\neq 0,$$
the specialisation $\mathscr P \otimes \mathscr P^{\vee}(\chi)$ is  a well-defined and  non-zero element of $B(\chi^{-1})\otimes B^{\vee}(\chi)$, and
$$\langle \mathscr P\otimes \mathscr P^{\vee}(\chi)\rangle\neq 0.$$
\end{thm}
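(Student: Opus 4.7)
\smallskip

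\noindent\textbf{Plan.} The strategy is to combine Theorem \ref{MT2} with the two non-vanishing results, Theorems \ref{nvt1} and \ref{nvt2}, and to handle the non-vanishing of $L'_{\Sigma_E}(\lambda\chi)$ via a functional equation. The starting observation is that Theorem \ref{MT2} provides the equality
\[
\langle \mathscr{P}, \mathscr{P}^\vee\rangle_\circ = L'_{\Sigma_E c, \lambda^*}|_{\mathscr{Y}_\circ^-}
\]
in $\mathscr{K}_\circ^-$, while Theorem \ref{nvt2} asserts that the right-hand side is a non-zero element of $\Lambda_\circ^-$. Thus its vanishing locus is a proper Zariski-closed subscheme of $\mathscr{Y}_\circ^-$, and for every finite-order $\chi$ outside a non-Zariski-dense subset one has $L'_{\Sigma_E c, \lambda^*}(\chi)\neq 0$.

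Next, I would address the well-definedness and non-triviality of $\mathscr{P}(\chi)\otimes\mathscr{P}^\vee(\chi)$. Inspecting the construction of $\mathscr{P}\otimes\mathscr{P}^\vee$ carried out in the proof of Theorem \ref{MT2}, the only denominator obstructing specialization at a finite-order $\chi$ is the Katz $L$-function $L_{\Sigma_E c,\lambda^*\chi_0\chi_0^{*-1}}|_{\mathscr{Y}_\circ^-}$, which by Theorem \ref{nvt1} is a non-zero element of $\Lambda_\circ^-$; its zero locus is also not Zariski dense. Taking the union of this and the exceptional set from the previous paragraph still yields a non-Zariski-dense subset of $\mathscr{Y}_\circ^-$. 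For any $\chi$ outside that union, the specialization $\mathscr{P}(\chi)\otimes\mathscr{P}^\vee(\chi)$ is well-defined in $B(\chi^{-1})\otimes B^\vee(\chi)$, and specializing the Theorem \ref{MT2} identity at $\chi$ gives
\[
\langle \mathscr{P}(\chi),\mathscr{P}^\vee(\chi)\rangle = L'_{\Sigma_E c, \lambda^*}(\chi)\neq 0,
\]
which establishes simultaneously the non-vanishing of the height and the non-triviality of $\mathscr{P}(\chi)\otimes\mathscr{P}^\vee(\chi)$.

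The remaining and most delicate step is the non-vanishing of $L'_{\Sigma_E}(\lambda\chi)$. The natural route is via the functional equation of the Katz $p$-adic $L$-function in families along $\mathscr{Y}^-$, relating the infinity types $\Sigma_E$ and $\Sigma_E c$. Combined with the self-duality $\lambda^*=\lambda^{-1}|\cdot|^{-1}$ and the anticyclotomicity $\chi^*=\chi^{-1}$, this should produce an identity in $\mathscr{K}_\circ^-$ of the shape
\[
L'_{\Sigma_E,\lambda}(\chi) \doteq \epsilon(\chi)\cdot L'_{\Sigma_E c,\lambda^*}(\chi^{-1}),
\]
where $\epsilon(\chi)$ is a product of local interpolation factors that is non-vanishing on a Zariski-open subset of $\mathscr{Y}_\circ^-$. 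Since $\chi\mapsto\chi^{-1}$ is an involution on $\mathscr{Y}_\circ^-$, Theorem \ref{nvt2} applied at $\chi^{-1}$ then gives $L'_{\Sigma_E}(\lambda\chi)\neq 0$ for almost all $\chi$.

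The main obstacle I expect is Step three: making the functional equation precise as an identity of elements of $\Lambda_\circ^-$ (not merely of interpolated values), and controlling that the local $\epsilon$-factors entering the relation do not cause a Zariski-dense locus of zeros on $\mathscr{Y}_\circ^-$. Once this is in place, assembling the three steps and taking the union of the three exceptional (non-dense) loci yields the theorem.
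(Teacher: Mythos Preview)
Your proposal is correct and follows essentially the same route as the paper: invoke Theorem~\ref{nvt2} for the non-vanishing of the cyclotomic derivative, Theorem~\ref{nvt1} for the well-definedness of the specialised points, and then specialise the identity of Theorem~\ref{MT2} to obtain the non-vanishing of the height (and hence of the point).

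One remark on your ``delicate step'': the paper simply declares the first assertion \emph{equivalent} to Theorem~\ref{nvt2}, and the reason is simpler than the full functional equation you sketch. Complex conjugation $c$ on $E$ permutes the Euler factors, so $L(0,\chi^{-1})=L(0,(\chi^{*})^{-1})$ for any Hecke character $\chi$; transporting this through the interpolation shows $L_{\Sigma_{E}}(\lambda')\doteq L_{\Sigma_{E}c}(\lambda'^{*})$ as elements of $\Lambda^{\sharp}$ (up to a unit), with no $\varepsilon$-factors to control. Since $(\lambda\chi)^{*}=\lambda^{*}\chi^{-1}$ for anticyclotomic $\chi$, the cyclotomic derivative satisfies $L'_{\Sigma_{E},\lambda}(\chi)\doteq L'_{\Sigma_{E}c,\lambda^{*}}(\chi^{-1})$, and the involution $\chi\mapsto\chi^{-1}$ on $\mathscr{Y}^{-}_{\circ}$ transports generic non-vanishing. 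Note incidentally that this \emph{same} $c$-symmetry is what identifies the denominator $L_{\Sigma_{E}c,\lambda^{*}\chi_{0}\chi_{0}^{*-1}}$ appearing in the construction of $\mathscr{P}\otimes\mathscr{P}^{\vee}$ with the $L_{\Sigma_{E},\lambda\chi_{0}^{*}\chi_{0}^{-1}}$ of Theorem~\ref{nvt1}; you use this implicitly in your second step as well.
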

\begin{proof} 
The first assertion is equivalent to Theorem \ref{nvt2}. 
%proved in \cite[Thm. C]{BuHi}, see also \cite[Thm. A]{Bu}. 
That the points are generically well-defined at $\chi$ amounts to the assertion that ${L_{\Sigma_{E},\lam\chi_{0}^{*}\chi_{0}^{-1}}}|_{\mathscr Y_{\wp}^{-}}\neq 0$, which is Theorem \ref{nvt1}. 
%proved in \cite[Thm. B]{BuHi}; see also  the main results of \cite{Hi3} and \cite{Hs1} [[ashay:ref]].  
Finally the non-vanishing of $p$-adic heights follows from the other assertions and the Gross--Zagier formula of Theorem \ref{MT2}.
\end{proof}

\thebibliography{99}
%\bibitem{Bu2} A. Burungale, \emph{A conjectural linear independence of mod $p$ modular forms}, preprint, July 2012.
%\bibitem{Ca} H. Carayol, \emph{Sur la mauvaise r$\acute{e}$duction des courbes de Shimura},  Compositio Math.  59  (1986),  no. 2, 151-230. 
%\bibitem{B} D. Bump, \emph{Automorphic forms and representations}, Cambridge Studies in Advanced Mathematics, 55, Cambridge University Press, Cambridge, 1997. 
\bibitem{AN} E. Aflalo and J. Nekov\'{a}\v{r}, \emph{Non-triviality of CM points in ring class field towers}, With an appendix by Christophe Cornut. Israel J. Math. 175 (2010), 225–284.
\bibitem{AH} {A. Agboola} and
B. Howard,
\emph{Anticyclotomic Iwasawa theory of CM elliptic curves},
{Ann. Inst. Fourier (Grenoble)} {56}
(2006), {4},
\bibitem{BPR} D. Bernardi and B. Perrin-Riou, \emph{Variante $p$-adique de la conjecture de Birch et Swinnerton-Dyer (le cas supersingulier)}, C. R. Acad. Sci. Paris S\'er. I Math. 317 (1993), no. 3, 227--232.
\bibitem{BDP1} M. Bertolini, H. Darmon and K. Prasanna, \emph{Generalised Heegner cycles and $p$-adic Rankin L-series}, Duke Math Journal, Vol. 162, No. 6, 1033-1148. 
\bibitem{BDP} M. Bertolini, H. Darmon and K. Prasanna, \emph{$p$-adic Rankin L-series and rational points on CM elliptic curves}, Pacific J. Math. 260 (2012), no. 2, 261--303.
\bibitem{Be} D. Bertrand, \emph{Propriétés arithmétiques de fonctions thêta à plusieurs variables}, 
Number theory, Noordwijkerhout 1983, 17–22, Lecture Notes in Math., 1068, Springer, Berlin, 1984.
%\bibitem{BDP2} M. Bertolini, H. Darmon and K. Prasanna, \emph{Chow-Heegner points on CM elliptic curves and values of $p$-adic L-functions}, 
%Pacific Journal of Mathematics, Vol. 260, No. 2, 2012. 261-303.
%\bibitem{BDP3} M. Bertolini, H. Darmon and K. Prasanna, \emph{$p$-adic L-functions and the coniveau filtration on Chow groups}, preprint, 2013, 
%available at "http://www.math.mcgill.ca/darmon/pub/pub.html".
%\bibitem{Be} A. Besser, \emph{CM cycles over Shimura curves}, J. Algebraic Geom. 4 (1995), no. 4, 659–691. 
%\bibitem{Br1} M. Brakocevic, \emph{Anticyclotomic $p$-adic L-function of central critical Rankin-Selberg L-value}, IMRN, 
%Vol. 2011, No. 21, (2011), 4967-5018. 
%\bibitem{Br1} M. Brakocevic, \emph{Non-vanishing modulo $p$ of central critical Rankin-Selberg L-values with anticyclotomic twists}, preprint, 2011, 
%available at "http://www.math.mcgill.ca/\textasciitilde brakocevic". 
%\bibitem{Br} E. H. Brooks, \emph{Shimura curves and special values of $p$-adic L-functions}, to appear in IMRN (2014), doi: 10.1093/imrn/rnu062.
%\bibitem{Bu1} A. Burungale, \emph{$\mathfrak{p}$-rigidity and $\mathfrak{p}$-independence of quaternionic modular forms modulo $p$}, preprint, 2014.
\bibitem{Bu} A. Burungale, \emph{On the $\mu$-invariant of the cyclotomic derivative of a Katz $p$-adic L-function}, 
 J. Inst. Math. Jussieu 14 (2015), no. 1, 131–148.
\bibitem{BuHi} A. Burungale and H. Hida, \emph{$\mathfrak{p}$-rigidity and Iwasawa $\mu$-invariants}, Algebra $\&$ Number Theory, 11 (2017), 1921--1951.
\bibitem{Bu3} A. Burungale, \emph{On the non-triviality of generalised Heegner cycles modulo $p$, 
 II: Shimura curves}, J. Inst. Math. Jussieu 16 (2017),  189--222. 
 \bibitem{Bu2} A. Burungale, \emph{On the non-triviality of generalised Heegner cycles modulo $p$, 
 I: modular curves},  J. Alg. Geom., to appear.
 \bibitem{BuTi} A. Burungale and Y. Tian, \emph{$p$-converse to a theorem of Gross--Zagier, Kolyvagin and Rubin}, Invent. Math., to appear.
%\bibitem{Bu3} A. Burungale, \emph{On the non-triviality of generalised Heegner cycles modulo $p$, 
 %III}, in progress.
%\bibitem{C1} C. Cornut, \emph{Reduction de Familles de Points CMs}, Thesis, Strausbourg, 2000. 
%\bibitem{Ca} F. Castella, \emph{Heegner cycles and higher weight specializations of big Heegner points}, Math. Annalen 356 (2013), 1247-1282. 

%\bibitem{BuHs} A. Burungale and M.-L. Hsieh, \emph{The vanishing of the $\mu$-invariant of $p$-adic
%Hecke L-functions for CM fields}, to appear in IMRN,  available at "http://www.math.ntu.edu.tw/\textasciitilde mlhsieh/research.htm". 
%\bibitem{Bu} A. Burungale, \emph{On the $\mu$-invariant of the cyclotomic derivative of Katz $p$-adic L-function}, preprint, 2012. 
%Available at http://arxiv.org/abs/1304.4298.
%\bibitem{BuHi} A. Burungale and H. Hida, \emph{$\mathfrak{p}$-rigidity and Iwasawa $\mu$-invariants}, preprint, 2013.
\bibitem{BH} C. J. Bushnell and G. Henniart, The local Langlands conjecture for GL(2), \emph{Grundlehren der Mathematischen Wissenschaften [Fundamental Principles of Mathematical Sciences]}, vol. 335, Springer-Verlag, Berlin, 2006. 
\bibitem{Ca} H. Carayol, \emph{Sur la mauvaise r\'eduction des courbes de Shimura}, 
Compositio Math. 59 (1986), no. 2, 151--230.
\bibitem{Ch1} C.-L. Chai, \emph{Every ordinary symplectic isogeny class in positive characteristic is dense in the
moduli}, Invent. Math. 121 (1995), 439-479.
\bibitem{Ch2} C.-L. Chai, \emph{Families of ordinary abelian varieties: canonical coordinates, p-adic monodromy,
Tate-linear subvarieties and Hecke orbits}, preprint, 2003.
Available at http://www.math.upenn.edu/\textasciitilde chai/papers.html .
\bibitem{Ch3} C.-L. Chai, \emph{Hecke orbits as Shimura varieties in positive characteristic}, International Congress of Mathematicians. Vol. II,  295-312, Eur. Math. Soc., Zurich, 2006.
\bibitem{CCO} B. Conrad, C.-L. Chai and F. Oort, Complex multiplication and lifting problems, AMS Mathematical Surveys and Monographs, 2014.
\bibitem{C} C. Cornut, \emph{Mazur's conjecture on higher Heegner points}, Invent. Math. 148 (2002), no. 3, 495-523. 
%\bibitem{CV1} C. Cornut and V. Vatsal, \emph{CM points and quaternion algebras},  Doc. Math. 10 (2005), 263–309. 
\bibitem{CV2} C. Cornut and V. Vatsal, \emph{Nontriviality of Rankin-Selberg L-functions and CM points}, 
L-functions and Galois representations, 121–186, London Math. Soc. Lecture Note Ser., 320, Cambridge Univ. Press, Cambridge, 2007. 
%\bibitem{dJ} A. J. de Jong, \emph{Homomorphisms of Barsotti-Tate groups and crystals in positive characteristic},
%Invent. Math. 134 (1998), 301-333.
%\bibitem{De1} P. Deligne, \emph{Travaux de Shimura}, S$\acute{e}$minaire Bourbaki, $23\acute{e}$me ann$\acute{e}$e (1970/71), Exp. No. 389,  123-165. Lecture Notes in Math., Vol. 244, Springer, Berlin, 1971.
%\bibitem{De2} P. Deligne, \emph{Vari$\acute{e}$t$\acute{e}$s de Shimura: interpr$\acute{e}$tation modulaire, et techniques de construction de mod$\acute{e}$les canoniques},  Automorphic forms, representations and L-functions (Proc. Sympos. Pure %Math., Oregon State Univ., Corvallis, Ore., 1977), Part 2,  247-289, Proc. Sympos. Pure Math., XXXIII, Amer. Math. Soc., Providence, R.I., 1979.
%\bibitem{F} Olivier Fouquet, \emph{Iwasawa theory of nearly ordinary quaternionic automorphic forms}, Compos. Math. 149 (2013), no. 3, 356–416. 
%\bibitem{Gil} R. Gillard, \emph{Remarques sur l'invariant mu d'Iwasawa dans le cas CM}, S$\acute{e}$m. Th$\acute{e}$or. Nombres Bordeaux (2) 3 (1991), no. 1,
%13-26.
\bibitem{DR}{article} H. Darmon, V. Rotger, \emph{Diagonal cycles and Euler systems II: the Birch and Swinnerton-Dyer conjecture for Hasse-Weil-Artin $L$-series},  J. Amer. Math. Soc. { 30} (2017), 601--672.
\bibitem{Di1} D. Disegni, \emph{$p$-adic heights of Heegner points on Shimura curves}, Algebra \& Number Theory 9-7 (2015), 1571--1646.
\bibitem{Di} D. Disegni, \emph{The $p$-adic Gross--Zagier formula on Shimura curves}, Compos. Math. 153-10 (2017), 1987--2074.
\bibitem{Di-u} D. Disegni, \emph{The universal $p$-adic Gross--Zagier formula}, preprint, arXiv:2001.00045.
\bibitem{Gr} R. Greenberg, \emph{On the critical values of Hecke L-functions for imaginary quadratic fields}, 
Invent. Math. 79 (1985), no. 1, 79-94. 
%\bibitem{Gr} A. Grothendieck, \emph{Groupes de Barsott-Tate et cristaux de 
%Dieudonn$\acute{e}$}, S$\acute{e}$minaire de Math$\acute{e}$matiques Sup$\acute{e}$rieures, No. 45 ($\acute{E}$t$\acute{e}$, 1970). 
%Les Presses de l'Universit$\acute{e}$ de Montr$\acute{e}$al, Montreal, Que., 1974.
\bibitem{Ha} M. Harris, \emph{L-functions of $2\times2$ unitary groups and factorization of periods of Hilbert modular forms}, 
 J. Amer. Math. Soc. 6 (1993), no. 3, 637-719.
 \bibitem{Hi} H. Hida, \emph{On abelian varieties with complex multiplication as factors of the Jacobians of Shimura curves}, 
Amer. J. Math. 103 (1981), no. 4, 727–776. 
%\bibitem{HT1} H. Hida and J. Tilouine, \emph{Katz $p$-adic L-functions, congruence modules and deformation of Galois representations}, 
%L-functions and arithmetic (Durham, 1989), 271–293, London Math. Soc. Lecture Note Ser., 153, Cambridge Univ. Press, Cambridge, 1991.
\bibitem{HT} H. Hida and J. Tilouine, \emph{Anticyclotomic Katz $p$-adic L-functions and congruence modules},
Ann. Sci. Ecole Norm. Sup., (4) 26 (1993), no. $2$, 189-259. 
%\bibitem{HiF} H. Hida, \emph{On the search of genuine $p$-adic modular L-functions for $\GL_n$}, 
%Monograph, Memoires SMF 67, (1996). 
%\bibitem{GP} B. Gross and J. Parson, \emph{On the local divisibility of Heegner points}, Number theory, analysis and geometry, 215–241, Springer, New York, 2012.
%\bibitem{Hip} H. Hida, \emph{Non-vanishing modulo $p$ of Hecke L-values}, In "Geometric Aspects of Dwork Theory" (A. Adolphson, F. Baldassarri, P. Berthelot, N. Katz and F. Loeser, eds.), 
%Walter de Gruyter, Berlin, 2004, 735-784. 
%\bibitem{Hi1} H. Hida, \emph{p-Adic Automorphic Forms on Shimura Varieties}, Springer Monogr. in Math.,
%Springer-Verlag, New York, 2004. 
\bibitem{HiH} H. Hida, \emph{Hilbert modular forms and Iwasawa theory}, Oxford Mathematical Monographs, Oxford University Press, 2006. 
%\bibitem{HiM} H. Hida, \emph{Anticyclotomic Main Conjectures}, 
%in the Coates anniversary volume from Documenta Math. Ducumenta Math. Volume Coates (2006), 465-532.
%\bibitem{Hip1} H. Hida, \emph{Non-vanisihng modulo $p$ of Hecke L-values and applications}, 
%London Mathematical Society Lecture Note, Series 320 (2007), 207-269.
%\bibitem{Hi2} H. Hida, \emph{Irreducibility of the Igusa tower}, Acta Math. Sin. (Engl. Ser.) 25 (2009), 1-20.
%\bibitem{HiM1} H. Hida, \emph{Quadratic excercises in Iwasawa theory}, 
%IMRN (2009), no. 5, 912-952.
\bibitem{Hi3} H. Hida, \emph{The Iwasawa $\mu$-invariant of p-adic Hecke L-functions}, Ann. of Math. (2) 172 (2010),
41-137.
%\bibitem{HiV} H. Hida, \emph{Vanishing of the $\mu$-Invariant of $p$-Adic Hecke L-functions}, Compositio Math. 147 (2011), 1151-1178.
\bibitem{Hi4} H. Hida, \emph{Local indecomposability of Tate modules of non CM abelian varieties with real multiplication}, 
J. Amer. Math. Soc. 26 (2013), no. 3, 853–877.
%\bibitem{Hi5} H. Hida, \emph{Elliptic Curves and Arithmetic Invariants}, Springer Monogr. in Math., Springer, New York, 2013, xviii+449 pp.
%\bibitem{Hi6} H. Hida, \emph{Image of $\Lam$-adic Galois representation modulo $p$}, to appear in Invent. Math., 
%Available at http://www.math.ucla.edu/\textasciitilde hida/.
%\bibitem{Hi7} H. Hida, \emph{Transcendence of Hecke operators in the big Hecke algebra}, preprint 2013, 
%Available at http://www.math.ucla.edu/\textasciitilde hida/.
\bibitem{Hi8} H. Hida, \emph{Hecke fields of Hilbert modular analytic families}, 
Automorphic forms and related geometry: assessing the legacy of I. I. Piatetski-Shapiro, 97–137, Contemp. Math., 614, Amer. Math. Soc., Providence, RI, 2014.
%\bibitem{Ho} B. Howard, \emph{Special cohomology classes for modular Galois representations}, 
%J. Number Theory 117 (2006), no. 2, 406–438. 
%\bibitem{Ho1} B. Howard, \emph{Variation of Heegner points in Hida families}, Invent. Math. 167 (2007), no. 1, 91–128. 
%\bibitem{Hs2} M.-L. Hsieh, \emph{On the non-vanishing of Hecke L-values modulo $p$}, 
%Amer. J. Math. 134 (2012), no. 6, 1503–1539.
\bibitem{Hs1} M.-L. Hsieh, \emph{On the $\mu$-invariant of anticyclotomic $p$-adic L-functions for CM fields}, 
J. Reine Angew. Math. 688 (2014), 67–100. 
%\bibitem{Hs2} M.-L. Hsieh, \emph{On the non-vanishing of Hecke L-values modulo $p$}, 
%Amer. J. Math. 134 (2012), no. 6, 1503–1539.
%\bibitem{Hs3} M.-L. Hsieh, \emph{Special values of anticyclotomic Rankin-Selbeg L-functions}, submitted, preprint, 2012, 
%available at http://www.math.ntu.edu.tw/\textasciitilde mlhsieh/research.html .
\bibitem{kashio yoshida}
   {T. Kashio} and 
   {H. Yoshida},
   \emph{On $p$-adic absolute CM-periods. II},
   {Publ. Res. Inst. Math. Sci.},
   {45}
 %  ({2009})
{1},
   {187--225}.

%   review={\MR{2512781 (2011d:11143)}},
%   doi={10.2977/prims/1234361158}

%\bibitem{kashio yoshida}{article}{
%   author={Kashio, T.},
%   author={Yoshida, H.},
%   title={On $p$-adic absolute CM-periods. II},
%   journal={Publ. Res. Inst. Math. Sci.},
%   volume={45},
%   date={2009},
%   number={1},
%   pages={187--225},
%   issn={0034-5318},
%%%   review={\MR{2512781 (2011d:11143)}},
%%%   doi={10.2977/prims/1234361158},
%%}
\bibitem{kobayashi} S. Kobayashi, \emph{ The $p$-adic Gross-Zagier formula for elliptic curves at
   supersingular primes},
{Invent. Math.} {191} ({2013}),
{3},
{527--629}.
\bibitem{Ka} N. M. Katz, \emph{ $p$-adic L-functions for CM fields}, Invent. Math., $49(1978)$, no. $3$, 199-297. 
\bibitem{LZZ} Y. Liu, S. Zhang and W. Zhang, \emph{A $p$-adic Waldspurger formula},  Duke Math. J. 167 (2018), no. 4, 743-833.
\bibitem{MT}
{B. Mazur} and
{J. Tate},
\emph{Canonical height pairings via biextensions},
in {Arithmetic and geometry, Vol. I},
{Progr. Math. 35},
{Birkh\"auser Boston, Boston, MA},
{1983},
 195--237.
%   review={\MR{717595 (85j:14081)}},
%}

%\bibitem{MS} A. Murase and T. Sugano, Local theory of primitive theta functions, \emph{Compos. Math.}, 123 (2000), no. 3, 273-302.
\bibitem{N} J. Nekov\'{a}\v{r}, \emph{Kolyvagin's method for Chow groups of Kuga-Sato varieties}, 
Invent. Math. 107 (1992), no. 1, 99–125.
\bibitem{Ncm} J. Nekov\'{a}\v{r}, \emph{The Euler system method for CM points on Shimura curves}, In: $L$-functions and Galois representations, (Durham, July 2004), LMS Lecture Note Series 320, Cambridge Univ. Press, 2007, pp. 471--547.
\bibitem{Nht} J. Nekov\'{a}\v{r}, \emph{On $p$-adic height pairings}, S\'eminaire de Th\'eorie des Nombres, Paris, 1990-91, Progr. Math., vol. 108, Birkh\"auser Boston, Boston, MA, 1993, pp. 127--202
\bibitem{nek-selmer} J. Nekov\'{a}\v{r}, \emph{Selmer complexes}, Ast\'erisque  310 (2006).
%\bibitem{Pr} K. Prasanna, \emph{Integrality of a ratio of Petersson norms and level-lowering congruences}, Ann. of Math. (2) 163 (2006), no. 3, 901–967. 
%\bibitem{Ka1} N. M. Katz, \emph{Serre-Tate local moduli}, in Algebraic Surfaces (Orsay, 1976-78), Lecture Notes
%in Math. 868, Springer-Verlag, New York, 1981, pp. 138-202.
%\bibitem{Mo} B. Moonen, \emph{Linearity properties of Shimura varieties II}, Compositio Math. 114 (1998), 3-35.
%\bibitem{NS} J. Nekovar and N. Schappacher, \emph{On the asymptotic behaviour of Heegner points}, Turkish J. Math. 23 (1999), no. 4, 549–556.
%\bibitem{Sh1} G. Shimura, \emph{On analytic families of polarized abelian varieties and automorphic functions},  Ann. of Math. (2)  78  (1963), 149-192.
%\bibitem{Sh2} G. Shimura, \emph{On canonical models of arithmetic quotients of bounded symmetric domains, I},  Ann. of Math. (2)  91  (1970), 144-222. 
%\bibitem{R} K. Rubin, \emph{$p$-adic L-functions and rational points on elliptic curves with complex multiplication}, 
%Invent. Math. 107 (1992), no. 2, 323–350. 
\bibitem{PR}  B. Perrin-Riou, \emph{Points de Heegner et d\'eriv\'ees de fonctions $L$ $p$-adiques},  Invent. math. 89 (1987), 455-510.
\bibitem{PR2} B. Perrin-Riou, \emph{Fonctions $L$ $p$-adiques, th\'eorie d'Iwasawa et points de
   Heegner},
%   language={French, with English summary},
{Bull. Soc. Math. France}
{115}
({1987}),
no. {4},
 {399--456}.
%   issn={0037-9484},
%review={\MR{928018 (89d:11094)}},
%}
\bibitem{Ro} D. Rohrlich, \emph{On L-functions of elliptic curves and anticyclotomic towers}, 
Invent. Math. 75 (1984), no. 3, 383–408.
\bibitem{rubin-conj} K. Rubin, \emph{$p$-adic  variants of the Birch and
Swinnerton-Dyer Conjecture for Elliptic
Curves with Complex Multiplication}, Contemporary Mathematics 1611 (1994), 71--80.

\bibitem{rubin} K. Rubin, Appendix B in 
{A. Agboola},
B. Howard,
\emph{Anticyclotomic Iwasawa theory of CM elliptic curves},
{Ann. Inst. Fourier (Grenoble)} {56}
(2006), {4},
{1001--1048}.
%   review={\MR{2266884 (2009b:11098)}},
%}

%\bibitem{Sa} M. Sabitova, \emph{Twisted root numbers and ranks of abelian varieties}, 
% J. Comb. Number Theory 5 (2013), no. 1, 25–30. 
%\bibitem{S} H. Saito, \emph{On Tunnell's formula for characters of $\GL(2)$}, Compositio Math. 85 (1993), no. 1, 99–108. 
\bibitem{Sc1} P. Schneider, \emph{$p$-adic height pairings. I}, 
 Invent. Math. 69 (1982), no. 3, 401–409.
\bibitem{Sc2} P. Schneider, \emph{$p$-adic height pairings. II}, 
 Invent. Math. 79 (1985), no. 2, 329–374.
%\bibitem{Sc} A. Scholl, \emph{Motives for modular forms}, Invent. Math. 100 (1990), no. 2, 419-430.
\bibitem{Sh1} G. Shimura, \emph{On some arithmetic properties of modular forms of one and several variables}, 
 Ann. of Math. (2) 102 (1975), no. 3, 491–515.
\bibitem{Sh2} G. Shimura, \emph{Automorphic forms and the periods of abelian varieties}, 
J. Math. Soc. Japan 31 (1979), no. 3, 561–592.
\bibitem{Sh3} G. Shimura, \emph{Abelian varieties with complex multiplication and modular functions}, Princeton Mathematical Series, 46. Princeton University Press, Princeton, NJ, 1998.
\bibitem{Sh4} G. Shimura, \emph{Algebraic relations between critical values of zeta functions and inner products}, 
Amer. J. Math. 105 (1983), no. 1, 253--285. 
\bibitem{Sk} C. Skinner, \emph{A converse to a theorem of Gross, Zagier and Kolyvagin}, preprint,  
 arXiv:1405.7294. 
\bibitem{Ta} J. Tate, \emph{Number theoretic background}, 
Automorphic forms, representations and L-functions (Proc. Sympos. Pure Math., Oregon State Univ., Corvallis, Ore., 1977), Part 2, pp. 3–26, Proc. Sympos. Pure Math., XXXIII, Amer. Math. Soc., Providence, R.I., 1979.
%\bibitem{T} J. Tunnell, \emph{Local $\epsilon$-factors and characters of $\GL(2)$}, Amer. J. Math. 105 (1983), no. 6, 1277–1307.
%\bibitem{W} T. Wedhorn, \emph{Ordinariness in good reductions of Shimura varieties of PEL type}, Ann. Scient. $\acute{E}$cole Norm. Sup. (4) 32 (1999), 575-618.
\bibitem{V} V. Vatsal, \emph{Uniform distribution of Heegner points}, Invent. Math. 148, 1-48 (2002). 
\bibitem{V1} V. Vatsal, \emph{Special values of anticyclotomic L-functions}, Duke Math J., 116, 219-261 (2003).
%\bibitem{V2} V. Vatsal, \emph{Special values of L-functions modulo $p$}, International Congress of Mathematicians. Vol. II, 501–514, Eur. Math. Soc., Zürich, 2006.
%\bibitem{W} X. Wan, \emph{Heegner point Kolyvagin system and Iwasawa main conjecture}, preprint, 2014, 
%available at http://www.math.columbia.edu/\textasciitilde xw2295.
\bibitem{YZ} X. Yuan and S. Zhang, \emph{On the averaged Colmez conjecture}, Ann. of Math. (2) 187 (2018), no. 2, 533-638.

\bibitem{YZZ} X. Yuan, S. Zhang and W. Zhang, \emph{The Gross-Zagier formula on Shimura curves}, Annals of Mathematics Studies, vol 184. (2013) viii+272 pages. 

\bibitem{Zh} W. Zhang, \emph{Selmer groups and the indivisibility of Heegner points},  Cambridge Journal of Math., Vol. 2 (2014), No. 2, 191--253.

\end{document}